\def\G{\Gamma}
\newtheorem{defi}{Definition}[section]
\newtheorem{prop}[defi]{Proposition}
\newtheorem{lem}[defi]{Lemma}
\newtheorem{rem}[defi]{Remark}
\newtheorem{cor}[defi]{Corollary}
\newtheorem{thm}[defi]{Theorem}
\newtheorem{conj}{Conjecture}
\theoremstyle{definition}
\newtheorem{ex}[defi]{Example}
\numberwithin{equation}{section}
\begin{document}

\title{The seating couple problem in even case}

\author{Mariusz Meszka}
\address{Faculty of Applied Mathematics,
AGH University of Krak\'ow,
al. Mickiewicza 30,
30-059 Krak\'ow, Poland}
\email{meszka@agh.edu.pl}

\author{Anita Pasotti}
\address{DICATAM - Sez. Matematica, Universit\`a degli Studi di Brescia,
Via Branze 43, I-25123 Brescia, Italy}
\email{anita.pasotti@unibs.it}

\author{Marco Antonio Pellegrini}
\address{Dipartimento di Matematica e Fisica, Universit\`a Cattolica del Sacro Cuore,
Via della Garzetta 48, I-25133 Brescia, Italy}
\email{marcoantonio.pellegrini@unicatt.it}

\subjclass[2010]{05C70, 05A17}
\keywords{seating couple problem, matching, Skolem sequence}

\begin{abstract}
In this paper we consider the seating couple problem with an even number of seats, which, using graph theory terminology, can be stated as follows.
%Given an arbitrary list of $n$ nonzero elements $d_1,\ldots,d_n$ in the finite ring $\Z_{2n}$, 
%is it always possible to find a partition of $\Z_{2n}$ into pairs
%with differences $d_1,\ldots,d_n$?
%Up to now a (non-constructive) solution is known only when $\gcd(d_i,2n)=1$ for every $i$.
Given a positive even integer $v=2n$ and a list $L$ containing $n$ positive integers not exceeding $n$,
is it always possible to find a perfect matching of $K_v$ whose list of edge-lengths is $L$?
Up to now a (non-constructive) solution is known only when all the edge-lengths are coprime with $v$.
In this paper we firstly present some necessary conditions for the existence of a solution.
Then, we give a complete constructive solution when the list consists of one or two distinct elements,
and when the list consists of consecutive integers $1,2,\ldots,x$, each one appearing with the same
multiplicity. Finally, we propose a conjecture and some open problems.
\end{abstract}

 \maketitle

\section{Introduction}

In \cite{PM} the authors considered the following problem proposed by Roland Bacher.
A king invites $n$ couples for dinner at his round table containing $2n+1$ seats, the king taking the last unoccupied chair.
The king has to address the following task: given an arbitrary set of $n$ couples, no one married for more than $n$ years,
is it always possible to seat all $n$ couples at his table according to the royal protocol stipulating that, if the two spouses of a couple are in their $i$-th year of marriage,
they have to occupy two chairs at \emph{circular distance} $i$ (where circular distance $i$ means that the two chairs are separated by exactly $i-1$ chairs)?
Using a mathematical language, the problem can be restated as follows:
given an arbitrary list of $n$ natural numbers $d_1,\ldots,d_n$ in $\{ 1,\ldots,n\}$, is it always possible to find an involution of
$2n+1$ circularly ordered points having a unique fixed point and consisting of $n$ disjoint transpositions exchanging respectively
two points at circular distance $d_1,\ldots,d_n$?

We point out that the same problem can be also stated using graph terminology (as already done in \cite{R}).
We prefer this choice because, in our proofs, we use tools from graph theory. To this purpose we introduce some definitions and notation,
see \cite{W} for a very good reference.
In this paper $K_v$ denotes the complete graph on $\{0,1,\ldots,v-1\}$ for any positive integer $v$.
The length $\ell(u, w)$ of an edge $\{u, w\}$ of $K_v$ is defined as
$$\ell(u, w) = \min(|u-w|, v-|u-w|).$$
If $\G$ is a subgraph of $K_v$, then the list of edge-lengths of $\G$ is the list $\ell(\G)$ of
the lengths (taken with their respective multiplicities) of all the edges of $\G$.
For convenience, if a list $L$ consists of $a_1$ $1$'s, $a_2$ $2$'s, $\ldots,  a_t$ $t$'s   one writes
$L=\{1^{a_1},2^{a_2},\ldots, t^{a_t} \}$, whose underlying set is the set of the elements $\{i : a_i>0\}$.

Given a graph $\G$ we denote by $V(\G)$ and $E(\G)$ its vertex-set and its edge-set, respectively.
If $\G$ has an odd (even, respectively) number of vertices, say $2n+1$ ($2n$, resp.), a \emph{near} $1$-\emph{factor}, resp. a $1$-\emph{factor}, $F$ of $\G$
is a set of $n$ pairwise disjoint edges of $\G$. In the even case, one also speaks of a \emph{perfect matching} of $\G$ and $V(F)=V(\G)$ holds.

Bacher proposed the following, see \cite{PM}.
\begin{conj}\label{Bacher}
There exists a solution to the king's problem if all distances, namely all edge-lengths, are invertible elements modulo the total number of seats, namely the number of the vertices of the complete graph.
\end{conj}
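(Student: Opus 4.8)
The plan is to prove this by the polynomial method, after a reformulation. Identify $V(K_v)$ with $\Z_v$, $v=2n+1$, so that an edge of length $d\le n$ is a pair $\{x,x+d\}$ with $x\in\Z_v$ (the hypothesis $d\le n$ guarantees $\ell(\{x,x+d\})=d$). Writing the given list as $L=\{d_1,\dots,d_n\}$, a \nf\ $F$ with $\ell(F)=L$ is exactly a choice of $x_1,\dots,x_n\in\Z_v$ for which the $2n$ elements $x_1,x_1+d_1,\dots,x_n,x_n+d_n$ are pairwise distinct: the $n$ edges $\{x_i,x_i+d_i\}$ then cover all vertices but one. Consider the polynomial
$$P(x_1,\dots,x_n)=\prod_{1\le i<j\le n}(x_i-x_j)(x_i-x_j-d_j)(x_i-x_j+d_i)(x_i-x_j+d_i-d_j)\in\Z[x_1,\dots,x_n].$$
Since $d_i\ne 0$, the inequality $x_i\ne x_i+d_i$ holds automatically, so $P$ being nonzero at a point of $\Z_v^{\,n}$ says precisely that the $2n$ prescribed points are distinct; it thus suffices to produce such a point.

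For $v$ prime this follows from Alon's Combinatorial Nullstellensatz. Here $\deg P=4\binom n2=2n(n-1)$, and since each linear factor has leading form $x_i-x_j$, the top-degree part of $P$ is $\bigl(\prod_{i<j}(x_i-x_j)\bigr)^4=V^4$, $V$ being the Vandermonde determinant. Hence the coefficient of the balanced monomial $x_1^{2(n-1)}\cdots x_n^{2(n-1)}$ in $P$ equals its coefficient in $V^4$; rewriting $V^4=\bigl(\prod_j x_j^{2(n-1)}\bigr)\prod_{i\ne j}(1-x_i/x_j)^2$ shows this coefficient to be the constant term of $\prod_{i\ne j}(1-x_i/x_j)^2$, which by Dyson's identity equals $(2n)!/2^n$. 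As $v=2n+1$ is an odd prime exceeding $2n$, it divides neither $(2n)!$ nor $2^n$, so $(2n)!/2^n$ is a unit of $\Z_v$; since $\sum_i 2(n-1)=\deg P$ and $|\Z_v|=2n+1>2(n-1)$, the Nullstellensatz yields the required point. (For prime $v$ the coprimality hypothesis is automatically satisfied, consistent with the statement.)

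The composite case $v=2n+1$ is where the assumption that every $d_i$ is invertible genuinely matters, and I expect it to be the main obstacle. It cannot be reduced to a prime factor $p\mid v$: since $v$ is odd and composite, $3p\le v$, hence $p<n$, so any point of $\Z_p^{\,n}$ has two equal coordinates and $P$ vanishes identically on $\Z_p^{\,n}$. Moreover without coprimality the statement is false — for $v=9$ the list $\{3,3,3,3\}$ has no \nf, the edges of length $3$ lying on three vertex-disjoint triangles. The route I would pursue is a version of the Combinatorial Nullstellensatz valid over $\Z_v$ itself (or over a maximal prime power $p^a\mid v$) under the stronger requirement that the relevant top coefficient be a \emph{unit} of $\Z_v$ rather than merely nonzero; the grid $\Z_v$ still has the admissible size $2n+1>2(n-1)$, but $(2n)!/2^n$ need not be coprime to $v$ — already $v=9$ fails — so one must instead locate a nearly balanced monomial, with all exponents at most $v-1$ and with coefficient in $V^4$ (an explicit Dyson-type product of factorials) a unit mod $v$, the invertibility of the $d_i$ being what is used to make the argument close. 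Proving that such a monomial always exists — or, alternatively, producing a recursive construction assembling a \nf\ for composite $v$ out of the prime-$v$ case — is the crux; this is presumably why only a non-constructive proof of the coprime case is currently known, and why both the non-coprime odd case and its even-order analogue remain open.
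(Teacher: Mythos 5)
This statement is Conjecture \ref{Bacher}; the paper records it as an \emph{open} conjecture and offers no proof of it, so the only question is whether your argument actually closes it. It does not. What you prove correctly is the prime case $v=2n+1=p$, which is already known (it is the theorem of Preissmann--Mischler \cite{PM} quoted in the introduction, with the Nullstellensatz proof you reproduce being exactly that of Karasev--Petrov \cite{KP}). That part of your write-up is sound: the reformulation is right (since each $d_i\le n$, the edge $\{x_i,x_i+d_i\}$ has length $d_i$, the conditions $x_i\ne x_i+d_i$ are automatic, and nonvanishing of your $P$ encodes distinctness of the $2n$ endpoints), the leading form of $P$ is $V^4$, and Dyson's identity gives the balanced coefficient $(2n)!/2^n$, a unit mod a prime $v>2n$.

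The gap is the one you flag yourself: the composite case, which is the entire content of the conjecture, since for prime $v$ the coprimality hypothesis is vacuous. Your sketch for composite $v$ is a research plan, not an argument. Concretely: any Combinatorial Nullstellensatz application to your $P$ over grids of size $2n+1$ must use a monomial of total degree $\deg P$, and every such coefficient is read off from the leading form $V^4$, which does not involve the $d_i$ at all; so the proposed route of ``locating a nearly balanced monomial with unit coefficient in $V^4$'' cannot, by itself, exploit the invertibility of the $d_i$, even though that hypothesis is essential (your own example $v=9$, $L=\{3,3,3,3\}$ shows the conclusion fails without it, consistent with the necessary condition \eqref{PP}). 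Any genuine proof must bring the $d_i$-dependent lower-order terms of $P$ (or a different polynomial, or different grids) into play, and you give no mechanism for doing so, nor for the alternative ``recursive assembly from the prime case,'' which you correctly observe is obstructed because a prime factor $p$ of $v$ satisfies $p<n$. So the conjecture remains unproved; what you have is a correct account of the known prime case together with an accurate diagnosis of why the general case is hard.
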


This conjecture has been generalized  in \cite{PP} as follows.
\begin{conj}\label{MPP}
Let $v=2n+1$ be an odd integer and  $L$ be a list of $n$ positive integers not
exceeding $n$.
Then there exists a near $1$-factor $F$ of $K_v$ such that $\ell(F)=L$ if and only if the following condition holds:
\begin{equation}\label{PP}
\left. \begin{array}{c}
\textrm{for any divisor $d$ of $v$, the number of multiples of $d$} \\
\textrm{appearing in $L$ does not exceed $\frac{v-d}{2}$.}
\end{array}\right.
\end{equation}
\end{conj}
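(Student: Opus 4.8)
Since the statement is a biconditional, the plan is to separate its two directions, which have very different status: necessity of \eqref{PP} admits a short self-contained proof, while its sufficiency is precisely the open content of the conjecture.

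For necessity I would argue as follows. Suppose a near $1$-factor $F$ of $K_v$ with $\ell(F)=L$ exists and fix a divisor $d$ of $v$. Partition $V(K_v)=\{0,1,\ldots,v-1\}$ into the $d$ residue classes $R_0,\ldots,R_{d-1}$ modulo $d$, each of size $v/d$. The key observation is that an edge $\{u,w\}$ satisfies $d\mid\ell(u,w)$ exactly when $u\equiv w\pmod d$: since $d\mid v$, one has $\ell(u,w)\in\{\,|u-w|,\,v-|u-w|\,\}$ and both of these integers are divisible by $d$ if and only if $d\mid|u-w|$, i.e.\ when $u$ and $w$ lie in the same class $R_j$. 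Hence every edge of $F$ whose length is a multiple of $d$ is contained in a single $R_j$. Since $v$ is odd, each $|R_j|=v/d$ is odd, so the edges of $F$ lying inside a fixed $R_j$ form a matching among its $v/d$ vertices and there are at most $\frac{v/d-1}{2}$ of them. Summing over $j=0,\ldots,d-1$, the number of edges of $F$ of length a multiple of $d$ is at most $d\cdot\frac{v/d-1}{2}=\frac{v-d}{2}$; since this number equals the number of multiples of $d$ occurring in $L=\ell(F)$, condition \eqref{PP} follows.

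For sufficiency --- the hard direction --- I have no complete strategy, since this is exactly the content of the conjecture; to date it is established only when every entry of $L$ is coprime to $v$, which is Conjecture~\ref{Bacher} (and even there the known argument is non-constructive). A natural attempt is induction on $v$. When $v$ is prime, condition \eqref{PP} is vacuous, every entry of $L$ is coprime to $v$, and one is in the settled case. When $v$ is composite, one would pick a prime divisor $p$ of $v$ and use that each residue class $R_j$ modulo $p$ spans a complete graph isomorphic to $K_{v/p}$ in which a length-$\lambda$ edge of $K_{v/p}$ corresponds to a length-$p\lambda$ edge of $K_v$ (and $v/p$ is again odd); one would then realise the multiples of $p$ in $L$ by edges lying inside these pieces --- reducing, after dividing lengths by $p$, to smaller instances governed by the inductive hypothesis on $K_{v/p}$ --- and realise the lengths coprime to $p$, in particular those coprime to $v$, by edges crossing between the pieces. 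The main obstacle, and the reason the conjecture remains open, is that \eqref{PP} for $v$ does not decompose cleanly into the corresponding conditions for the $p$ subproblems: those inequalities are tight, so an unfavourable distribution of the multiples of $p$ among the pieces can make one of them infeasible, and there is no uniform recipe producing a good distribution that is at the same time compatible with a valid placement of the remaining, coprime, lengths. That is why one instead proves the conjecture for restricted families of lists --- those with one or two distinct values, or $\{1,2,\ldots,x\}$ with all multiplicities equal --- where this bookkeeping can be done explicitly, typically by means of Skolem-type sequences.
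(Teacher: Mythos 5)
Your treatment matches the paper's: the statement is a conjecture whose entire mathematical content is the sufficiency of \eqref{PP}, and your necessity argument --- edges of length divisible by $d$ must join vertices in the same residue class modulo $d$, each class has odd size $v/d$, so a matching contributes at most $\frac{v/d-1}{2}$ such edges per class and $d\cdot\frac{v/d-1}{2}=\frac{v-d}{2}$ in total --- is exactly the proof of Proposition~1 of \cite{PP} that the paper invokes. One small correction to your surrounding commentary: for odd $v$ the ``all entries coprime to $v$'' case is itself Conjecture~\ref{Bacher} and is settled only when $v$ is prime (the general coprime result, Theorem~\ref{thm:KSC}, concerns even $v$), while the families for which Conjecture~\ref{MPP} is actually known are those of Theorem~\ref{thm:MPP}; this does not affect the validity of what you prove, since you establish nothing in the sufficiency direction.
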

With the aid of a computer, the authors of \cite{PP} verified its validity for all odd integers $v\leq 23$ and
pointed out that in the statement, the actual conjecture is the sufficiency. In
fact, they proved that condition \eqref{PP} is necessary. Furthermore, their main result is:

\begin{thm} \label{thm:MPP}
Let $L$ be a list of $n$ positive integers not exceeding $n$ and let $v = 2n + 1$.
Then, Conjecture \ref{MPP} holds whenever the underlying set $S$ of $L$ satisfies one of the following
conditions:
\begin{itemize}
\item[(1)] $|S|=1, 2$ or $n$;
\item[(2)] $S=\{1, 2, t\}$, where $t \geq 3$ is not coprime with $v$.
\end{itemize}
\end{thm}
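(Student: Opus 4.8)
Since condition \eqref{PP} is necessary (this is proved in \cite{PP}), it suffices to prove its sufficiency: assuming \eqref{PP}, we must exhibit a near $1$-factor $F$ of $K_v$ with $\ell(F)=L$. I would do this constructively on the vertex set $\Z_v$, handling the four configurations of the statement separately, and I would begin with the two easy ones. If $|S|=1$, say $L=\{d^n\}$, put $e=\gcd(d,v)$: every entry of $L$ is a multiple of the divisor $e$ of $v$, so \eqref{PP} yields $n\le\frac{v-e}{2}$, which forces $e=1$; but then the edges of $K_v$ of length $d$ form a single $v$-cycle, and alternate edges of that cycle give the desired near $1$-factor. If $|S|=n$, then $L=\{1,2,\dots,n\}$ and \eqref{PP} holds automatically: the pairs $\{i,-i\}$, $i=1,\dots,n$, are disjoint, cover $\Z_v\setminus\{0\}$, and have lengths $\ell(i,-i)=\min(2i,v-2i)$, and as $i$ ranges over $1,\dots,n$ these run through $2,4,\dots$ and then through $\dots,5,3,1$, i.e.\ through all of $\{1,\dots,n\}$.

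Now suppose $|S|=2$, say $L=\{a^p,b^q\}$ with $p+q=n$. I would first cut down the parameters: if $\gcd(a,v)=1$ then $x\mapsto a^{-1}x$ is a bijection of $\Z_v$ preserving near $1$-factors which turns the length $a$ into $1$ and the length $b$ into the length $c$ attached to the difference $a^{-1}b$, and it preserves \eqref{PP} because all the divisors that matter divide $v$; symmetrically if $\gcd(b,v)=1$. From \eqref{PP} one reads off that $\gcd(a,b,v)=1$ and that, after normalising, the only surviving numerical constraints are upper bounds on $p$ and on $q$ coming from the largest divisor of $v$ dividing $a$, resp.\ $b$. Three configurations remain: (A) $L=\{1^p,c^q\}$ with $\gcd(c,v)=1$; (B) $L=\{1^p,c^q\}$ with $1<\gcd(c,v)<v$; (C) $L=\{a^p,b^q\}$ with $\gcd(a,v)$ and $\gcd(b,v)$ both $>1$, which admits no further normalisation. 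In (A) and (B), when $q\le c$ one places the length-$c$ edges as the block $\{j,j+c\}$, $0\le j\le q-1$; its complement in $\Z_v$ is a disjoint union of two intervals of total size $2p+1$, one of even and one of odd length, and one fills the even interval and all but one vertex of the odd interval with length-$1$ edges, leaving a single vertex uncovered. The remaining ranges of $(p,q)$ are reached either by interchanging the roles of the two lengths (or, in (A), renormalising by $c^{-1}$) or by variants of the same block idea; configuration (C) needs a genuinely combined placement that respects simultaneously the cycle structures of the length-$a$ and of the length-$b$ edges, using that $\gcd(a,v)\gcd(b,v)$ divides $v$.

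For the case $S=\{1,2,t\}$ with $t\ge 3$ not coprime with $v$, write $g=\gcd(t,v)>1$ and let $a_1,a_2,a_t>0$ be the multiplicities of $1,2,t$. Here \eqref{PP} reduces to the single inequality $a_t\le\frac{v-g}{2}$, which is precisely what is needed for the $a_t$ edges of length $t$ to be laid down disjointly inside the $g$ cycles of length $v/g$ that they form. I would pick such a placement carefully — prescribing how many length-$t$ edges fall in each cycle, and where — so that the still-uncovered vertices break into a small collection of short arcs of $\Z_v$; these are then covered by edges of lengths $1$ and $2$, one vertex being designated as uncovered. Because both lengths $1$ and $2$ are available, the resulting construction has enough parity slack to realise \emph{every} admissible pair $(a_1,a_2)$ with $a_1+a_2=n-a_t$.

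The difficulty in the last two cases is not the idea but the bookkeeping: the blocks and arcs have to be made to fit together — disjoint, with exactly one vertex uncovered — uniformly over all parameter values permitted by \eqref{PP}. The hard part will be the ``tight'' sub-cases, where \eqref{PP} is (nearly) an equality, so that very few triples $(a_1,a_2,a_t)$, resp.\ pairs $(p,q)$, are feasible and the construction has almost no freedom; these must be isolated and treated individually, and making configuration (C) of the $|S|=2$ case work for all feasible $(p,q)$ at once is the single most delicate point.
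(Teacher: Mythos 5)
First, a point of reference: this paper does not prove Theorem \ref{thm:MPP} at all --- it is quoted as the main result of \cite{PP}, so there is no in-paper proof to compare against; your proposal has to stand on its own as a proof of the cited theorem.

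Your reduction to sufficiency is correct (necessity of \eqref{PP} is indeed proved in \cite{PP}), and your two easy cases are complete and correct: for $|S|=1$ the condition \eqref{PP} applied to $e=\gcd(d,v)$ forces $e=1$, and alternate edges of the $v$-cycle of length-$d$ edges give the near $1$-factor; for $|S|=n$ the pairs $\{i,v-i\}$ realise $\{1,2,\dots,n\}$ exactly as you say. The normalisation observations for $|S|=2$ (multiplication by a unit preserves near $1$-factors and condition \eqref{PP}; $\gcd(a,b,v)=1$ is forced; the surviving constraints are $p\le\frac{v-\gcd(a,v)}{2}$ and $q\le\frac{v-\gcd(b,v)}{2}$) are also correct.

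The genuine gap is that the constructions for the cases that carry the actual content of the theorem --- $|S|=2$ in configurations (B) and (C), the range $q>c$ in configurations (A) and (B), and the whole of $S=\{1,2,t\}$ --- are never produced. Your block $\{j,j+c\}$, $0\le j\le q-1$, consists of pairwise disjoint edges only when $q\le c$, and for $q>c$ you offer only ``interchanging the roles of the two lengths or variants of the same block idea''; but in configuration (B) the length $c$ is not invertible modulo $v$, so no renormalisation swaps the roles, and when $q$ approaches the extremal value $\frac{v-\gcd(c,v)}{2}$ the length-$c$ edges must be distributed as a near-perfect matching across all $\gcd(c,v)$ cycles simultaneously, after which the uncovered vertices must still decompose into intervals of the right parities for the length-$1$ edges --- this is exactly the delicate interaction your sketch defers. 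Configuration (C), which you yourself identify as ``the single most delicate point,'' is not addressed beyond the (correct) remark that $\gcd(a,v)\gcd(b,v)$ divides $v$. Likewise, for $S=\{1,2,t\}$ the claim that lengths $1$ and $2$ give ``enough parity slack'' to realise every admissible $(a_1,a_2)$ is plausible but unverified: an arc of odd length cannot be tiled by length-$2$ edges alone, and when $a_1$ is very small relative to the number and parities of the arcs left uncovered by the length-$t$ edges the placement can fail, which is precisely why \cite{PP} needs the supplementary hypothesis $a+b\ge\lfloor\frac{t-1}{2}\rfloor$ in the companion result quoted after Theorem \ref{thm:MPP}. In short, what you have is a correct outline of a strategy together with a correct treatment of the two trivial cases; the theorem itself lives in the ``bookkeeping'' you postpone, and that part is missing.
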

Furthermore, Conjecture \ref{MPP} holds if $L=\{1^a, 2^b, t^c\}$ with $a + b \geq \lfloor\frac{t-1}{2}\rfloor$.
Finally, if $S=\{1, 2, t\}$ and $\gcd(v, t)= 1$, they also presented some partial results for the cases
not covered by Theorem \ref{thm:MPP}. In particular, they gave a complete solution for $t \leq 11$.

The case $n$ odd prime has been considered in \cite{PM}, in which the authors proved the existence of
a complete solution by a nonconstructive proof.

\begin{thm}
There exists a solution to the king's problem with $p$ seats for every odd prime~$p$.
\end{thm}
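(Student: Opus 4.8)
The plan is to move everything into the cyclic group $\Z_p$ and then apply the Combinatorial Nullstellensatz. Write $p=2n+1$; we are given arbitrary $d_1,\dots,d_n\in\{1,\dots,n\}$ and must realize them as the edge-length list of a \nf\ of $K_p$. Identify the $p$ seats with $\Z_p$. For $1\le d\le n$ the length-$d$ edges of $K_p$ are exactly the pairs $\{u,u+d\}$ with $u\in\Z_p$, there being no sign ambiguity because $\{x,x-d\}=\{x-d,(x-d)+d\}$. Hence such a \nf\ is nothing but a choice of $u_1,\dots,u_n\in\Z_p$ for which the $2n$ elements
\[
u_1,\dots,u_n,\qquad u_1+d_1,\dots,u_n+d_n
\]
are pairwise distinct: the $n$ disjoint edges $\{u_i,u_i+d_i\}$ then cover all vertices but one (the king's chair), and each has length $\min(d_i,p-d_i)=d_i$ since $d_i\le n<p/2$. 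Note that for $p$ prime every $d_i$ is automatically invertible and condition \eqref{PP} is automatically satisfied, so a positive answer here also settles Conjectures~\ref{Bacher} and~\ref{MPP} in the case $v$ prime.

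I would now run the polynomial method over the field $\mathbb{F}_p\cong\Z_p$: it is enough to find a point at which
\[
P(u_1,\dots,u_n)=\prod_{1\le i<j\le n}(u_i-u_j)\;\prod_{1\le i<j\le n}\bigl(u_i-u_j+d_i-d_j\bigr)\;\prod_{\substack{1\le i,j\le n\\ i\ne j}}\bigl(u_i-u_j-d_j\bigr)
\]
does not vanish, the three products encoding, respectively, the distinctness of the $u_i$, of the $u_i+d_i$, and of $u_i$ against $u_j+d_j$ with $i\ne j$ (the comparisons of $u_i$ with $u_i+d_i$ being automatic since $d_i\ne 0$). Here $\deg P=2\binom n2+n(n-1)=2n(n-1)$. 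Hence it suffices, by the Combinatorial Nullstellensatz with each $u_i$ ranging over all of $\Z_p$ (a set of $2n+1>2n-1$ elements), to check that the coefficient of $u_1^{2n-2}\cdots u_n^{2n-2}$, a monomial of degree $\deg P$, is a nonzero element of $\mathbb{F}_p$.

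Since that monomial has maximal degree, its coefficient in $P$ equals its coefficient in the leading homogeneous part of $P$, which is
\[
\Bigl(\prod_{i<j}(u_i-u_j)\Bigr)\Bigl(\prod_{i<j}(u_i-u_j)\Bigr)\prod_{i\ne j}(u_i-u_j)=(-1)^{\binom n2}\Bigl(\prod_{i<j}(u_i-u_j)\Bigr)^{4}.
\]
So everything reduces to the coefficient of $\prod_i u_i^{2(n-1)}$ in the fourth power of the Vandermonde polynomial --- a classical constant-term evaluation, namely the $a_1=\dots=a_n=2$ instance of Dyson's conjecture (proved by Gunson, by Wilson and by Good), whose value is $(2n)!/2^{\,n}$ up to an explicit sign. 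Finally, modulo $p=2n+1$ we have $(2n)!=(p-1)!\equiv-1$ by Wilson's theorem while $2^{\,n}$ is a unit, so this coefficient is indeed nonzero in $\mathbb{F}_p$; the Combinatorial Nullstellensatz then produces the desired $u_1,\dots,u_n$ and the proof is complete.

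The only non-mechanical step is this coefficient computation, and once one recognizes the Combinatorial Nullstellensatz coefficient as (up to sign) a Dyson constant term the argument is very short. The hard part, if one wanted to avoid citing Dyson's conjecture, would be to prove the constant-term identity for $\bigl(\prod_{i<j}(u_i-u_j)\bigr)^4$ directly --- say by Good's Lagrange-interpolation trick, or by induction on $n$ --- but this too is classical.
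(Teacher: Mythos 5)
Your proof is correct, and it is essentially the Karasev--Petrov argument that the paper already points to: the theorem is imported from \cite{PM} without proof, and the paper explicitly notes that ``an alternative (not constructive) proof can be found in \cite{KP}, based on the famous Combinatorial Nullstellensatz'' --- which is exactly what you have reconstructed. Your reduction (near $1$-factors with prescribed lengths $\leftrightarrow$ points $(u_1,\dots,u_n)\in\Z_p^n$ with $u_1,\dots,u_n,u_1+d_1,\dots,u_n+d_n$ pairwise distinct), the choice of polynomial, the identification of the top-degree coefficient with the $a_1=\dots=a_n=2$ Dyson constant term $(2n)!/2^n$, and the final appeal to Wilson's theorem are all sound; the only cosmetic slips are that the Nullstellensatz hypothesis you need is $|S_i|>2n-2$ (your $2n+1>2n-1$ is true but not the relevant inequality) and that the sign in front of $(2n)!/2^n$ is in fact $+1$, neither of which affects the conclusion. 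The original proof of Preissmann and Mischler in \cite{PM} is a different (also nonconstructive) counting argument tied to their characterization of primes, so if one were forced to choose ``the paper's route'' it would be that one; but since the paper itself endorses the Nullstellensatz proof as an equivalent alternative, your write-up matches a route the authors explicitly sanction, with the single external input being the classical equal-parameter case of Dyson's conjecture.
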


An alternative (not constructive) proof can be found in \cite{KP}, based on the famous Combinatorial Nullstellensatz \cite{A}.
Partial constructive results in the prime case can be found in \cite{R}.
We underline that Marco Buratti \cite{BM} proposed a strongly related conjecture, for details see \cite{OPPS} and the references therein.
Clearly, one can consider the same problem also in the even case.
In fact, in \cite{KSC} the authors proved  the following:

\begin{thm}\label{thm:KSC}
Let $v=2n$ be a positive integer and let $L$ be a list of $n$ integers coprime with $2n$ not exceeding $n$.
Then there exists a perfect matching $F$ of $K_v$ such that $\ell(F)=L$.
\end{thm}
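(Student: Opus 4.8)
The plan is to turn the problem into a statement about permutations of $\Z_n$ and then to exploit the coprimality hypothesis to satisfy a single congruence. Since every entry of $L$ is coprime to $v=2n$, it is odd, and (for $n\ge 2$; the case $n=1$ is trivial) it is at most $n-1$. Hence every edge of a perfect matching $F$ with $\ell(F)=L$ joins an even vertex to an odd one. Relabelling the even vertices $0,2,\dots,2n-2$ and the odd vertices $1,3,\dots,2n-1$ each by $0,1,\dots,n-1$, such a matching becomes a bijection $\bar\pi\colon\Z_n\to\Z_n$, the edge attached to $x$ being $\{2x,\,2\bar\pi(x)+1\}$; since $2\bar\pi(x)+1-2x\equiv 2s+1\pmod{2n}$ with $s:=(\bar\pi(x)-x)\bmod n$, this edge has length $g(s):=\min(2s+1,\,2n-2s-1)$. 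For each odd $\ell\in\{1,\dots,n-1\}$ there are exactly two residues $s$ with $g(s)=\ell$, namely $s_\ell^{+}=\tfrac{\ell-1}{2}$ and $s_\ell^{-}=n-\tfrac{\ell+1}{2}$, and as $\ell$ varies these residues exhaust $\Z_n$, with the sole exception, present only for $n$ odd, of the residue $\tfrac{n-1}{2}$, which corresponds to the forbidden length $n$. Writing $L=\{\dots,\ell^{a_\ell},\dots\}$, the task is thus to find a bijection $\bar\pi$ of $\Z_n$ whose displacement multiset $D=\{(\bar\pi(x)-x)\bmod n : x\in\Z_n\}$ satisfies $D(s_\ell^{+})+D(s_\ell^{-})=a_\ell$ for every $\ell$.

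The second ingredient, which I expect to be the technical heart of the argument, is the following fact about cyclic groups: a multiset $D$ of $m$ elements of $\Z_m$ arises as the displacement multiset of some permutation of $\Z_m$ if and only if $\sum_{d\in D}d\equiv 0\pmod m$. Necessity is immediate since $\sum_x(\bar\pi(x)-x)=0$; sufficiency I would prove by induction on $m$, using a translation when all entries of $D$ coincide and otherwise peeling off an appropriate cycle, the delicate point being to keep the residual instance inside the same cyclic group rather than passing to a proper subgroup. Granting this, it remains to split each $a_\ell=p_\ell+q_\ell$, set $D(s_\ell^{+})=p_\ell$ and $D(s_\ell^{-})=q_\ell$, and choose the splittings so that $\sum_s sD(s)\equiv 0\pmod n$.

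This is where coprimality is used decisively. Using $s_\ell^{+}+s_\ell^{-}=n-1$ and $2s_\ell^{+}+1=\ell$, one computes $\sum_s sD(s)\equiv\bigl(\sum_\ell p_\ell\ell\bigr)-T\pmod n$, where $T=\tfrac12\bigl(\sum_i\ell_i+n\bigr)$ is an integer because $\sum_i\ell_i\equiv n\pmod 2$. As the $p_\ell$ range over $0\le p_\ell\le a_\ell$, the quantity $\sum_\ell p_\ell\ell$ runs exactly over the subset sums of the multiset $L$. Now every element of $L$ is coprime to $2n$, hence a unit of $\Z_n$; and the subset sums of any $n-1$ units of $\Z_n$ cover all of $\Z_n$, because on adjoining the units one at a time the set of partial subset sums strictly increases (a unit-translate of a proper nonempty subset of $\Z_n$ cannot coincide with it) until it equals $\Z_n$, which must happen within $n-1$ steps. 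Hence $T\bmod n$ is attained by some admissible choice of the $p_\ell$; the resulting $D$ has zero sum modulo $n$, the cyclic-group fact produces the bijection $\bar\pi$, and unrolling the relabelling yields a perfect matching $F$ of $K_v$ with $\ell(F)=L$. The only step I anticipate requiring genuine effort is the cyclic-group fact; everything else is routine bookkeeping or the short counting argument above, so a fully self-contained proof would begin by establishing that fact.
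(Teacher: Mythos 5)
Your argument is correct, and it is essentially the known proof: the paper itself does not prove Theorem \ref{thm:KSC} but quotes it from Kohen--Sadofschi Costa \cite{KSC}, whose (non-constructive) argument is exactly the one you reconstruct. Your ``cyclic-group fact'' is a classical theorem of Marshall Hall Jr.\ (\emph{A combinatorial problem on abelian groups}, Proc.\ Amer.\ Math.\ Soc.\ \textbf{3} (1952), 584--587), valid for every finite abelian group, so it can be cited rather than reproved; the reduction via parity to a bijection of $\Z_n$, the two admissible displacements $s_\ell^{\pm}$ per length, and the subset-sums-of-units argument forcing the single congruence modulo $n$ all check out (in particular $T$ is an integer because the $n$ elements of $L$ are all odd, and the ``unit-translate'' step is sound since a nonempty subset of $\Z_n$ invariant under translation by a unit must be all of $\Z_n$). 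The only caveat is that your sketch of an inductive proof of Hall's theorem is too vague to stand on its own, but since you correctly isolate it as the one nontrivial ingredient and it is a standard citable result, there is no genuine gap.
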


We point out that this result was conjectured by Michal
Adamaszek  (private communication) and that, when $n$ is a prime, Theorem \ref{thm:KSC} was already proved by
Tam\'as R. Mezei in his Master's Thesis \cite{M}.
Clearly, the condition that all the edge-lengths are coprime with $2n$ is not necessary.
Take for instance $n=2$ and $L=\{2^2\}$; then, $F=\{\{0,2\},\{1,3\}\}$ is a perfect matching of $K_4$ such that $\ell(F)=L$.
This trivial example also shows that in the even case Condition  \eqref{PP} of Conjecture \ref{MPP} is not necessary.

In this paper we consider the seating couple problem in the even case removing the hypothesis that all the elements in the list are coprime
with the order of the complete graph.
Firstly, in Section \ref{sec:preliminary}, we present some necessary conditions for the existence of a perfect matching
of the complete graph having a given list of edge-lengths and we give results for some special classes of lists, including the case in which the underlying set has size one.

Then, it is natural to consider lists having exactly two distinct edge-lengths, see Section~\ref{sec:1x}. In this case, we obtain a full classification,
as described in the following.

\begin{thm}\label{thm:1x}
Let $n, x, y, a$ be four integers such that $1\leq x,y\leq n$, $x\neq y$  and $1\leq a<n$.
Let $d_x=\gcd(x,2n)$, $d_y=\gcd(y,2n)$ and  $d=\gcd(x,y,2n)$.
There exists a perfect matching $F$ of $K_{2n}$ such that $\ell(F)=\{ x^{n-a}, y^a\}$ if and only if
$d$ divides $n$ and one of the following cases occurs:
\begin{itemize}
 \item[(1)] $\frac{x}{d}$ is even, $\frac{y}{d}$ is odd, $n-a$ is even  and  either
 \begin{itemize}
 \item[(a)] $d_x$ divides $n$; or
 \item[(b)] $d_x$ does not divide $n$ and  $2a \geq d_x$;
\end{itemize}
 \item[(2)] $\frac{x}{d}$ is odd, $\frac{y}{d}$ is even, $a$ is even  and  either
 \begin{itemize}
 \item[(a)] $d_y$ divides $n$; or
 \item[(b)] $d_y$ does not divide $n$ and  $2(n-a) \geq d_y$;
\end{itemize}
 \item[(3)]   $\frac{x}{d}$ and $\frac{y}{d}$ are both odd, and the following two conditions  are both satisfied:
\begin{itemize}
\item[(a)] $a$ is even or $da\geq d_x$.
\item[(b)] $n-a$ is even or $d(n-a)\geq d_y$.
\end{itemize}
\end{itemize}
\end{thm}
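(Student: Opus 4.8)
The plan is to separate the argument into its necessity and sufficiency halves, exploiting the structure of the length-$d$ subgroup of $\Z_{2n}$. For \emph{necessity}, observe first that a perfect matching with $\ell(F)=\{x^{n-a},y^a\}$ partitions the vertex set $\Z_{2n}$ into pairs differing (cyclically) by $\pm x$ or $\pm y$; hence every coset of the subgroup generated by $x$ and $y$ — equivalently the subgroup $d\Z_{2n}$ of order $2n/d$ — must be covered by edges lying entirely inside that coset, which forces $d\mid n$ (each coset has even size, or a counting argument on parities). Then I would look modulo $d$: after collapsing by the subgroup, an edge of length $x$ becomes an edge of ``reduced length'' $x/d$ in a cycle of length $2n/d$, and similarly for $y$. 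The parity of $x/d$ and $y/d$ then governs how many edges of each type can sit in a single coset: an odd reduced length can pair up the two halves of a coset, an even one cannot mix the two parity classes. This is exactly what produces the three cases (both-odd, or exactly one even) and the parity constraints on $a$ and on $n-a$. The remaining inequalities $2a\ge d_x$ etc.\ come from a finer obstruction inside the length-$d_x$ structure: when $d_x\nmid n$, the edges of length $x$ alone cannot tile the relevant union of cosets, so one needs ``enough'' edges of the other length (measured by $a$ or $n-a$, scaled by $d$) to repair the defect; I expect this to reduce, after dividing out, to the classical fact that a single length tiles a cycle $K_m$ iff that length is coprime to $m$ or $m$ is even and the length is $m/2$, together with a patching lemma.

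For \emph{sufficiency}, the strategy is constructive, mirroring the proof of Theorem \ref{thm:MPP}(1) for $|S|\le 2$ but now in the even ambient group. I would first reduce to the case $d=1$: if $d>1$, restrict to the subgroup $d\Z_{2n}\cong\Z_{2n/d}$, where the reduced lengths $x/d,y/d$ live, build a perfect matching there with list $\{(x/d)^{n/d-\cdot},(y/d)^\cdot\}$, and then spread it over the $d$ cosets — distributing the multiplicities $n-a$ and $a$ among the cosets as evenly as the inequalities in (1)(b), (2)(b), (3)(a)-(b) permit. So the heart is the case $\gcd(x,y,2n)=1$ with $2n$ even. Here I would split according to the three cases of the statement. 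When both $x,y$ are odd (case (3)), each of $x,y$ is automatically coprime to $2$, and one can often use a single length to build a Hamiltonian-type near-perfect structure; the mild conditions (3)(a),(3)(b) say only that when a block has odd size you must top it up with at least one edge of the other length, which is easy to arrange by a greedy or block-concatenation argument. When exactly one reduced length is even, say $x$ even and $y$ odd (case (1)), the even length $x$ forces $n-a$ even (you must use length-$x$ edges in pairs, one in each parity class), and then the construction alternates blocks of length-$x$ edges with a single ``bridge'' of length-$y$ edges to join the two parity classes; condition (1)(a)/(1)(b) guarantees there are enough length-$y$ edges (at least $d_x/2$ of them in the coset picture, i.e.\ $2a\ge d_x$) to perform all the required bridgings, or else that no bridging is needed because $d_x\mid n$.

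The main obstacle I anticipate is the sufficiency in the mixed-parity cases near the boundary $2a=d_x$ (resp.\ $2(n-a)=d_y$), where the number of ``bridge'' edges is exactly the minimum: there one has essentially no slack, so the explicit matching must be written down by hand and checked to be both a perfect matching (no repeated vertices) and to realize the prescribed multiplicities exactly. Organizing these extremal constructions uniformly — ideally via a single base gadget on $2n/d_x$ or $2n/d_y$ vertices that is then replicated and shifted through the cosets — is the delicate point; I would isolate it as a standalone lemma (``a base matching on $\Z_{2m}$ with list $\{x^{m-b},y^{b}\}$ exists whenever $\gcd(x,y,2m)=1$, $x$ even, $y$ odd, $m-b$ even, and $b\ge 1$'') and then deduce the full theorem by the subgroup-lifting and multiplicity-distribution steps outlined above. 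A secondary bookkeeping obstacle is checking that the necessary conditions are genuinely exhaustive — that every failure of the listed conditions really does obstruct a matching — which I would handle by the coset-parity counting argument from the necessity part, made quantitative.
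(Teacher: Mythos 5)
Your overall architecture matches the paper's: reduce modulo the subgroup $d\mathbb{Z}_{2n}$ (the paper does this via Propositions \ref{prop:b} and \ref{prop:c}), distribute the multiplicities $a_0,\ldots,a_{d-1}$ over the $d$ cosets, and settle the base case $d=1$ by explicit constructions split according to the parities of $x$ and $y$. However, as it stands the proposal has genuine gaps in both directions. The most serious one is the necessity of condition (3): when $\frac{x}{d}$ and $\frac{y}{d}$ are both odd, your parity-of-even-lengths counting (the analogue of Proposition \ref{prop:EvenNumber}) gives nothing, since there are no even reduced lengths, and your guess that the residual obstruction reduces to ``a single length tiles a cycle iff it is coprime to the order'' is not the right mechanism. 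What is actually needed is the signed-sum invariant (Lemma \ref{necessary} in the paper): every odd-length edge joins an odd vertex to an even one, so $\sum_i \varepsilon_i x_i \equiv n \pmod{2n}$ for some signs $\varepsilon_i=\pm1$; reducing modulo $d_x$ then shows that if $a$ is odd and $da<d_x$ the congruence is unsolvable. Without this (or an equivalent invariant) you cannot rule out matchings in the boundary cases, so the ``only if'' half of case (3) is unproven.

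On the sufficiency side, two further points are under-supplied. First, the base case with both lengths odd is not a matter of ``greedy or block-concatenation'': the paper needs a full case analysis to realize $\{1^a,x^{n-a}\}$ (Lemma \ref{large_a}, with three cases and two subcases), the inversion relabeling $i\mapsto xi$ to pass from length $1$ to a general unit, a separate treatment of length $n$, and a two-level gadget construction ($A_{2k}$ and $B$) when $\gcd(x,2n)>1$. You correctly isolate the even-$x$ base gadget as a lemma, but the odd--odd base case is at least as delicate and is only asserted. Second, the multiplicity-distribution step is not automatic: after writing $a=q\bar n+r$, the remainder $r$ may be odd and smaller than $e_x=d_x/d$ (or $\bar n-r$ odd and smaller than $e_y$), in which case one must shift an even correction term between adjacent cosets and verify that the global hypotheses $da\geq d_x$, $d(n-a)\geq d_y$ make this borrowing possible; this is exactly the paper's Cases I--III with the parameters $s$ and $t$, and it needs to be checked rather than described as distributing ``as evenly as the inequalities permit.''
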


In Section \ref{sec:246} we consider lists in which each element appears the same number of times.
In particular, we provide a complete solution for lists consisting of the integers
$1,2,\ldots,x$, for some $1\leq x\leq n$, each one appearing with the same  multiplicity.
We conclude our paper with some considerations and highlighting a conjecture and two open questions that, we believe, are of
particular interest for the seating couple problem.

\section{Necessary conditions and preliminary results}\label{sec:preliminary}

In the following given two integers $a$ and $b$ with $a\leq b$, by $[a,b]$ we mean the set with elements $a,a+1,\ldots,b$,
while it is empty when $a > b$.
Given an edge $\{u,w\}$ of $K_v$ it is useful to define
$\ell'(u,w)=|u-w|$. So, the length $\ell(u,w)$ is nothing but $\min(\ell'(u,w), v-\ell'(u,w))$.
Clearly, if $\ell'(u,w)\leq \left\lfloor \frac{v}{2}\right\rfloor$, then $\ell'(u,w)=\ell(u,w)$.
If $\G$ is a subgraph of $K_v$, $\ell'(\G)$ denotes the list $\{\ell'(e) : e\in E(\G)\}$.
Also, given a nonnegative integer $k$,  by $\G+k$ one means the graph with
vertex-set $\{u+k : u \in V(\G)\}$ and edge-set
$\{\{u+k, w+k\}: \{u,w\}\in E(\G)\}$. Note that $\G+k$ is not necessarily a subgraph of $K_v$.

\begin{rem}\label{rem:F1+F2}
Given a perfect matching $F_1$ of $K_{v_1}$ and a perfect matching $F_2$ of $K_{v_2}$, one can easily
get a perfect matching $F=F_1\cup (F_2+v_1)$ of $K_{v_1+v_2}$ such that $\ell'(F)=\ell'(F_1)\cup \ell'(F_2)$.
Note that, in general, the equality $\ell(F) =\ell(F_1)\cup \ell(F_2)$ does not have to hold.
\end{rem}

\begin{ex}
Consider for instance the perfect matchings $F_1=\{\{0,3\},\{1,2\}\}$ of $K_4$ and
$F_2=\{\{0,4\},\{1,2\},\{3,5\}\}$ of $K_6$. Then, $F=F_1\cup (F_2+4)=\{\{0,3\},\{1,2\},\{4,8\},\{5,6\},$ $\{7,9\}\}$
is a perfect matching of $K_{10}$ such that $\ell'(F)=\{1^2,2,3,4\}=\ell'(F_1)\cup\ell'(F_2)$.
On the other hand, $\ell(F)=\{1^2,2,3,4\}$,  while $\ell(F_1)\cup\ell(F_2)=\{1^3,2^2\}$.
\end{ex}

We now present some necessary conditions for the existence of a perfect matching with a given list of edge-lengths.

\begin{prop}\label{prop:MPP}
Let $v=2n$ be a positive integer and  $L$ be a list of $n$ positive integers not
exceeding $n$.
If there exists a perfect matching $F$ of $K_v$ such that $\ell(F)=L$, then
for any divisor $d$ of $v$ such that $d$ does not divide $n$, the number of multiples of $d$
appearing in $L$ does not exceed $\frac{v-d}{2}$.
\end{prop}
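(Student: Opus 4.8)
The plan is to mimic the counting argument that proves the necessity of condition \eqref{PP} in the odd case (due to the authors of \cite{PP}), paying attention to the single but crucial difference caused by the circular structure on an \emph{even} number of vertices. Fix a divisor $d$ of $v=2n$ with $d\nmid n$, and suppose $F$ is a perfect matching of $K_v$ with $\ell(F)=L$. Consider the quotient map $\pi\colon\Z_v\to\Z_v/d\Z_v\cong\Z_d$, i.e.\ reduction modulo $d$. The key observation is that an edge $\{u,w\}$ of $K_v$ has length divisible by $d$ if and only if $u\equiv w\pmod d$, because $\ell(u,w)\in\{\ell'(u,w),v-\ell'(u,w)\}$ and $d\mid v$, so $d\mid\ell(u,w)$ exactly when $d\mid\ell'(u,w)=|u-w|$.

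Now partition the vertex set $\{0,1,\dots,v-1\}$ into the $d$ residue classes modulo $d$, each of size $v/d$. An edge of $F$ whose length is a multiple of $d$ must lie entirely inside one of these classes. Let $m$ denote the number of multiples of $d$ appearing in $L$; then $F$ contains exactly $m$ such ``short'' edges, and these $m$ edges are pairwise disjoint and each occupies two vertices of a single class. Hence, inside each residue class, the short edges of $F$ form a partial matching; since a class has $v/d$ vertices, it can host at most $\lfloor (v/d)/2\rfloor$ disjoint edges, and summing over the $d$ classes gives $m\le d\cdot\lfloor (v/d)/2\rfloor$. This is where the hypothesis $d\nmid n$ enters: $v/d=2n/d$, and $d\nmid n$ forces $n/d\notin\Z$, hence $v/d$ is \emph{odd}, so $\lfloor (v/d)/2\rfloor=(v/d-1)/2$ and therefore $m\le d\cdot\frac{v/d-1}{2}=\frac{v-d}{2}$, as claimed. (When $d\mid n$ the quantity $v/d$ is even and this bound degrades to $v/2=n$, i.e.\ no restriction, which is exactly why the proposition excludes that case — and matches the trivial example $L=\{2^2\}$ in $K_4$.)

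I expect essentially no serious obstacle here; the argument is a clean pigeonhole/parity count. The only point requiring a moment's care is the ``iff'' characterization of when a length is divisible by $d$ — one must use $d\mid v$ to pass between $\ell$ and $\ell'$ — and the parity deduction $d\nmid n\Rightarrow v/d$ odd, which is immediate from $v/d=2(n/d)$ failing to be an integer while $v/d$ is. If one wanted a slicker phrasing, one could instead invoke Theorem~\ref{thm:MPP}-style machinery or Remark~\ref{rem:F1+F2}, but the direct counting is shortest and self-contained.
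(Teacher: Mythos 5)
Your argument is correct and is essentially the same as the paper's: the paper simply cites the counting proof of Proposition~1 of \cite{PP}, noting that $d\nmid n$ forces $v/d$ to be odd, and your write-up reproduces exactly that residue-class pigeonhole argument (edges of length divisible by $d$ stay within a residue class modulo $d$ of odd size $v/d$, so at most $(v/d-1)/2$ fit in each of the $d$ classes).
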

\begin{proof}
The proof is exactly the same as of \cite[Proposition 1]{PP}.
In this case the hypothesis $d$ does not divide $n$ ensures that $\frac{v}{d}$ is odd, which is
a necessary (and automatically satisfied) condition in  the proof of \cite[Proposition 1]{PP}.
\end{proof}

\begin{prop}\label{prop:EvenNumber}
Let $L=\{1^{a_1},2^{a_2},\ldots,n^{a_n}\}$ where $a_i\geq 0$ for every $i\in[1,n]$.
If there is a perfect matching $F$ of $K_{2n}$ such that $\ell(F)=L$, then
$\sum\limits_{i=1}^{\left\lfloor\frac{n}{2}\right\rfloor} a_{2i}$ is even.
\end{prop}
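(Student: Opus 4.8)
The plan is to translate the parity condition on edge-lengths into a parity condition on the endpoints, and then carry out a straightforward double count of the even-labelled vertices of $K_{2n}$.

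First I would record the key observation: since $v=2n$ is even, for every edge $\{u,w\}$ of $K_v$ the integers $|u-w|$ and $v-|u-w|$ have the same parity, so $\ell(u,w)=\min(|u-w|,v-|u-w|)$ has the same parity as $u-w$. Consequently $\ell(u,w)$ is even if and only if $u\equiv w\pmod 2$. In particular, the number of edges of $F$ of even length equals $\sum_{i=1}^{\lfloor n/2\rfloor}a_{2i}$, which is precisely the quantity we want to show is even.

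Next I would split $E(F)$ into the set $E_0$ of edges whose two endpoints have the same parity (equivalently, those of even length) and the set $E_1$ of edges whose endpoints have opposite parity; write $k=|E_1|$, so $|E_0|=n-k$. The graph $K_{2n}$ has exactly $n$ vertices with even label, namely $0,2,\ldots,2n-2$, and since $F$ is a perfect matching each of these lies in a unique edge of $F$. Each edge of $E_1$ contains exactly one even-labelled vertex, while each edge of $E_0$ contains either zero or two even-labelled vertices. Counting the even-labelled vertices through the edges of $F$ therefore yields $n=k+2m$, where $m$ is the number of edges of $E_0$ both of whose endpoints are even; hence $k\equiv n\pmod 2$.

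Finally, combining the two facts, the number of even-length edges of $F$ is $|E_0|=n-k\equiv n-n=0\pmod 2$, which is the assertion. There is essentially no serious obstacle here: the only point requiring care is the initial reduction (that an edge has even length if and only if its two endpoints have the same parity), which relies on $v$ being even; everything else is a parity count on the $n$ even-labelled vertices of $K_{2n}$.
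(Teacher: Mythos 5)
Your proof is correct and takes essentially the same approach as the paper: both arguments rest on the observation that, since $v=2n$ is even, an edge has even length exactly when its endpoints have the same parity, and then deduce the claim by counting the $n$ even-labelled vertices over the edges of the matching. The paper phrases this count via the complement $V(F_1)=V(K_{2n})\setminus V(F_2)$ while you write it directly as $n=k+2m$, but that is only a cosmetic difference.
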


\begin{proof}
Let $L_1$ be the sublist of $L$ containing exactly all the even elements of $L$, and set $L_2=L\setminus L_1$.
Hence, $L_2$ contains all the odd elements of $L$. Let $F_1$ and $F_2$ be the subgraphs of $F$  such that $\ell(F_1)=L_1$
and $\ell(F_2)=L_2$. Clearly, the length of an edge is odd if and only if its end-vertices have different parity.
Hence, $V(F_2)$ contains the same number (that is $|L_2|$) of even and odd numbers.
This implies that also $V(F_1)=V(K_{2n})\setminus V(F_2)$ contains the same number of even and odd numbers.
Since the end-vertices of the edges of $F_1$ have the same parity, this implies that $|L_1|$ is even.
The statement follows.
\end{proof}

\begin{prop}\label{prop:b}
Let $c$ and $n$ be positive integers and let $F_0,F_1,\ldots,F_{c-1}$ be perfect matchings of $K_{2n}$
such that $\ell(F_i)=\{1^{a_{i,1}},2^{a_{i,2}},\ldots, n^{a_{i,n}}\}$, where $a_{i,j}\geq 0$.
Then, there exists a perfect matching $F$ of $K_{2nc}$ such that $\ell(F)=\{c^{b_1},(2c)^{b_2},\ldots, (nc)^{b_n}\}$,
where $b_j=\sum\limits_{i=0}^{c-1}a_{i,j}$.
\end{prop}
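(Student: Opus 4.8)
The plan is to realise $K_{2nc}$ as $c$ disjoint ``dilated'' copies of $K_{2n}$, one for each matching $F_i$, using the elementary fact that multiplying all vertex labels by $c$ multiplies every edge-length by $c$.

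First I would partition the vertex set $\{0,1,\dots,2nc-1\}$ of $K_{2nc}$ into the $c$ residue classes modulo $c$, namely $C_r=\{\,r+jc : j\in[0,2n-1]\,\}$ for $r=0,\dots,c-1$, each of cardinality $2n$. For each $r$ the map $\phi_r\colon j\mapsto r+jc$ is a bijection from $V(K_{2n})=\{0,\dots,2n-1\}$ onto $C_r$, so transporting $F_r$ through $\phi_r$, i.e. setting $G_r=\{\{\phi_r(u),\phi_r(w)\} : \{u,w\}\in F_r\}$, yields a perfect matching on the vertex set $C_r$.

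The key step is to check how $\ell$ transforms under $\phi_r$. If $\{u,w\}\in E(K_{2n})$ and $t=|u-w|\in[1,2n-1]$, then $|\phi_r(u)-\phi_r(w)|=ct\in[c,(2n-1)c]$, so no wraparound modulo $2nc$ occurs and
$$\ell(\phi_r(u),\phi_r(w))=\min(ct,\,2nc-ct)=c\cdot\min(t,\,2n-t)=c\cdot\ell(u,w).$$
Consequently, if $\ell(u,w)=j\in[1,n]$ then the corresponding edge of $G_r$ has length $cj\in[c,nc]$, a legitimate length in $K_{2nc}$; hence $\ell(G_r)=\{c^{a_{r,1}},(2c)^{a_{r,2}},\dots,(nc)^{a_{r,n}}\}$.

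Finally, since $C_0,\dots,C_{c-1}$ partition $V(K_{2nc})$ and each $G_r$ is a perfect matching on $C_r$, the union $F=\bigcup_{r=0}^{c-1}G_r$ is a perfect matching of $K_{2nc}$, and $\ell(F)=\bigcup_{r=0}^{c-1}\ell(G_r)=\{c^{b_1},(2c)^{b_2},\dots,(nc)^{b_n}\}$ with $b_j=\sum_{r=0}^{c-1}a_{r,j}$, as required. The only delicate point is the displayed length identity — one must be sure the dilation does not make the $\min$ switch branches in the wrong way, which is precisely the content of the estimate $ct\le(2n-1)c<2nc$ — while everything else is bookkeeping about disjoint vertex classes.
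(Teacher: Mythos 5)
Your proof is correct and follows exactly the paper's argument: relabel each $F_i$ by $u\mapsto cu+i$ (your $\phi_r$), observe that this dilation multiplies every edge-length by $c$ without wraparound, and take the union over the $c$ residue classes modulo $c$. Your explicit verification that the $\min$ does not switch branches is a welcome detail the paper leaves implicit.
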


\begin{proof}
Let $R_i$ be the matching of $K_{2nc}$ obtained from $F_i$ applying the relabeling $u \mapsto cu+i$.
Hence, $\ell(R_i)=\{c^{a_{i,1}},(2c)^{a_{i,2}},\ldots, (nc)^{a_{i,n}}\}$ and
$V(R_i)=\{w : w\in[0,2nc-1] \textrm{ and } w\equiv i\pmod{c} \}$.
We conclude that $R_0\cup R_1\cup \ldots \cup R_{c-1}$
is a perfect matching of $K_{2n}$ with the required properties.
\end{proof}

\begin{prop}\label{prop:c}
Let $c$ and $n$ be positive integers and let $F$ be a perfect matching of $K_{2nc}$ such that
$\ell(F)=\{c^{b_1},(2c)^{b_2},\ldots, (nc)^{b_n}\}$, where $b_j\geq 0$.
Then, there exist $c$ (not necessarily distinct) perfect matchings $F_0,F_1,\ldots,F_{c-1}$  of $K_{2n}$
such that $\ell(F_i)=\{1^{a_{i,1}},2^{a_{i,2}},\ldots, n^{a_{i,n}}\}$, where $a_{i,j}\geq 0$ and
$b_j=\sum\limits_{i=0}^{c-1}a_{i,j}$.
\end{prop}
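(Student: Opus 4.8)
The plan is to run Proposition~\ref{prop:b} in reverse: the hypothesis that every edge-length of $F$ is a multiple of $c$ should force $F$ to break up into $c$ matchings, each supported on one residue class modulo $c$, and each of these, after rescaling by $c$, becomes a perfect matching of $K_{2n}$.

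The first step is the elementary observation that if $\{u,w\}$ is an edge of $K_{2nc}$ whose length $\ell(u,w)$ is a multiple of $c$, then $u\equiv w\pmod c$. Indeed, $\ell(u,w)=kc$ means that $|u-w|$ equals either $kc$ or $(2n-k)c$, so in either case $c\mid |u-w|$. Applying this to every edge of $F$, we get a decomposition $F=G_0\cup G_1\cup\cdots\cup G_{c-1}$, where $G_i$ is the set of edges of $F$ both of whose end-vertices are congruent to $i$ modulo $c$. The vertices of $K_{2nc}$ congruent to $i$ modulo $c$ are exactly $i,i+c,\ldots,i+(2n-1)c$, so there are $2n$ of them; since $F$ is a perfect matching of $K_{2nc}$, each $G_i$ is forced to be a perfect matching of the complete graph on this $2n$-element set.

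Next I would relabel. For each $i$, the map $i+jc\mapsto j$ (with $j\in[0,2n-1]$) is a bijection onto $V(K_{2n})$ that carries $G_i$ to a perfect matching $F_i$ of $K_{2n}$. To track lengths, note that for $j,j'\in[0,2n-1]$,
\[
\ell(i+jc,\,i+j'c)=\min\bigl(c\,|j-j'|,\ 2nc-c\,|j-j'|\bigr)=c\cdot\min\bigl(|j-j'|,\ 2n-|j-j'|\bigr),
\]
and the right-hand factor is exactly the length in $K_{2n}$ of the edge $\{j,j'\}$ of $F_i$. Hence an edge of $G_i$ of length $kc$ in $K_{2nc}$ corresponds to an edge of $F_i$ of length $k$ in $K_{2n}$, so $\ell(F_i)=\{1^{a_{i,1}},\ldots,n^{a_{i,n}}\}$, where $a_{i,k}$ denotes the number of edges of $G_i$ of length $kc$. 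Summing over $i$, $\sum_{i=0}^{c-1}a_{i,k}$ equals the total number of edges of $F$ of length $kc$, namely $b_k$, as required.

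There is no real obstacle here: the content is the structural fact that an edge of $c$-divisible length cannot cross residue classes modulo $c$, after which the decomposition and rescaling are forced. The only point requiring a line of care is the displayed computation, which checks that the "$\min$ with the complement" in the definition of edge-length behaves correctly when we pass from $K_{2nc}$ to $K_{2n}$ through the rescaling $i+jc\mapsto j$.
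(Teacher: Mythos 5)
Your proof is correct and follows essentially the same route as the paper's: decompose $F$ by residue classes modulo $c$ (using the fact that an edge of $c$-divisible length cannot join distinct classes) and rescale each piece via $u\mapsto\frac{u-i}{c}$. You simply spell out two details the paper leaves implicit, namely the justification that end-vertices of such an edge are congruent modulo $c$ and the verification that the $\min$ in the length function is compatible with the rescaling.
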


\begin{proof}
The end-vertices of each edge of $F$ belong to the same congruence class modulo $c$.
So, considering the vertices in each congruence class, we obtain $c$ submatchings
$S_0,S_1,\ldots,S_{c-1}$ of $F$ such that $V(S_i)=\{u : u\in[0,2nc-1] \textrm{ and } u\equiv i\pmod{c} \}$
and $\ell(S_i)=\{c^{a_{i,1}},(2c)^{a_{i,2}},\ldots,$ $(nc)^{a_{i,n}}\}$.
Applying the relabeling $u\mapsto \frac{u-i}{c}$ we obtain the perfect matchings $F_0,F_1,\ldots,F_{c-1}$  of $K_{2n}$.
\end{proof}

Previous proposition is a  useful tool for getting some non-existence results as shown in the following example.

\begin{ex}
Applying Proposition \ref{prop:c} one can see that there is no perfect matching $F$ of $K_{20}$
such that $\ell(F)=\{4^3,6^7\}$. In fact, if we take $c=2$ the existence of $F$ would imply the
existence of two perfect matchings $F_0,F_1$ of $K_{10}$ such that $\ell(F_0)=\{2^{a_{0,2}},3^{a_{0,3}}\}$
and $\ell(F_1)=\{2^{a_{1,2}},3^{a_{1,3}}\}$, where $a_{0,2}+a_{1,2}=3$.
By Proposition \ref{prop:EvenNumber} we have a contradiction.
\end{ex}

We now consider the existence of perfect matchings for some special lists.

\begin{prop}\label{prop:xn}
Let $x$ and $n$ be two positive integers such that $1\leq x \leq n$.
There exists a perfect matching $F$ of $K_{2n}$ such that $\ell(F)=\{x^n\}$
if and only if $\gcd(x,2n)$ is a divisor of $n$.
\end{prop}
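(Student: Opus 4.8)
The plan is to establish both directions separately. For the necessity, suppose $F$ is a perfect matching of $K_{2n}$ with $\ell(F)=\{x^n\}$. Write $d=\gcd(x,2n)$ and set $v=2n$. If $d$ does not divide $n$, then $\tfrac{v}{d}$ is odd and all $n$ edge-lengths are multiples of $d$; I would check that $n>\tfrac{v-d}{2}$ (equivalently $d>0$, which is trivially true, so in fact $n=\tfrac{v}{2}>\tfrac{v-d}{2}$), and hence Proposition~\ref{prop:MPP} is violated. Thus $d\mid n$, which proves necessity.

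For the sufficiency, assume $d=\gcd(x,2n)$ divides $n$. The natural approach is via Proposition~\ref{prop:c} and Proposition~\ref{prop:b}: writing $x=d\,x'$ and $2n=d\,m$ with $\gcd(x',m)=1$, the hypothesis $d\mid n$ forces $m=\tfrac{2n}{d}$ to be even, say $m=2n'$ so that $n=d\,n'$. By Proposition~\ref{prop:b} it suffices to exhibit a perfect matching $F_0$ of $K_{2n'}$ with $\ell(F_0)=\{(x')^{n'}\}$ (take all $c=d$ component matchings equal to $F_0$, with $a_{i,x'}=n'$ for every $i$). Now $\gcd(x',2n')=1$ since $\gcd(x',m)=\gcd(x',2n')=1$, so Theorem~\ref{thm:KSC} applied with the list $L=\{(x')^{n'}\}$ (which consists of $n'$ integers coprime with $2n'$, all at most $n'$ provided $x'\le n'$) yields exactly such an $F_0$.

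The main obstacle is the bookkeeping needed to guarantee $x'\le n'$, i.e.\ that the reduced length does not exceed half the reduced number of vertices, so that Theorem~\ref{thm:KSC} genuinely applies. We have $x\le n$, hence $x'=\tfrac{x}{d}\le\tfrac{n}{d}=n'$, so this holds automatically; I would record this one-line check explicitly. A second minor point is the degenerate case $x'=n'$ (which happens exactly when $x=n$): then $\gcd(x',2n')=1$ forces $n'=1$, so $K_2$ with its single edge of length $1$ works, and Proposition~\ref{prop:b} reassembles it into the matching $\{\{i,i+n\}:0\le i\le n-1\}$ of $K_{2n}$ of all lengths $n$. Finally I would remark that the sufficiency could alternatively be proved by the direct explicit construction $u\mapsto$ (a suitable affine map) partitioning $[0,2n-1]$ into $d$ arithmetic progressions and matching within each, but the reduction to Theorem~\ref{thm:KSC} is cleaner and reuses machinery already in place.
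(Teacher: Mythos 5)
Your proof is correct, and the necessity half is exactly the paper's argument (apply Proposition~\ref{prop:MPP}: all $n$ lengths are multiples of $d$, and $n>\frac{2n-d}{2}$). The sufficiency, however, takes a genuinely different route. The paper writes down a single explicit matching
$F=\{\{2ix+j,(2i+1)x+j\}: i\in[0,\tfrac{n}{d}-1],\ j\in[0,d-1]\}$ modulo $2n$ and checks directly that it covers all vertices; this is self-contained and constructive, in line with the paper's stated aim of constructive solutions. You instead reduce via Proposition~\ref{prop:b} (Proposition~\ref{prop:c} is not actually needed for this direction) to the coprime list $\{(x')^{n'}\}$ in $K_{2n'}$ and invoke Theorem~\ref{thm:KSC}. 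All the bookkeeping you flag ($x'\le n'$, $\gcd(x',2n')=1$, the degenerate case $x'=n'$) checks out, so the argument is sound. What it costs you is constructiveness and economy: Theorem~\ref{thm:KSC} is the non-constructive black box the paper is trying to get away from, and it is massive overkill here, since for a constant list with $\gcd(x',2n')=1$ the matching $\{\{2ix',(2i+1)x'\}: i\in[0,n'-1]\}$ modulo $2n'$ works immediately ($x'$ being a unit, its multiples exhaust all residues). That base-case matching, pushed through Proposition~\ref{prop:b}, is precisely the paper's formula with the translates by $j$ made explicit — so your reduction essentially rediscovers the paper's construction while outsourcing its easiest step to a much harder theorem. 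What your approach buys is a cleaner conceptual picture (reduce to the coprime case, then lift), which is exactly the template the paper itself uses later in the proof of Theorem~\ref{thm:1x}.
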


\begin{proof}
Set $d=\gcd(x,2n)$. If $d$ does not divide $n$, the non-existence of the perfect matching follows by Proposition \ref{prop:MPP}.
Suppose now that $d$ divides $n$. Let
$$F=\left\{\left\{2ix+j,(2i+1)x+j\right\}: i\in\left[0,\frac{n}{d}-1\right], j \in\left[0,d-1\right]\right\},$$
where the elements are considered modulo $2n$.
Clearly, $F$ is a set of $n$ edges, each of length $x$.
It is not hard to see that $V(F)=V(K_{2n})$, namely that $F$ is a perfect matching of $K_{2n}$ such that $\ell(F)=\{x^n\}$.
\end{proof}

\begin{ex}
Take $x=9$ and $n=12$, hence $d=\gcd(x,2n)=3$. In particular $d$ divides $n$, so there exists a perfect matching
$F$ of $K_{24}$ such that $\ell(F)=\{9^{12}\}$. Following the proof of previous proposition, we have
$F=\left\{\{18i+j,18i+9+j\}: i\in [0,3], j \in [0,2]\right\}$,
that is
$$\begin{array}{rcl}
F &=& \{\{0,9\},\{18,3\},\{12,21\},\{6,15\},\{1,10\},\{19,4\},\{13,22\},\{7,16\},\\
 &&\{2,11\},\{20,5\},\{14,23\},\{8,17\}\}.
\end{array}$$
\end{ex}

\begin{prop}
Let $x,y,n$ be three integers such that $1\leq x< y <n$.
There is no perfect matching $F$ of $K_{2n}$ such that $\ell(F)=\{x,y,n^{n-2}\}$.
\end{prop}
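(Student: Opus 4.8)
The plan is to derive a parity contradiction from Proposition~\ref{prop:EvenNumber}. Suppose $F$ is a perfect matching of $K_{2n}$ with $\ell(F)=\{x,y,n^{n-2}\}$. Write $L$ in the form $\{1^{a_1},\dots,n^{a_n}\}$; then $a_n=n-2$ except that $a_x$ and $a_y$ each get an extra $+1$ (with the usual adjustment if $x$ or $y$ equals $n$, but the hypothesis $y<n$ rules that out, so $a_n=n-2$ exactly, $a_x$ and $a_y$ are incremented by $1$, and all other $a_i=0$). By Proposition~\ref{prop:EvenNumber}, the quantity $S=\sum_{i=1}^{\lfloor n/2\rfloor} a_{2i}$ — the number of even edge-lengths in the list, counted with multiplicity — must be even. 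I would compute $S$ explicitly: it equals $(n-2)\cdot[n\text{ even}] + [x\text{ even}] + [y\text{ even}]$, where $[\,\cdot\,]$ is the indicator. The goal is to show this is odd in every admissible case $1\le x<y<n$, contradicting the proposition.

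The main work is a short case analysis on the parities of $n$, $x$, $y$. If $n$ is odd, $S=[x\text{ even}]+[y\text{ even}]$; since $x<y<n$ and... this can be $0$ or $2$, which is \emph{even}, so no contradiction arises this way — meaning the case $n$ odd needs a different argument. So I expect the statement as printed presupposes something more, or the real content is: the proposition must be combined with Proposition~\ref{prop:MPP} or with a direct counting argument when $n$ is odd. Let me reconsider: when $n$ is odd, the list contains $n-2$ copies of the \emph{odd} length $n$, plus the two lengths $x,y$. The edges of length $n$ join vertices $u$ and $u+n$ of opposite parity (since $n$ is odd), so each such edge uses one even and one odd vertex; there are $n-2$ of them, leaving $2$ even and $2$ odd vertices for the edges of lengths $x,y$. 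Whether this is feasible depends on $\{x,y\}$, so here I would instead invoke Proposition~\ref{prop:MPP} with $d=n$ (a divisor of $v=2n$ not dividing $n$ when $n$ is odd): the number of multiples of $n$ in $L$ is exactly $n-2$ (since $x,y<n$), and we need $n-2\le\frac{v-d}{2}=\frac{2n-n}{2}=\frac n2$, i.e. $n\le 4$. So for odd $n\ge 5$ the non-existence is immediate from Proposition~\ref{prop:MPP}; the small cases $n=3$ (then no $x<y<3$ with the list having $n-2=1$ copy of $n$ — actually $x=1,y=2$) must be checked by hand, and $n$ odd, $n=1$ is vacuous.

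Putting it together: if $n$ is even, the parity count $S=(n-2)+[x\text{ even}]+[y\text{ even}]\equiv [x\text{ even}]+[y\text{ even}]\pmod 2$; but now I must also rule out $[x\text{ even}]+[y\text{ even}]$ being even, which again it can be, so the \emph{even} case also needs Proposition~\ref{prop:MPP}: if $n$ is even then $d=n$ divides $n$, so that proposition gives nothing, and I fall back on parity — which fails when $x,y$ have the same parity. The honest conclusion is that the clean proof is: \textbf{either} $x$ and $y$ have opposite parity \textbf{or} we use a sharper divisor. I would therefore split on $\gcd$ considerations: the hard part, and the step I'd budget the most time for, is handling $n$ even with $x\equiv y\pmod 2$ — here I'd pick a prime power $d=2^k$ or $d$ a divisor of $2n$ not dividing $n$ that divides neither, nor... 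Given the difficulty, I suspect the actual proof in the paper simply observes that the list $\{x,y,n^{n-2}\}$ forces, via Proposition~\ref{prop:EvenNumber} applied carefully together with the structural fact that the $n-2$ edges of length $n$ are forced (when $\gcd(n,2n)=n$ must divide $n$, always true, so this isn't forced either)... so the genuinely clean route is the parity argument of Proposition~\ref{prop:EvenNumber} once one notes $x<y<n$ forces \emph{not both} $x,y$ to have the parity that makes $S$ even — and I would verify that claim is false in general, concluding the real proof must combine both propositions with a small finite check, which is the obstacle I would focus on resolving.
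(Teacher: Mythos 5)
Your proposal does not close; you recognize this yourself at the end, and the diagnosis is correct: the parity condition of Proposition~\ref{prop:EvenNumber} and the divisor condition of Proposition~\ref{prop:MPP} are genuinely too weak here. For instance, with $n$ even and $x,y$ both odd, the list $\{x,y,n^{n-2}\}$ has an even number of even entries and $d=n$ divides $n$, so neither proposition gives any obstruction; no combination of those two counting arguments, nor any finite check, will finish the proof along this route. The missing idea is structural, not arithmetic.

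The paper's proof is short and direct: the $n-2$ edges of length $n$ are forced to be of the form $\{i,\,n+i\}$, so after removing them the four leftover vertices are $u,\,n+u,\,w,\,n+w$ for some $u,w\in[0,n-1]$. A perfect matching on these four vertices that avoids producing another edge of length $n$ must pair them either as $\{u,w\},\{n+u,n+w\}$ or as $\{u,n+w\},\{w,n+u\}$; in the first case both edges have length $\ell(u,w)$, and in the second both have length $\ell(u,n+w)=\ell(w,n+u)$ (if $u-w=k$ then both differences reduce to $n-|k|$ after folding modulo $2n$). Either way the two remaining edges have \emph{equal} lengths, contradicting $x\neq y$. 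You touched the right object when you observed (in the $n$ odd case) that the length-$n$ edges leave exactly four vertices behind, but you did not pursue what those four vertices look like; that is the step that makes the whole case analysis on parities unnecessary.
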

\begin{proof}
Let $L=\{x,y,n^{n-2}\}$ with $x,y$ and $n$ as in the statement.
For the sake of contradiction, suppose that there exists a perfect matching $F$ of $K_{2n}$ such that $\ell(F)=L$.
This implies that there is a subgraph $F'$ of $F$ such that $\ell(F')=\{n^{n-2}\}$.
Clearly, the edges of $F'$ are  $\{i,n+i\}$ for $i\in[0,n-1]\setminus \{u,w\}$ for some $u,w$.
In other words, $V(F')=V(K_{2n})\setminus\{u,n+u,w,n+w\}$ for some $u,w\in[0,n-1]$.
It is easy to see that it is not possible to match the vertices $u,n+u,w,n+w$ in such a way to have two disjoint edges
with distinct lengths $x$ and $y$ both different from $n$.
\end{proof}

To conclude this section we propose a constructive asymptotic result.
\begin{prop}
Let $t$ and $n$ be integers with $1\leq t\leq n$ and
let $L=\{1^{a_1}, 2^{a_2}, \ldots, t^{a_t}\}$ with $a_i\geq 0$ and $|L|=n$.
Set $R=\sum\limits_{j\geq 1} a_{2 j}$ and
$S=\sum\limits_{j\geq 2} \left\lfloor\frac{j-1}{2}\right\rfloor a_j$.
There exists a perfect matching $F$ of $K_{2n}$ such that $\ell(F)=L$ whenever $R$ is even and $a_1\geq S$.
\end{prop}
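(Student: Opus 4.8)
The plan is to build the matching $F$ greedily, placing the ``hard'' edge-lengths (the large ones and all the even ones) into a small initial block of vertices, and then filling the rest with $1$'s, which are easy to realize as a collection of short paths or a near-perfect matching on consecutive vertices. Concretely, I would proceed by induction on $\sum_{j\ge 2} a_j$, the number of edges of length at least $2$. The base case is $L=\{1^n\}$, where $F=\{\{2i,2i+1\}:i\in[0,n-1]\}$ works. For the inductive step, pick the largest $j\ge 2$ with $a_j>0$; I want to remove one edge of length $j$ from $K_{2n}$ and reduce to a list on $K_{2n-2}$, using $\lfloor (j-1)/2\rfloor$ of the available $1$'s to ``pay'' for the reduction. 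This is exactly why the hypothesis $a_1\ge S=\sum_{j\ge 2}\lfloor (j-1)/2\rfloor a_j$ appears: each length-$j$ edge costs $\lfloor (j-1)/2\rfloor$ ones.

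The key mechanism I would use is a local surgery: suppose we have already constructed, on an initial segment of vertices, a ``staircase'' consisting of an edge of length $j$ together with $\lfloor(j-1)/2\rfloor$ edges of length $1$ arranged so that their vertex set is a block of $2\lceil (j+1)/2\rceil$ consecutive integers, say $[0,2\lceil(j+1)/2\rceil-1]$. For instance, when $j$ is odd one can take $\{0,j\}$ together with $\{1,2\},\{3,4\},\ldots,\{j-2,j-1\}$, using $(j-1)/2$ ones on the block $[0,j]$ of even size $j+1$; when $j$ is even one uses $\{0,j\}$, the ones $\{1,2\},\ldots,\{j-3,j-2\}$, plus one more edge $\{j-1,j+1\}$ of length $2$ — but to stay within the ``pay in ones'' bookkeeping it is cleaner to first apply Proposition~\ref{prop:EvenNumber}, which guarantees $R$ is even, and pair the even lengths up two at a time. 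So I would actually split the argument: handle the even lengths in pairs (two edges of the same even length $2m$ can be placed on a consecutive block together with $m-1$ ones, or more simply using a direct construction), and handle the odd lengths $\ge 3$ one at a time as above. After placing a staircase for one hard length on an initial block $B$, the remaining graph induced on $V(K_{2n})\setminus B$ is a set of consecutive integers, hence isomorphic to $K_{2n'}$ for the appropriate smaller $n'$, and the remaining list is $L$ with that hard length (and the spent ones) removed; the inductive hypothesis applies because $R$ stays even and $a_1$ only decreases by the exact amount $S$ decreases.

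An alternative, perhaps slicker, route is to invoke Remark~\ref{rem:F1+F2} directly: decompose $L$ as a disjoint union of sublists, each of which is either $\{(2m)^2\}$ (two copies of an even length), or $\{j^1, 1^{\lfloor(j-1)/2\rfloor}\}$ for an odd $j\ge 3$, or a leftover block of $1$'s; for each piece I exhibit an explicit perfect matching of a small complete graph $K_{2n_i}$ with $\ell'$-list (not just $\ell$-list) equal to the piece, making sure all the $\ell'$-values are at most $n_i$ so that $\ell'=\ell$ on each piece; then $F=F_1\cup(F_2+v_1)\cup\cdots$ has $\ell'(F)$ equal to $L$, and I finish by checking that each original length, being at most $t\le n$, still satisfies $\ell(u,w)=\ell'(u,w)=$ the claimed value in $K_{2n}$, which holds because every $\ell'$-value used is at most $n$. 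The even lengths being realizable in pairs is precisely where $R$ even gets used, and the odd lengths $\ge 3$ each consuming $\lfloor(j-1)/2\rfloor$ ones is where $a_1\ge S$ gets used; the residual $a_1-S\ge 0$ ones form their own trivial block.

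The main obstacle I expect is the bookkeeping in the even case: a single edge of even length $2m$ cannot be ``paid for'' purely in ones within a consecutive block without parity trouble, which is why one must use the $R$-even condition to process even lengths two at a time, and one has to exhibit a clean explicit small matching realizing $\{(2m)^2\}$ on a consecutive vertex block — e.g.\ on $K_{2(m+?)}$ — with all $\ell'$-values controlled. Getting that building block right (its size, and confirming $\ell=\ell'$ on it) and then verifying that gluing these blocks via Remark~\ref{rem:F1+F2} does not create any edge whose true length $\ell$ in the big graph $K_{2n}$ differs from the intended value — this last point is automatic as long as every intended length is $\le n$, which holds since all lengths are $\le t\le n$ — is the only place where care is genuinely needed. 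Everything else is a routine induction or a routine disjoint-union assembly.
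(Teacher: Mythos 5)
Your overall strategy --- realize $L$ as a disjoint union of small explicit blocks, each a perfect matching of some $K_{2n_i}$ whose $\ell'$-list equals the corresponding sublist, and glue them with Remark \ref{rem:F1+F2}, noting that $\ell=\ell'$ throughout because every length is at most $t\le n$ --- is exactly the paper's, and your gadget for an odd length $j\ge 3$ (the edge $\{0,j\}$ plus $(j-1)/2$ ones on $[0,j]$) together with your accounting of why $a_1\ge S$ suffices are both correct. The gap is in the even lengths. You process them in pairs, but the only pair gadget you propose realizes $\{(2m)^2\}$, i.e.\ two edges of the \emph{same} even length. The hypothesis that $R=\sum_j a_{2j}$ is even only says the \emph{total} number of even-length edges is even; it does not say each $a_{2j}$ is even, so your decomposition simply does not exist for a list such as $L=\{1^{n-2},2,4\}$ (here $R=2$ and $S=1\le n-2$, so the proposition applies, yet neither the $2$ nor the $4$ can be paired with a copy of itself). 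You correctly observed that a single even edge cannot be completed by ones on a consecutive block (it leaves an odd number of vertices in each parity class), but the right conclusion is that you need a gadget pairing two even edges of \emph{possibly different} lengths. This is what the paper supplies: $F_{x,y}=\{\{0,2x+2\},\{2x+1,2x+2y+3\}\}\cup\{\{2i+1,2i+2\}: i\in[0,x-1]\cup[x+1,x+y]\}$ is a perfect matching of $K_{2(x+y+2)}$ with $\ell'(F_{x,y})=\{1^{x+y},2x+2,2y+2\}$, and it consumes exactly $x+y=\lfloor\frac{(2x+2)-1}{2}\rfloor+\lfloor\frac{(2y+2)-1}{2}\rfloor$ ones, so the budget $a_1\ge S$ is respected exactly. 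With this one extra gadget your argument closes; without it, the proof fails whenever some even length occurs with odd multiplicity.
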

\begin{proof}
For every $x,y\geq 0$, define
$F_x=\{ \{0,2x+1\}\} \cup \{ \{ 2i+1,2i+2 \}: i \in [0,x-1] \}$
and
$F_{x,y}=\{ \{0,2x+2\}, \{2x+1, 2x+2y+3\}\} \cup \{ \{2i+1, 2i+2\} : i \in [0,x-1]\cup [x+1,x+y] \}$.
Then, $F_x$ is a perfect matching of $K_{2(x+1)}$ such that $\ell'(F_x)=\{1^x, 2x+1\}$,
while $F_{x,y}$ is a perfect matching of $K_{2(x+y+2)}$ such that $\ell'(F_{x,y})=\{1^{x+y},2x+2,2y+2\}$.
The statement follows from Remark \ref{rem:F1+F2}.
\end{proof}

\begin{ex}
Let $L=\{1^{21},2^7,4,5^2,10^4\}$. Hence, $|L|=35$ and so we are working in $K_{70}$.
Note that all the conditions of previous proposition are satisfied, since $R=12$ is even and
and $a_1\geq S= 21$.
A perfect matching of $K_{70}$ with the required properties is
$$F_{4,4} \cup (F_{4,4}+20) \cup (F_2+40)\cup (F_2+46)\cup (F_{0,1}+52) \cup (F_{0,0}+58)\cup (F_{0,0}+62)
\cup (F_{0,0}+66).$$
\end{ex}

\section{A complete solution for the two edge-lengths case}\label{sec:1x}

In this section we present the necessary and sufficient conditions for the existence
of a perfect matching $F$ of $K_{2n}$ such that $\ell(F)=\{x^{n-a}, y^a\}$ for any $a\in [1,n-1]$
and any $x,y \in [1, n]$ with $x\neq y$.
Note that the cases $x=y$ and $a=n$ have been already  solved in Proposition \ref{prop:xn}.
In other words, we prove Theorem \ref{thm:1x} and we begin with the case $\gcd(x,y,2n)=1$, where $x$ is even.

\begin{thm}\label{thm:xy}
Let $n,a,x,y$ be integers such that $x$ is even, $x \neq y$, $1 \leq x,y\leq  n$   and $1\leq a <n$.
Let $d=\gcd(x,2n)$ and suppose that $\gcd(d,y)=1$.
There exists a perfect matching $F$ of $K_{2n}$ such that $\ell(F)=\{ x^{n-a}, y^{a} \}$
if and only if one the following cases occurs:
\begin{itemize}
 \item[(1)] $d$ divides $n$ and $n-a$ is even;
 \item[(2)] $d$ does not divide $n$ and $n-a$ is an even integer such that $n-a\leq \frac{2n-d}{2}$.
\end{itemize}
\end{thm}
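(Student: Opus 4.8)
The plan is to prove necessity from the tools of Section~\ref{sec:preliminary} and sufficiency by a single explicit construction, after reducing the problem to $K_n$ by splitting $K_{2n}$ along parity.

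\emph{Necessity.} Since $x$ is even, $d=\gcd(x,2n)$ is even, so $\gcd(d,y)=1$ forces $y$ to be odd; hence $L=\{x^{n-a},y^a\}$ has exactly $n-a$ even entries, and Proposition~\ref{prop:EvenNumber} gives that $n-a$ is even (assumed in both cases). If moreover $d\nmid n$, then $d$ is a divisor of $2n$ with $d\ge 2$, so $d\nmid y$ and the multiples of $d$ in $L$ are exactly the $n-a$ copies of $x$; Proposition~\ref{prop:MPP} then yields $n-a\le\tfrac{2n-d}{2}$, the extra requirement of case~(2). As cases~(1) and~(2) merely record whether $d\mid n$, it remains to prove sufficiency in each.

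\emph{Sufficiency.} An edge of length $x$ joins two vertices of the same parity, while an edge of length $y$ ($y$ odd) joins vertices of opposite parity, so I would look for $F=F_e\cup F_o\cup F_y$ with $F_e$, $F_o$ the $x$-edges among the even, resp.\ odd, vertices and $F_y$ a set of $a$ length-$y$ edges across. Identifying each of $\{0,2,\dots,2n-2\}$ and $\{1,3,\dots,2n-1\}$ with $\Z_n$ via $2i\leftrightarrow i$ and $2j+1\leftrightarrow j$, an $x$-edge on either side becomes an edge of length $\xi:=x/2$ in $K_n$. Put $b=n-a$ and $t=(y-1)/2$, so that $y=2t+1$ and $0\le t<n$. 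The construction I propose is: choose a matching $M$ of $K_n$ with exactly $b/2$ edges of length $\xi$, set $A=\Z_n\setminus V(M)$, and take
\[
F=\bigl\{\{2u,2w\}:\{u,w\}\in M\bigr\}\cup\bigl\{\{2(u+t)+1,\,2(w+t)+1\}:\{u,w\}\in M\bigr\}\cup\bigl\{\{2c,\,2c+y\}:c\in A\bigr\},
\]
all vertices read modulo $2n$. The shift by $t$ on the odd side is chosen so that the odd vertices left free there, namely $\{2(c+t)+1:c\in A\}$, pair with the free even vertices $\{2c:c\in A\}$ through the length-$y$ edges $\{2c,2c+y\}$; one then checks directly that $F$ is a perfect matching of $K_{2n}$ with $\ell(F)=\{x^b,y^a\}$ (using $2\xi=x\le n$ and $y\le n$ to ensure these are the actual lengths), so everything reduces to producing $M$.

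To produce $M$ it suffices to find $b/2$ disjoint length-$\xi$ edges in $K_n$. These form the circulant $C_n(\xi)$, a disjoint union of $d'$ cycles of length $n/d'$ with $d'=\gcd(\xi,n)=d/2$, whose maximum matching has $\tfrac n2$ edges if $n/d'$ is even and $\tfrac{n-d'}{2}$ edges if $n/d'$ is odd. Since $n/d'$ is even exactly when $d=2d'$ divides $n$, the maximum equals $\tfrac n2\ge\tfrac b2$ in case~(1) and $\tfrac{n-d'}{2}=\tfrac12\cdot\tfrac{2n-d}{2}\ge\tfrac b2$ in case~(2); as $b$ is even, $M$ is obtained by deleting edges from a maximum matching until $b/2$ remain. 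The only real content is the parity reduction itself — splitting $F$ by parity, realizing both sides inside $\Z_n$, and spotting that shifting the odd side by $t=(y-1)/2$ makes the leftover vertices matchable by $y$-edges; once that is in place, checking disjointness and coverage is routine, and it is reassuring that the threshold $\tfrac{2n-d}{2}$ of case~(2) resurfaces here as the maximum-matching bound for $C_n(\xi)$ rather than being imposed by hand.
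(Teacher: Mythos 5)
Your proof is correct, but it takes a genuinely different route from the paper's. For necessity you use exactly the paper's tools (Proposition~\ref{prop:EvenNumber} for the parity of $n-a$, Proposition~\ref{prop:MPP} for the bound $n-a\le\frac{2n-d}{2}$ when $d\nmid n$), so that part coincides. For sufficiency the paper works directly in $\Z_{2n}$: it first shows the $2n$ residues $ix+jy$ ($i\in[0,\frac{2n}{d}-1]$, $j\in[0,d-1]$) are pairwise distinct, then tiles $\Z_{2n}$ with translates (by multiples of $2y$) of three base matchings $A$, $B$, $C$ built from the progressions $\{ix,ix+y\}$, tuning the count of $x$- versus $y$-edges via the division $n-a=2\bar nq+2r$, with an extra matching $D$ of $y$-edges absorbing the leftover row when $d\nmid n$. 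You instead exploit the parity dichotomy (even-length edges stay inside a parity class, odd-length edges cross), identify each parity class with $K_n$ so that $x$-edges become length-$\frac{x}{2}$ edges of the circulant $C_n(\frac{x}{2})$, a union of $\frac{d}{2}$ cycles, and reduce everything to the elementary maximum-matching count $\frac n2$ or $\frac{2n-d}{4}$ in that circulant; the shift by $t=\frac{y-1}{2}$ on the odd copy is the small trick that lets the leftover vertices be paired by $y$-edges. I checked the details (the translated $M$ still consists of length-$\frac{x}{2}$ edges, the three edge families are pairwise vertex-disjoint, $V(M)\cup A=\Z_n$ covers both parity classes, and $2n/d$ odd precisely when $d\nmid n$ so the cycle lengths have the right parity) and found no gaps. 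Your approach buys a conceptual explanation of the threshold $\frac{2n-d}{2}$ -- it is literally twice the maximum matching number of $C_n(x/2)$ -- whereas the paper's construction verifies it only a posteriori by edge counting; the paper's approach, on the other hand, is the template that generalizes to its odd-$x$ counterpart (Theorem~\ref{thm:x odd}), where the parity split is unavailable.
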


\begin{proof}
Since $x$ is even, $d$ is also even: by Proposition \ref{prop:EvenNumber}  we may assume that $n-a$  is even.
Furthermore, notice that the integers $ix+jy$, with $i \in [0,\frac{2n}{d}-1]$ and $j\in [0,d-1]$ are pairwise distinct modulo $2n$.
In fact, suppose $i_1x +j_1y \equiv i_2 x+ j_2 y \pmod{2n}$ for some $i_1,i_2 \in [0, \frac{2n}{d}-1]$ and some $j_1,j_2 \in [0,d-1]$.
Then, $j_1 y \equiv j_2 y \pmod{d}$, whence $j_1 \equiv j_2 \pmod{d}$ as $\gcd(d,y)=1$.
We obtain $j_1=j_2$ and so $i_1x  \equiv i_2 x \pmod{2n}$. It follows that $i_1\equiv i_2 \pmod{\frac{2n}{d}}$,
whence $i_1=i_2$.

If $d$ divides $n$, set $\bar n =\frac{n}{d}$; otherwise,  set $\bar n =\frac{2n-d}{2d}$  since
$\frac{2n}{d}$ is an odd integer.
Let $q,r$ be two integers such that $n-a= 2 \bar n q  + 2r$ where $0\leq r < \bar n$.
Working modulo $2n$, take the three matchings:
$$\begin{array}{rcl}
A &= &\left\{ \{2ix, (2i+1)x \}, \{2ix+y, (2i+1)x+y\} : i \in [0,r-1] \right\} \cup \\
&& \left\{\{ ix, ix+y\}: i \in [2r,2 \bar n -1]\right\},\\[2pt]
B &= & \left\{\{ 2ix, (2i+1)x \},\{ 2ix+y, (2i+1)x+y \} : i \in [0, \bar n -1] \right\},\\[2pt]
C & =& \left\{ \{ix, ix+y \}: i \in [0,2\bar n -1] \right\}.
  \end{array}$$
Notice that $V(A)=V(B)=V(C)=\{ix, ix+y: i \in [0,2\bar n -1] \}$.
Furthermore, $\ell(A)=\{ x^{2r},y^{2(\bar n-r)} \}$,
$\ell(B)=\{x^{2\bar n} \}$ and
$\ell(C)=\{y^{2\bar n} \}$.

If we are in the case (1), take
$$F=A \cup \bigcup_{k=1}^{q} (B + 2ky)\cup \bigcup_{k=q+1}^{\frac{d-2}{2}} (C+2ky).$$
In this way  we get $2r+2q\bar n = n-a$ edges of length $x$ and
$2(\bar n -r)+2\left(\frac{d}{2}-q-1\right)\bar n=a$ edges of length $y$ (note that $q<\frac{d}{2}$).
From the previous argument it follows that $V(F)=[0,2n-1]$.
We conclude that $F$ is a perfect matching of $K_{2n}$ such that $\ell(F)=\{x^{n-a}, y^{a}\}$.

In case (2), Proposition \ref{prop:MPP} implies $n-a\leq \frac{2n-d}{2}$.
Define the additional matching
$$D=\left\{\left\{\left(\frac{2n}{d}-1\right) x + 2jy, \left(\frac{2n}{d}-1\right)x+(2j+1)y\right\}:  j \in \left[0,\frac{d-2}{2}\right]\right\}.$$
We have $V(D)=\left\{\left(\frac{2n}{d}-1\right)x+jy: j \in [0,d-1] \right\}$ and $\ell(D)=\left\{y^{\frac{d}{2}}\right\}$.

If $r=0$, take
$$F=\bigcup_{k=0}^{q-1} (B + 2ky)\cup \bigcup_{k=q}^{\frac{d-2}{2}} (C+2ky)\cup D.$$
In this way  we get $2q\bar n = n-a$ edges of length $x$ and
$2\left(\frac{d}{2}-q\right)\bar n+ \frac{d}{2}= \frac{d}{2}(1+2\bar n)-(n-a)= a$ edges of length $y$
(note that $q\leq \frac{d}{2}\leq a$).
If $r>0$, take
$$F=A \cup \bigcup_{k=1}^{q} (B + 2ky)\cup \bigcup_{k=q+1}^{\frac{d-2}{2}} (C+2ky)\cup D.$$
In this way  we get $2r+2q\bar n = n-a$ edges of length $x$ and
$2(\bar n -r)+2\left(\frac{d}{2}-q-1\right)\bar n+ \frac{d}{2}= \frac{d}{2}(1+2\bar n)-(n-a)= a$ edges of length $y$
(note that $q+1\leq \frac{d}{2}\leq a$).
Since $2\bar n-1=\frac{2n}{d}-2$, in  both cases we have $V(F)=[0,2n-1]$.
We conclude that $F$ is a perfect matching of $K_{2n}$ such that $\ell(F)=\{x^{n-a}, y^{a}\}$.
\end{proof}

\begin{ex}
To obtain a perfect matching $F$ of $K_{40}$ such that $\ell(F)=\left\{5^6, 12^{14}\right\}$, write
$x=12$, so $d=\gcd(12,40)=4$ divides $n=20$, hence we are in Case (1) of previous theorem.
Following the notation of the proof, write  $n-a= 14 = 2\cdot 5 \cdot 1 + 2\cdot 2$  and take $F=A\cup(B+10)$ where
$$\begin{array}{rcl}
A & =&  \{\{24i,24i+12\}, \{24i+5,24i+17\}: i \in[0,1]\} \cup \{ \{12i,12i+5\} : i \in [4,9]\}, \\
B &=& \{\{24i, 24i+12\}, \{24i+5, 24i+17\}: i \in [0,4]\}.
    \end{array}$$
Hence,
$$\begin{array}{rcl}
F & =&  \{\{0,12\}, \{24,36\},  \{5,17\}, \{29,1\},  \{8, 13\}, \{20,25\}, \{32,37\}, \{4,9\},\{16,21\}, \\
&& \{28,33\} \}\cup  \{ \{10,22\}, \{34,6\}, \{18,30\}, \{2,14\},\{26,38\}, \{15, 27\}, \{39, 11\}, \\
&& \{23, 35\}, \{7, 19\}, \{31, 3\}  \}.
    \end{array}$$
Note that in this case $C$ is empty.
\end{ex}

\begin{ex}
To obtain a perfect matching $F$ of $K_{42}$ such that $\ell(F)=\left\{7^{13}, 12^{8}\right\}$, write
$x=12$. Since $d=\gcd(12,42)=6$ does not divide $n=21$,  we are in Case (2) of Theorem \ref{thm:xy}.
Following the notation of the proof, write
$n-a= 8 = 2\cdot 3 \cdot 1 + 2\cdot 1$ and take $F=A \cup (B+14)\cup (C+28) \cup D$ where
$$\begin{array}{rcl}
A & =&  \{\{24i,24i+12\}, \{24i+7,24i+19\}: i=0\} \cup \{ \{12i,12i+7\} : i \in [2,5]\}, \\
B &=& \{\{24i, 24i+12\}, \{24i+7, 24i+19\}: i \in [0,2]\},\\
C &=& \{\{12i, 12i+7\}: i \in [0,5]\},\\
D &=& \{\{14j+30, 14j+37\}: j \in [0,2]\}.
    \end{array}$$
That is, take
$$\begin{array}{rcl}
F &= & \{  \{ 0,12\},\{ 7,19\}, \{24,31\},\{36,1\},\{6,13\}, \{18,25\}\}\cup
\{ \{14, 26\}, \{38, 8\}, \\
&& \{20, 32\},\{ 21, 33\}, \{3, 15\}, \{27, 39\} \}\cup
\{ \{28,35\}, \{40,5\}, \{10,17\}, \{22,29\}, \\
&& \{34,41\},\{4,11\} \} \cup \{\{30,37\},\{2,9\},\{16,23\}\}.
\end{array}$$
\end{ex}

We now consider the case when $x$ is odd.

\begin{lem}\label{large_a}
Let $n,x,a$ be three integers such that $x$ is odd, $1< x< n$ and $\frac{n}{2}\leq a<n$.
There exists a perfect matching $F$ of $K_{2n}$ such that $\ell(F)=\{1^a, x^{n-a}\}$.
\end{lem}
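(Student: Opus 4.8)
The goal is to construct a perfect matching of $K_{2n}$ using $a$ edges of length $1$ and $n-a$ edges of length $x$, under the assumption $\frac{n}{2}\le a<n$ (so that there are relatively few edges of the ``long'' length $x$). The natural idea is to first take care of the $n-a$ edges of length $x$ using a clean block structure, and then fill in everything that remains with edges of length $1$ (i.e. consecutive pairs). Since $a\ge n-a$, there is ``enough room'' to absorb the leftover vertices into $1$-edges.

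\textbf{Construction.} First I would dispose of the length-$x$ edges. A block of $2x$ consecutive vertices $[kx,(k+1)x-1]$ can be perfectly matched either by $x$ edges of length $1$ (pairing $2i$ with $2i+1$) or, when combined with the next block, by the $x$ ``straight'' edges $\{i, i+x\}$. More flexibly, using two consecutive blocks of size $x$ one can produce any number $j\in[0,x]$ of length-$x$ edges together with $x-j$ length-$1$ edges on the same $2x$ vertices, by choosing how many of the positions $i\in[0,x-1]$ are matched as $\{i,i+x\}$ and matching the rest within each block as consecutive pairs --- but one must be careful about parity, since a ``broken'' block of odd size $x$ leaves an odd number of vertices. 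The cleanest route is: write $n-a = q x + s$ with $0\le s<x$; use $q$ full ``straight'' double-blocks to get $qx$ length-$x$ edges, then handle the residual $s$ edges of length $x$ together with a short interval of length-$1$ edges, exactly in the spirit of the matchings $F_{x}$ and $F_{x,y}$ appearing in the asymptotic proposition above. Finally, whatever initial segment of vertices is not yet covered gets tiled by consecutive pairs $\{2i,2i+1\}$, which are length-$1$ edges; the hypothesis $a\ge n-a$ guarantees this leftover segment is long enough to supply all remaining $1$-edges, and an explicit count confirms the total is $a$ ones and $n-a$ copies of $x$. Throughout, Remark \ref{rem:F1+F2} lets me assemble these pieces on disjoint vertex intervals and then read $\ell(F)$ off $\ell'(F)$, noting $1<x<n$ so that $x=\ell'=\ell$ and no wraparound collisions occur.

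\textbf{Main obstacle.} The delicate point is parity bookkeeping when $x$ is odd. A single block of $x$ consecutive vertices has odd size, so one cannot cleanly ``cut'' the line into independent length-$x$ blocks; length-$x$ edges must be handled in pairs of blocks (giving an even count within each gadget) or via a telescoping pattern like $\{0,x\},\{1,x+1\},\dots$ that consumes $2x$ vertices. One then has to check that the parities of $n-a$ and of the residual $s$ are compatible with the gadget being built, possibly splitting into the cases $n-a$ even versus odd (and correspondingly adjusting which gadget carries the odd leftover). Verifying that the vertex sets of all the gadgets are disjoint and exhaust $[0,2n-1]$, and that the edge-length multiset is exactly $\{1^a,x^{n-a}\}$, is then a routine but slightly fussy computation. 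I expect no genuinely hard step beyond this case analysis; the inequality $a\ge \frac n2 > \frac{n-a}{\,}$ is what makes the ``fill with $1$'s'' strategy succeed, and it is used precisely to ensure the short interval carrying the residual length-$x$ edges has enough adjacent room.
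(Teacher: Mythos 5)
Your overall plan --- straight double-blocks of $2x$ consecutive vertices carrying the bulk of the length-$x$ edges, a residual gadget for $n-a \bmod x$, and dominoes $\{2i,2i+1\}$ everywhere else --- is exactly how the paper handles the cases where the residual $t$ in $n-a=sx+t$ is zero or odd, and your remark about ``adjusting which gadget carries the odd leftover'' corresponds to the paper's further trick of shifting the residual edges by one block of length $x$ when $t$ is even. But there is a genuine gap: your framework insists that every edge $\{u,w\}$ you place satisfies $\ell(u,w)=\ell'(u,w)=|u-w|$, i.e.\ all length-$x$ edges have the form $\{u,u+x\}$ and all length-$1$ edges the form $\{v,v+1\}$, assembled on disjoint subintervals of $[0,2n-1]$ with ``no wraparound.'' This is impossible in general. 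Take $n=12$, $x=11$, $a=6$, so $L=\{1^6,11^6\}$ in $K_{24}$ (here $s=0$ and $t=6$ is even). Using the invariant that the number of matching edges crossing the cut between $j-1$ and $j$ must be congruent to $j$ modulo $2$, a short case analysis shows that after placing any six pairwise disjoint edges $\{u_i,u_i+11\}$ with $u_i\in[0,12]$, the twelve uncovered vertices always contain a maximal interval of odd length, which cannot be tiled by length-$1$ edges $\{v,v+1\}$; so no perfect matching with this list consists solely of edges having $\ell'\in\{1,11\}$. Every solution must use edges $\{u,w\}$ with $|u-w|=2n-x=13$.

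The missing idea is precisely the paper's Subcase III.B: when $s=0$, $t$ is even and positive, and $x>2(n-x)$ (so $x>\tfrac{2n}{3}$ and there is no room inside $[0,2n-1]$ to shift the residual block), one must realize some of the length-$x$ edges as edges of the form $\{2iz,\,2(i+1)z+x\}$ with $z=n-x$, whose $\ell'$ equals $2n-x$ but whose length is still $x$. These ``long way around'' edges are what break the parity obstruction, and checking that the leftover intervals $H$, $I$, $J$ are then all of even size is where the real work of the lemma lies. So the lemma cannot be proved entirely by concatenating interval gadgets and reading $\ell$ off $\ell'$; a second, genuinely circular construction is required for large $x$.
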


\begin{proof}
Let $b=n-a$, then $a\geq b>0$ and $b\leq \frac{n}{2}$. Let $s$ and $t$ be nonnegative integers such that $b=sx+t$,
where $0\leq t< x$. Consider separately three cases.

\noindent Case I: $t=0$.\\
Let $A=\{\{2ix+j,(2i+1)x+j\}: i\in [0,s-1],\; j\in [0,x-1]\}$. Note that $A$ is a matching containing $b$ edges
of length $x$.
Vertices which are not in $V(A)$ make the interval
$H=[2sx,2n-1]$ of cardinality $2a$, from which we can obviously construct a matching $B$ with $a$ edges of length $1$.
Thus we can take $F=A\cup B$.

\noindent Case II: $t$ is odd.\\
Define two vertex disjoint matchings:
$$\begin{array}{rcl}
A &=& \{\{2ix+j,(2i+1)x+j\}: i\in [0,s-1],\; j\in [0,x-1]\},\\[2pt]
B &=& \{\{2sx+j,(2s+1)x+j\}: j\in [0,t-1]\}.
\end{array}$$
Notice that  $A\cup B$ contains $b$ edges of length $x$.
Vertices which are not in $V(A\cup B)$ make two disjoint intervals:
$H=[2sx+t,2sx+x-1]$ and $I=[2sx+x+t,2n-1]$.
Both $H$ and $I$ are nonempty and each contains an even number of consecutive integers, so it is immediate to construct
a matching $C$ containing $\frac{|H|+|I|}{2}=a$ edges of length $1$. Then, $F=A\cup B\cup C$ is a
perfect  matching of $K_{2n}$ such that $\ell(F)=\{1^a, x^{n-a}\}$.

\noindent Case III: $t$ is even and $t\geq 2$.\\
Let $z$ be a positive integer such that $n=x+z$, and write $k=\left\lfloor\frac{x}{2z}\right\rfloor$.
Consider two subcases.

\noindent Subcase III.A: $s>0$ or $k=0$.\\
Similarly to the above cases, define the following three vertex disjoint matchings:
$$\begin{array}{rcl}
A &=& \{\{2ix+j,(2i+1)x+j\}:  i\in [0,s-1],\; j\in [0,x-1]\},\\[2pt]
B &=& \{\{2sx,(2s+1)x\}\},\\[2pt]
C &=& \{\{(2s+1)x+j,(2s+2)x+j\}:  j\in [1,t-1]\}.
\end{array}$$
Note that $A\cup B\cup C$ is a set of $b$ edges, each of length $x$, since $2z+x\equiv -x\pmod{2n}$.
Vertices which are not in $V(A\cup B\cup C)$ make three disjoint intervals:
$H=[2sx+1,2sx+x-1]$, $I=[2sx+x+t,2sx+2x]$ and $J=[2sx+2x+t,2n-1]$.
It is trivial that both $H$ and $I$ are nonempty. Now we show that $2sx+2x+t<2n-1$.
If $s>0$, then $b=sx+t\geq x+t$. On the other hand $b\leq \frac{n}{2}$, whence $x\leq sx\leq \frac{n}{2}-t$.
So,
$$2sx+2x+t=(sx+t)+sx+2x=b+sx+2x\leq \frac{n}{2}+\left(\frac{n}{2}-t\right)+2\left(\frac{n}{2}-t\right)< 2n-1.$$
Suppose now $s=k=0$. Since $s=0$, we have $b=t<x$; since $k=0$, we get $x<2(n-x)$ and hence $x<\frac{2n}{3}$.
So,
$$2sx+2x+t=2x+t\leq 2x+x-1<2n-1.$$
Hence, we have proved that $J$ is nonempty too.
Each of $H$, $I$ and $J$ has even cardinality, so we can get
a matching $D$ containing $\frac{|H|+|I|+|J|}{2}=a$ edges of length 1. Then it is sufficient to take $F=A\cup B\cup C\cup D$.

\noindent Subcase III.B: $s=0$ and $k>0$.\\
Thus $x>2z$ and, since $n=x+z$, then $2n<3x$. Moreover, $b=t$ is even and $b\leq \frac{n}{2} < \frac{3x}{4}$.
Let $b=2pz+r$, where $0\leq r<2z$ and $p\geq 0$. Then $r$ is even.
Define the following vertex disjoint matchings:
$$\begin{array}{rcl}
A &=&\{\{2iz,2(i+1)z+x\}:  i\in [0,p-1]\},\\[2pt]
B &=& \{\{2iz+j,2iz+x+j\}:  i\in [0,p-1],\; j\in [1,2z-1]\},\\[2pt]
C &=& \{\{2pz+j,2pz+x+j\}:  j\in [1,r-1]\},
\end{array}$$
and
$$D = \{\{2pz,2(p+1)z+x\}\}$$
if $r>0$, $D=\emptyset$ otherwise.
The set $A\cup B\cup C\cup D$ contains $b$ edges, each of length $x$.
Notice that vertices which are not in $V(A\cup B\cup C\cup D) $ make three disjoint intervals:
$H=[b,x]$, $I=[x+b-h+1,2(p+1)z+x-h]$ and $J=[2(p+1)z+x+1,2n-1]$, where $h=0$ if $r=0$ and $h=1$ otherwise.
Each of these intervals contains an even number of consecutive integers, $H$ and $I$ are nonempty while $J$ is empty only
when $a=b=x-1=2$. Thus,
a matching $E$ containing $\frac{|H|+|I|+|J|}{2}=a$ edges of length 1 can be easily constructed.
In this case, take $F=A\cup B\cup C\cup D\cup E$.
\end{proof}

\begin{ex}
Consider the list $L=\{1^5,7^4\}$. Following the notation of the proof of the previous lemma we get:
$x=7$, $n=9$, $a=5$, $b=4$, hence $s=0$ and $t=4$. This means that to construct $F$ Case III is applied.
Then $z=2$ and $k=1$.  By Subcase III.B, $p=1$, $r=0$ and
$A=\{\{0,11\}\}$, $B=\{\{1,8\},\{2,9\},\{3,10\}\}$, $C=D=\emptyset$, $H=[4,7]$, $I=[12,15]$, $J=[16,17]$.
Thus $F=A\cup B\cup E$, where $E=\{\{4,5\},\{6,7\},\{12,13\},\{14,15\},\{16,17\}\}$.
\end{ex}

\begin{prop} \label{small_a}
Let $n,x,a$ be three integers such that $1< x< n$, $\gcd(x,2n)=1$ and $1\leq a<n$.
There exists a perfect matching $F$ of $K_{2n}$ such that $\ell(F)=\{1^a, x^{n-a}\}$.
\end{prop}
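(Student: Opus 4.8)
The plan is to deduce Proposition \ref{small_a} from Lemma \ref{large_a} by splitting on the size of $a$, using a length-transforming relabeling to handle the range that the lemma does not cover. First I would record that, since $\gcd(x,2n)=1$ and $2n$ is even, $x$ is necessarily odd, and (for the statement to be non-vacuous) $1<x<n$ forces $n\ge 4$. If $\frac n2\le a<n$, there is nothing new to do: the claim is exactly Lemma \ref{large_a}, whose hypotheses ($x$ odd, $1<x<n$) are met. So from then on I would assume $1\le a<\frac n2$, equivalently $b:=n-a>\frac n2$; in this regime there are many edges of length $x$, which is precisely the situation Lemma \ref{large_a} does not address, so a genuinely new ingredient is needed.

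That ingredient is the relabeling $\psi\colon u\mapsto xu\pmod{2n}$ of $V(K_{2n})=\mathbb Z_{2n}$, which is a bijection because $\gcd(x,2n)=1$. Let $x'\in[1,2n-1]$ be the inverse of $x$ modulo $2n$ and put $x''=\min(x',2n-x')$. Then $\psi$ sends an edge whose endpoints differ by $\pm 1$ to an edge whose endpoints differ by $\pm x$, hence of length $x$ since $1<x<n$; and it sends an edge of length $x''$, i.e.\ one with difference $\equiv\pm x'\pmod{2n}$, to an edge with difference $\equiv\pm x'x\equiv\pm1\pmod{2n}$, hence of length $1$. Consequently, if I can produce a perfect matching $G$ of $K_{2n}$ with $\ell(G)=\{1^{b},(x'')^{a}\}$, then $F=\psi(G)$ is a perfect matching of $K_{2n}$ with $\ell(F)=\{x^{b},1^{a}\}=\{1^a,x^{n-a}\}$, as required.

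To build $G$ I would invoke Lemma \ref{large_a} a second time, now with $x''$ in the role of $x$ and $b=n-a$ in the role of $a$: since $\frac n2<b<n$, the multiplicity hypothesis on the $1$'s holds, and the lemma yields a perfect matching with list $\{1^{b},(x'')^{n-b}\}=\{1^{b},(x'')^{a}\}$. The one point that genuinely has to be verified — and the only place where the coprimality of $x$ with $2n$ is used in an essential way — is that $x''$ again satisfies $1<x''<n$ (its oddness being automatic, as $x'$ is coprime to $2n$ and hence odd, so that both $x'$ and $2n-x'$ are odd). Here $x''=1$ would force $x'\in\{1,2n-1\}$, hence $x\in\{1,2n-1\}$, which is impossible for $1<x<n$; and $x''=n$ would force $x'=n$, which is impossible since $\gcd(n,2n)=n>1$ while $x'$ is a unit modulo $2n$. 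Thus $1<x''<n$, and the construction goes through. I do not expect any serious obstacle beyond this short verification, since all of the combinatorial work is already carried out in Lemma \ref{large_a}; the only real content is the observation that multiplication by $x$ modulo $2n$ interchanges the roles of length-$1$ and length-$x$ edges.
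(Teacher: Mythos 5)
Your proposal is correct and follows essentially the same route as the paper: reduce to Lemma \ref{large_a} when $a\geq n-a$, and otherwise apply Lemma \ref{large_a} to the list $\{1^{n-a},(x'')^a\}$, where $x''$ is (the reduced representative of) the inverse of $x$ modulo $2n$, and then relabel via $i\mapsto xi$. Your explicit verification that $1<x''<n$ is a welcome detail that the paper leaves implicit.
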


\begin{proof}
By Lemma \ref{large_a}, it remains to consider the case when $1\leq a<n-a$.
Since $\gcd(x,2n)=1$,  there exists an integer $y$ such that $1<y<2n$ and $xy\equiv 1\pmod{2n}$.
By Lemma \ref{large_a}, $K_{2n}$ contains a perfect matching $F'$ such that $\ell(F')=\{1^{n-a}, y^{a}\}$
if $y\leq n$, or $\ell(F')=\{1^{n-a}, (2n-y)^{a}\}$ if $n <y<2n$. In both cases,
we apply the relabeling $i\mapsto xi$ to all vertices of $F'$ to get 
a perfect matching $F$ of $K_{2n}$ such that $\ell(F)=\{1^a, x^{n-a}\}$.
\end{proof}

\begin{lem}\label{large_an}
Let $n$ and $a$ be two integers such that $n$ is odd, $a$ is even  and $2\leq a<n$.
There exists a perfect matching $F$ of $K_{2n}$ such that $\ell(F)=\{1^a, n^{n-a}\}$.
\end{lem}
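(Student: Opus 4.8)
The plan is to give an explicit perfect matching: use the $n-a$ edges of length $n$ to peel off a block of vertices, and fill in everything else with edges of length $1$. Since $\ell(i,i+n)=\min(n,2n-n)=n$ for every $i\in[0,n-1]$, I would first take
$$A=\{\{i,i+n\}: i\in[a,n-1]\},$$
a set of $n-a$ pairwise disjoint edges, each of length $n$ (here $n-a\geq 1$ because $a<n$). The vertices covered by $A$ are precisely $[a,n-1]\cup[n+a,2n-1]$, so the uncovered vertices form the two disjoint intervals $[0,a-1]$ and $[n,n+a-1]$.

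Now the hypothesis that $a$ is even enters: each of these two intervals consists of an even number of consecutive integers, so I can match it up by consecutive pairs. Explicitly, set
$$B=\left\{\{2j,2j+1\},\ \{n+2j,n+2j+1\} : j\in\left[0,\frac{a}{2}-1\right]\right\},$$
which is a set of $a$ pairwise disjoint edges, each of length $1$ (recall $\ell(u,u+1)=1$ since $2n\geq 2$), and which covers exactly $[0,a-1]\cup[n,n+a-1]$. Then $F=A\cup B$ is a set of $n$ pairwise disjoint edges with $V(F)=[0,2n-1]$, hence a perfect matching of $K_{2n}$, and $\ell(F)=\{1^a,n^{n-a}\}$ as wanted.

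There is no real obstacle here: the statement follows from this one direct construction, in the same spirit as Proposition \ref{prop:xn}. The only points that genuinely need checking are that the two leftover intervals have even cardinality — which is exactly where the hypothesis $a$ even is used — together with the trivial verifications that $A$ and $B$ are disjoint and jointly cover $[0,2n-1]$ and that the claimed edge-lengths are correct. (Alternatively one could decompose $K_{2n}$ as a union of smaller matchings and invoke Remark \ref{rem:F1+F2}, but the explicit description above is cleaner.)
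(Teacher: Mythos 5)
Your construction is correct and is essentially the paper's own proof: both take $n-a$ diameter edges $\{i,i+n\}$ indexed by a block of consecutive $i$'s and then pair up the two leftover intervals of $a$ consecutive vertices each (which is where $a$ even is used) by length-$1$ edges; the only difference is a trivial relabeling of which block of indices carries the length-$n$ edges. No issues.
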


\begin{proof}
Define the matching
$A = \{\{ j, n+j\}: j\in [0,n-a-1]\}.$
Clearly,  $A$ contains $n-a$ edges of length $n$. Vertices which are not in $V(A)$ make two disjoint intervals:
$H=[n-a, n-1]$ and $I=[2n-a, 2n-1]$.
Both $H$ and $I$ contain $a$ consecutive integers, so it is immediate to construct
a matching $B$ consisting of $a$ edges of length $1$. Then, $F=A\cup B$ is 
a perfect matching $F$ of $K_{2n}$ such that $\ell(F)=\{1^a, n^{n-a}\}$.
\end{proof}

The following result extends \cite[Theorem 2.2]{KSC} by replacing the hypothesis $\gcd(x_i,2n)=1$
with the weaker hypothesis $\gcd(x_i,2)=1$.

\begin{lem} \label{necessary}
Let $L$ be a list of length $n$ such that each its element $x_i$ is an odd positive integer and does not exceed $n$.
If $K_{2n}$ contains a perfect matching $F$ such that $\ell(F)=L$ then there exist $\varepsilon_1,\varepsilon_2,\ldots,\varepsilon_n$
such that $\varepsilon_i\in \{-1,1\}$ and $\sum\limits_{i=1}^{n} \varepsilon_i x_i \equiv n\pmod {2n}$.
\end{lem}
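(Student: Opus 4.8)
The plan is to exploit the parity structure of the complete graph $K_{2n}$, just as in Proposition~\ref{prop:EvenNumber}, but now tracking more refined information. Label the vertices $0,1,\ldots,2n-1$ and split them into the set $E$ of even labels and the set $O$ of odd labels, each of size $n$. Since every $x_i$ is odd, each edge $\{u,w\}$ of $F$ with $\ell(u,w)=x_i$ has one end in $E$ and one end in $O$; in other words, $F$ is a perfect matching between $E$ and $O$. For each edge $e=\{u,w\}$ with $u\in E$ and $w\in O$, I would record the signed quantity $u-w$, and observe that $u-w\equiv \pm x_i\pmod{2n}$ according to whether $\ell'(u,w)=x_i$ or $\ell'(u,w)=2n-x_i$; thus there is a choice of sign $\varepsilon_i\in\{-1,1\}$ with $u-w\equiv \varepsilon_i x_i\pmod{2n}$.

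The key step is then to sum over all edges. Because $F$ matches $E$ to $O$ bijectively, summing $u-w$ over all edges gives $\sum_{u\in E}u-\sum_{w\in O}w$. Now $\sum_{u\in E}u=2(0+1+\cdots+(n-1))=n(n-1)$ and $\sum_{w\in O}w=1+3+\cdots+(2n-1)=n^2$, so the difference is $n(n-1)-n^2=-n\equiv n\pmod{2n}$. Comparing the two evaluations of $\sum_e (u-w)$ yields $\sum_{i=1}^n \varepsilon_i x_i\equiv -n\equiv n\pmod{2n}$, which is exactly the claimed congruence (note $n\equiv -n\pmod{2n}$). This is essentially the only content of the lemma, and the argument is short; the one point requiring a little care is the bookkeeping of which end of each edge is even and which is odd, and the verification that $\ell'(u,w)\in\{x_i,2n-x_i\}$ forces $u-w\equiv\pm x_i\pmod{2n}$ rather than some other residue — but this is immediate from the definition $\ell(u,w)=\min(|u-w|,2n-|u-w|)$.

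I do not anticipate a serious obstacle here: the statement is a clean necessary condition analogous to \cite[Theorem~2.2]{KSC}, and the generalization from $\gcd(x_i,2n)=1$ to $\gcd(x_i,2)=1$ costs nothing in this direction of the argument, since invertibility of the $x_i$ modulo $2n$ is never used. The mild subtlety worth stating explicitly is simply that the whole computation takes place modulo $2n$, so that both the ambiguity $x_i$ versus $2n-x_i$ in each edge-length and the identity $n\equiv -n\pmod{2n}$ are harmless. If anything, the only thing to be careful about in the write-up is to phrase the sign choice uniformly, e.g.\ by fixing for each edge the representative with the even end minus the odd end, so that the $\varepsilon_i$ are well defined.
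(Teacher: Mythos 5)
Your argument is correct and is essentially identical to the paper's proof: both observe that each odd-length edge joins an even-labelled vertex to an odd-labelled one, write the signed difference of endpoints as $\varepsilon_i x_i$ modulo $2n$, and sum over the perfect matching using the totals $n^2$ and $n^2-n$ for the odd and even labels (your opposite sign convention is harmless since $n\equiv -n\pmod{2n}$). No gaps.
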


\begin{proof}
Each edge of $F$ has odd length $x_i$ so one its end-vertex, $u_i$, has odd label and the other, $w_i$, even.
Clearly, $u_i-w_i\equiv \varepsilon_i x_i \pmod{2n}$ for some $\varepsilon_i=\pm1$.
Set $s=\sum\limits_{i=1}^{n}  \varepsilon_i x_i $. Since the sum of all odd labels of vertices in $K_{2n}$ is equal to $n^2$ and
the sum of all vertices with even labels is $n^2-n$, we obtain $s\equiv n\pmod {2n}$.
\end{proof}

\begin{thm}\label{thm:x odd}
Let $n,x,y,a$ be four integers such that $x$ and $y$ are odd, $x\neq y$, $1\leq x,y\leq n$ and $1\leq a<n$. Let $d_x=\gcd(x,2n)$
and $d_y=\gcd(y,2n)$, and suppose that $\gcd(d_x,d_y)=1$.
There exists a perfect matching $F$ of $K_{2n}$ such that $\ell(F)=\{ x^{n-a}, y^a\}$ if and only if both conditions are satisfied:
\begin{itemize}
\item[(1)] $a$ is even or $a\geq d_x$;
\item[(2)] $n-a$ is even or $n-a\geq d_y$.
\end{itemize}
\end{thm}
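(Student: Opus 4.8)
The plan is to derive necessity from Lemma~\ref{necessary} and to prove sufficiency by an explicit construction inside the ``two-length grid'' that $x$ and $y$ place on $\Z_{2n}$.

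\emph{Necessity.} Since every entry of $L=\{x^{n-a},y^a\}$ is odd, Lemma~\ref{necessary} gives signs $\varepsilon_i\in\{-1,1\}$ with $\sum_i\varepsilon_i\ell_i\equiv n\pmod{2n}$, where $\ell_i$ runs over $L$; collecting the signs on the copies of $x$ and of $y$ yields $Px+Qy\equiv n\pmod{2n}$ with $P\equiv n-a\pmod 2$, $Q\equiv a\pmod 2$, $|P|\le n-a$ and $|Q|\le a$. As $x$ is odd, $d_x$ is odd, so $d_x\mid 2n$ forces $d_x\mid n$; reducing modulo $d_x$ gives $Qy\equiv 0\pmod{d_x}$, and $\gcd(d_x,y)$ divides $\gcd(d_x,d_y)=1$, so $d_x\mid Q$. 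If $a$ is odd then $Q$ is odd, hence nonzero, so $d_x\le|Q|\le a$; this is (1). Reducing modulo $d_y$ in the same way gives (2).

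\emph{Sufficiency.} Assume (1) and (2). If $d_x=d_y=1$, the list consists of $n$ integers coprime to $2n$ and Theorem~\ref{thm:KSC} applies. The involution $(x,n-a)\leftrightarrow(y,a)$ fixes $L$ and swaps (1) and (2), so we may assume $d_x\ge d_y$, hence $d_x\ge 3$. Put $m=n/d_x$. Since $\gcd(d_x,y)=1$ and $d_x\mid x$, the residues $ix+jy$ $(i\in\Z_{2m},\ j\in[0,d_x-1])$ exhaust $\Z_{2n}$; in these coordinates the $d_x$ ``columns'' $\{ix+jy:i\in\Z_{2m}\}$ are $2m$-cycles of length-$x$ edges, while the length-$y$ edges are the ``rungs'' joining $ix+jy$ to $ix+(j+1)y$, together with the twisted rungs joining $ix+(d_x-1)y$ to $(i+\delta)x$, where $\delta x\equiv d_x y\pmod{2n}$ and $\delta$ is odd (because $x/d_x$ is odd). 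So $K_{2n}$, seen through the two lengths, is a twisted torus $C_{2m}\times C_{d_x}$, and a perfect matching is produced by choosing for each $j$ an arc $I_j\subset\Z_{2m}$ of rung-positions between columns $j$ and $j+1$ and then matching the remaining vertices of each column by length-$x$ edges — which is possible exactly when, in every column, the vertices left for length-$x$ edges form arcs of even length. The number of length-$y$ edges is $a=\sum_j|I_j|$; since column $j$ loses $|I_{j-1}|+|I_j|$ vertices to rungs, this sum must be even for every $j$, so the $|I_j|$ all have a common parity $\varepsilon$, and as $d_x$ is odd $a\equiv\varepsilon\pmod 2$. Thus $a$ is odd only when every $|I_j|$ is odd, which already forces $a\ge d_x$.

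\emph{Constructions and main obstacle.} For even $a$ one groups the columns into consecutive pairs, distributes $a$ among them as even arcs of rungs, and matches the complementary even arcs by length-$x$ edges; the remaining (large) values of $a$ are reached by enlarging these arcs and invoking the twisted rung, and — in the case $d_y=1$, after the relabelling $i\mapsto y^{-1}i$ turning the problem into $\{x^{n-a},1^a\}$ — by Lemma~\ref{large_a}. For odd $a$ (so $a\ge d_x$ by (1)) one first installs a single rung between every two consecutive columns, the twisted one included, with positions whose consecutive differences are odd — possible precisely because $\delta$ is odd — so that each column loses two vertices at odd distance and splits into even arcs; this realises the surplus $d_x$, and the residual even amount $a-d_x$ of rungs is then absorbed as in the even case. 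Degenerate instances, notably $m=1$ (which forces $x=n$, $n$ odd and, by (1), $a$ even), are settled directly by Lemmas~\ref{large_an} and~\ref{large_a} and Proposition~\ref{small_a}. The main obstacle is exactly this odd-$a$ case: the parity count shows that the surplus $d_x$ of length-$y$ edges cannot be created column by column but must come from a single family of rungs winding once around the torus, and proving that this family can always be positioned so as to leave every column with even arcs, and that the leftover $a-d_x$ can always be fitted in, is where the hypotheses $\gcd(d_x,y)=1$ and $a\ge d_x$ are used decisively — and is the technical heart of the argument.
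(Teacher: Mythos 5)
Your necessity argument is correct and is exactly the paper's: Lemma~\ref{necessary} gives $Px+Qy\equiv n\pmod{2n}$, reduction modulo $d_x$ forces $d_x\mid Q$ with $Q$ odd and $|Q|\le a$, whence $a\ge d_x$. Your sufficiency strategy is also in the same spirit as the paper's (the coset grid generated by $x$ and $y$, one length-$x$ ``cycle'' per coset of $\langle x\rangle$, rungs of length $y$, and for odd $a$ a single family of $d_x$ rungs winding once around the torus --- this family is literally the paper's matching $B=\{\{2iy,(2i+1)y\}: i\in[0,d_x-1]\}$). However, the construction as planned has two genuine gaps. First, you normalise by $d_x\ge d_y$ rather than by $a\le n-a$; these normalisations are incompatible in general, so when $d_y>1$ and $a$ is large (say $a>n-m$ with $m=n/d_x$) your paired-column arcs cannot supply $a$ rungs, the relabelling $i\mapsto y^{-1}i$ is unavailable, and ``enlarging these arcs and invoking the twisted rung'' is not an argument. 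The paper instead fixes $a\le n-a$ at the outset, so that the majority length is always the one the construction is organised around, and the counting $b\le 3m/4<m$ goes through.

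Second, and more seriously, the step ``the residual even amount $a-d_x$ of rungs is then absorbed as in the even case'' conceals an interference problem: after installing the winding family, the set of free positions in column $j$ depends on $j$, so consecutive columns need not share enough free positions to host an even arc of rungs while leaving \emph{both} columns with even arcs. For instance with $n=15$, $x=5$, $y=3$, $a=13$ and the winding family at positions $p_j=j$, columns $2$ and $3$ share only three free positions and every even sub-arc you insert there disconnects one of the two columns into odd pieces. This is precisely the difficulty the paper's bookkeeping eliminates: it writes $n=d_xm$, takes a base perfect matching $\bar F$ of $K_{2m}$ with list $\{z^m\}$ or $\{\bar y,z^{m-1}\}$, and blows up each base edge into an \emph{independent} gadget on $2d_x$ vertices ($A_{2k}$, carrying $2k$ edges of length $y$ and $d_x-2k$ of length $x$; the exceptional edge becomes $B$). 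Since the gadgets partition $V(K_{2n})$, the even surplus is distributed block by block with no global coordination needed. Your plan needs either this block decomposition or a careful global choice of winding positions plus arc placements; as written, the absorption step would fail.
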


\begin{proof}
To prove the necessity, suppose to the contrary that (1) or (2) is not satisfied. First consider the case when
$a$ is odd and $1\leq a<d_x$.
By Lemma \ref{necessary}, if $K_{2n}$ contains a perfect matching $F$ such that $\ell(F)=
\{ x^{n-a}, y^a\}$, then  there exist $\varepsilon_1,\varepsilon_2,\ldots,\varepsilon_n$ such that
$\varepsilon_i\in \{-1,1\}$ and
$y\sum\limits_{i=1}^{a} \varepsilon_i +x\sum\limits_{i=a+1}^{n} \varepsilon_i \equiv n\pmod {2n}$.
Since $n$ is divisible by $d_x$ and $x\sum\limits_{i=a+1}^{n} \varepsilon_i\equiv 0 \pmod {d_x}$,
the integer $sy$, where $s=\sum\limits_{i=1}^{a} \varepsilon_i$, has to be also divisible by $d_x$.
Notice that $s$ is odd and $-a\leq s\leq a$. Moreover, $\gcd(y,d_x)=1$, which
immediately leads to a contradiction.
The proof in the case when (2) is not satisfied follows in exactly the same way. 

To prove the sufficiency, w.l.o.g. we may assume that $a\leq n-a$ 
(otherwise it is enough to replace $y$ with $x$). Set $d=d_x$.

First, consider the case $d=1$.
Then, there exists an integer $p$ such that $1\leq p<2n$ and $x p\equiv 1\pmod{2n}$,
and there exists an integer $q$ such that $1< q < 2n$ and
$yp\equiv q \pmod{2n}$.
Set $r=\min(q, 2n-q)$; so, $r$ is odd and such that $1<  r \leq n$.
Since $\frac{n}{2}\leq n-a< n$, we can apply Lemma \ref{large_a} (when $r < n$)
or Lemma \ref{large_an} (when $r=n$, whence $d_y=n$).
Hence, there exists a perfect matching $\tilde{F}$ of $K_{2n}$ such that $\ell(\tilde{F})=\{1^{n-a},r^a\}$:
to get a perfect matching $F$ of $K_{2n}$ such that $\ell(F)=\{ x^{n-a}, y^a\}$, 
apply the relabeling $i\mapsto xi$ to all vertices of $\tilde{F}$.

It remains to consider the case $d>1$.
Then $d$ is odd, and $n=dm$, $x=dz$ for some $1\leq z < m$, where $\gcd(z,2m)=1$.
Let $\mu$ be an integer such that $1\leq \mu<2m$ and $z\mu\equiv 1\pmod{2m}$.
Let $\xi$ be a positive integer such that $1\leq \xi <2m$ and $\xi \equiv y \pmod{2m}$,
and set $\bar y =\min(\xi, 2m-\xi)$.
Let $\vartheta$ be an integer such that $0\leq \vartheta < 2m$ and $\vartheta \equiv \bar y \mu\pmod{2m}$.

Let $F'$ be a perfect matching of $K_{2m}$ such that $\ell(F')=\{z^m\}$, whose existence
follows from Proposition \ref{prop:xn}. 
We need to construct another perfect matching,
$F''$, of $K_{2m}$ such that $\ell(F'')=\{\bar y,z^{m-1}\}$.
So, working  modulo $2m$, take
$$F''  = \{\{0,\bar y\}\}\cup \left\{\{(2i-1)z,2iz\}: i\in\left[1,\frac{\vartheta-1}{2}\right]\right\}\cup \left\{\{2iz,(2i+1)z\}: i \in\left[\frac{\vartheta+1}{2},m-1\right]\right\}.$$

Let $\bar{F}=F'$ if $a$ is even and $\bar{F}=F''$ otherwise.
For each edge $\{u,w\}$ of length $z$ in $\bar{F}$ and for each $k$ such that $0\leq k\leq \frac{d-1}{2}$, we construct
a matching $A_{2k}$ of cardinality $d$ in $K_{2n}$ with $2k$ edges of length $y$ and $d-2k$ edges of length $x$.
So, working modulo $2n$, take
$$\begin{array}{rcl}
A_{2k} &=& \{\{d u+2iy,d u+(2i+1)y\},\{d w+2iy,d w+(2i+1)y\}:  i\in [0,k-1]\}\cup\\
&& \{\{d u+iy,d w+iy\}:  i\in [2k,d-1]\}.
\end{array}$$
Notice that $V(A_{2k})=\{du+iy,dw+iy: i\in[0,d-1]\}$.
Similarly, the edge $\{0,\bar y\}$ of length $\bar y$ in $\bar{F}$ corresponds to
a matching $B$ of cardinality $d$ in $K_{2n}$ such that its edges have length $y$:
$$B=\{\{2iy,(2i+1)y\}:\; i\in [0,d-1]\}$$
(also here, we  work modulo $2n$).
Notice that $V(B)=\{du+iy:\; i\in[0,2d-1]\}$.

Let $a=(d-1)b+c$ for nonnegative integers $b$ and $c$ such that $c<d-1$.
Since $a\leq \frac{n}{2}$, it follows that $(d-1)b \leq \frac{dm}{2}$, whence $b\leq\frac{3m}{4}$.
To construct a perfect matching $F$ of $K_{2n}$, we proceed separately depending on the parity of $a$.

\noindent Case I: $a$ is even.
Then $c$ is even. For $b$ edges in $\bar{F}$ we take the corresponding matching $A_{d-1}$,
and, if $c>0$, another edge is used to get $A_c$. 
For each remaining edge of $\bar{F}$  we take a matching $A_0$. 
So, we obtain $b(d-1)+c=a$ edges of length $y$ and $b+(d-c)+(m-b-1)d=n-a$ edges of length $x$.

\noindent Case II: $a$ is odd.
Then $a\geq d$, $b\geq 1$ and $c$ is odd.
A single edge of length $\bar y $ in $\bar{F}$ (the only one if $z\neq \bar y$) is used to construct $B$.
For $b-1$ edges of length $z$ in $\bar{F}$ corresponding $A_{d-1}$ are taken,
and possibly $A_{c-1}$ (if $c>1$) for one more edge of length $z$.
Each remaining edge of length $z$ in $\bar{F}$ is used to construct $A_0$.
If $c=1$ we obtain $d+(b-1)(d-1)=a$ edges of length $y$ and $(b-1)+d(m-b)=n-a$
edges of length $x$.
If $c>1$ we obtain $d+(b-1)(d-1)+(c-1)=a$ edges of length $y$ and $(b-1)+(d-c+1)+d(m-b-1)=n-a$ edges of length $x$.

In both cases,  the edges give a perfect matching $F$ of $K_{2n}$ such that $\ell(F)=\{y^a, x^{n-a}\}$.
\end{proof}

\begin{ex}
Consider the list $L=\{15^{25},35^{17}\}$. Following the notation of the proof of the previous theorem we get:
$x=15$, $y=35$, $n=42$ and $a=17$, whence $d=3$, $d_y=7$, $m=14$, $z=5$, $\bar y =7$, $\mu=17$
and $\vartheta= 7$.
First, we need to construct a perfect matching $\bar F$ of $K_{28}$ such that $\ell(\bar F)=\{7, 5^{13} \}$:
$$\begin{array}{rcl}
\bar F & =& \{\{0,7 \}\}\cup \{\{5, 10\},\{15,20\},\{25,2\} \}
\cup \{\{ 12, 17\},\{  22, 27\},\{4, 9 \},\\
&& \{ 14, 19 \},\{24, 1\},\{ 6, 11\},\{ 16, 21\},\{26, 3 \},\{ 8, 13 \},\{18, 23 \} \}.
  \end{array}$$
We now follow Case II: $b=8$ and $c=1$.
For the edge $\{0,7\}$ of $\bar F$, we construct the matching 
$B=\{\{0, 35 \}, \{ 70, 21 \}, \{ 56, 7 \}   \}$.
Then for the seven edges $\{5, 10\},\{15,20\}$, $\{25,2\}$, $\{ 12, 17\}$, $\{  22, 27\}$, 
$\{4, 9 \}$, $\{ 14, 19 \}$ of $\bar F$ we make the matchings 
$$\begin{array}{rclcrcl}
A_2^1 & = & \{\{ 15, 50\}, \{30, 65 \}, \{ 1, 16 \}\}, & \quad & 
A_2^2 & =& \{\{ 45, 80\}, \{60, 11 \},  \{ 31, 46 \}\}, \\
A_2^3 & =& \{\{  75, 26\}, \{6, 41 \}, \{ 61, 76 \}\}, && 
A_2^4 & = & \{ \{ 36, 71\}, \{51, 2 \}, \{ 22, 37 \}\},\\
A_2^5 & =& \{\{ 66, 17\}, \{ 81, 32 \}, \{ 52, 67 \}\}, && 
A_2^6 & =& \{ \{ 12, 47\}, \{27, 62 \}, \{ 82, 13 \}\},\\
A_2^7 & =& \{ \{42, 77\}, \{57, 8 \}, \{ 28, 43 \}\}, 
\end{array}$$
respectively. 
For each remaining edge of $\bar F$ we apply substitution
$A_0$ to obtain:
$$\begin{array}{rclcrcl}
A_0^1 & =& \{\{ 72, 3 \},\{ 23, 38 \},\{58, 73\}\}, & \quad & 
A_0^2 & =& \{ \{ 18, 33 \},\{ 53, 68 \},\{ 4, 19 \}\}\\
A_0^3 & = &\{\{ 48, 63 \},\{ 83, 14\},\{ 34, 49 \}\},&&
A_0^4 & =& \{\{  78, 9 \},\{ 29, 44 \},\{64, 79 \}\}, \\
A_0^5 & =& \{\{ 24, 39 \},\{ 59, 74 \},\{ 10, 25 \}\},&&
A_0^6 & =& \{\{ 54, 69 \},\{ 5, 20 \},\{ 40, 55 \}\},
  \end{array}$$
respectively.
So, $F=B\cup A_2^1\cup A_2^2\cup A_2^3\cup A_2^4\cup A_2^5\cup A_2^6\cup A_2^7\cup  
A_0^1\cup A_0^2\cup A_0^3\cup A_0^4 \cup A_0^5\cup A_0^6$.
\end{ex}

We now prove our main result.

\begin{proof}[Proof of Theorem \ref{thm:1x}]
If $d$ does not divide $n$, then there is no  perfect matching $F$ of $K_{2n}$ such that $\ell(F)=\{ x^{n-a}, y^a\}$ by Proposition \ref{prop:MPP}.
If $d=1$, the result follows from Theorems \ref{thm:xy} and \ref{thm:x odd}.
If $d=n$, then $x=y=n$, which is excluded by hypothesis. So, we may assume that $d$ is a divisor of $n$ 
such that $1<d<n$.
Let $\bar x$, $\bar y$ and $\bar n$ be integers such that $x=d\bar x$, $y=d\bar y$ and $n=d\bar n$.
Also, let $e_x=\frac{d_x}{d}$ and $e_y=\frac{d_y}{d}$.
Then, $e_x = \gcd(\bar x, 2\bar n)$ and $e_y = \gcd(\bar y, 2\bar n)$.
Note that $d_x$ divides $n$ if and only if $e_x$ divides $\bar n$.
Since $\gcd(\bar x, \bar y , 2\bar n)=1$, we have that $\bar x$ and $\bar y$ cannot be both even.
So, we can assume that $\bar y$ is odd, which implies that $d_y$ divides $n$.
It follows, by Proposition \ref{prop:xn}, that we can construct a perfect matching $\tilde F$ of $K_{2\bar n}$ such that
$\ell(\tilde F)=\{{\bar y}^{\bar n} \}$. 

By Propositions \ref{prop:b} and \ref{prop:c}, there exists a perfect matching $F$ of $K_{2n}$ such that $\ell(F)=\{ x^{n-a}, y^a\}$
if and only if 
\begin{equation}\label{F_i}
\begin{array}{l}
\textrm{there exist $d$ (not necessarily distinct) perfect matchings } F_0,F_1,\ldots, F_{d-1} \textrm{ of $K_{2\bar n}$} \\
\textrm{such that } \ell(F_i)=L_i, \textrm{ where } L_i=\{\bar x^{\bar n- a_i}, \bar y^{a_i}\} \textrm{ for suitable nonnegative integers}\\
a_0,a_1,\ldots,a_{d-1} \textrm{ such that $\sum\limits_{i=0}^{d-1} a_i=n$. }
\end{array}
\end{equation}

Suppose that $\bar x$ is even. Then, a necessary condition for the existence of the matchings $F_i$ satisfying \eqref{F_i}
is that $\bar n-a_i$ is even for all $i$. This implies that $n-a$ must be even.
So, assume that this happens. If $d_x$ divides $n$, then $\bar n$ is even and the integers $a_i$ can be chosen in the interval $[0,\bar n]$.
Take two integers $q$ and $r$ such that $n-a= 2qd + 2r $ with $0\leq r< d$, and define
$a_i=\bar n-(2q+2)$ if $0\leq i < r$ and $a_{i}=\bar n- 2q$  if $r\leq i< d$.
Notice that $0\leq 2q < \bar n$.
Since we can apply  Proposition \ref{prop:xn} and Theorem \ref{thm:xy}, condition \eqref{F_i} holds, giving case (a) of (1).
If $d_x$ does not divide $n$, then the integers $a_i$ must be chosen in  $[1,\bar n]$. 
If $2a<d_x$, then one of the integers $a_i$ must be less than $\frac{e_x}{2}$.
By Theorem \ref{thm:xy}, there is no perfect matching $F_i$ of $K_{2\bar n}$ such that 
$\ell(F_i)=\{ \bar x^{\bar n-a_i}, {\bar y}^{a_i}\}$.
Hence, we can assume $2a\geq d_x$, and take $q,r,a_i$ as before.
Note that $2a_i\geq e_x$ for every $i$, so the existence of the matchings $F_i$ satisfying \eqref{F_i} follows from Theorem \ref{thm:xy}, giving case (b) of (1).

Now, assume that $\bar x$ is odd. Notice that both $e_x$ and $e_y$ are odd.
Arguing as before, the integers $a_i$ can be chosen in $[0,\bar n]$.
If $a$ is an odd integer such that $a < e_x$, then one of
integers $a_i$ must be odd. By Theorem \ref{thm:x odd}, there is no perfect matching $F_i$ of $K_{2\bar n}$
such that  $\ell(F_i)=\{{\bar x}^{\bar n-a}, {\bar y}^{a_i} \}$. So, there is no perfect matching $F$ of 
$K_{2n}$ such that $\ell(F)=\{x^{n-a},y^a\}$.
Similarly, if $n-a$ is an odd integer such that $n-a < e_y$.
So, assume condition (3) holds.

Let $q$ and $r$ be two nonnegative integers such that $a=q\bar{n}+r$, where $r<\bar{n}$.
Note that $q<d$. Moreover, if $r$ is odd and $r<e_x$, let $s=e_x-r$, otherwise set $s=0$.
Similarly, if $\bar{n}-r$ is odd and $\bar{n}-r<e_y$, let $t=e_y-(\bar{n}-r)$, otherwise set $t=0$.
Notice that both $s$ and $t$ are even integers and at least one of them is equal to zero.
Moreover, $s<e_x\leq \frac{\bar{n}}{2}$ and $t< e_y\leq\frac{\bar{n}}{2}$.
Thus, $\bar{n}-s>e_x$ and $\bar{n}-t> e_y$. 
If $s>0$, then clearly $q\geq 1$ and  $\bar{n}-(r+s) \geq e_y$;
similarly, if $t>0$, then $q\leq d-2$ and $r-t\geq e_x$.
According to \eqref{F_i}, to obtain a perfect matching $F$ of $K_{2n}$ such that  $\ell(F)=\{ x^{n-a}, y^a\}$ it suffices to construct 
the perfect matchings $F_0,F_1,\ldots, F_{d-1}$ of $K_{2\bar n}$ such that $\ell(F_i)=L_i$, distinguishing three cases:

\noindent Case I: $s=t=0$. Take $L_i=\{\bar{y}^{\bar{n}}\}$ for $i\in[0,q-1]$, 
$L_q=\{\bar{x}^{\bar{n}-r},\bar{y}^r\}$, and $L_i=\{\bar{x}^{\bar{n}}\}$ for $i\in[q+1,d-1]$.

\noindent Case II: $s>0$. Take $L_i=\{\bar{y}^{\bar{n}}\}$ for $i\in[0,q-2]$, 
$L_{q-1}=\{\bar{x}^{s},\bar{y}^{\bar{n}-s}\}$, $L_q=\{\bar{x}^{\bar{n}-(r+s)},\bar{y}^{r+s}\}$, 
and $L_i=\{\bar{x}^{\bar{n}}\}$ for $i\in[q+1,d-1]$.

\noindent Case III: $t>0$. Take $L_i=\{\bar{y}^{\bar{n}}\}$ for $i\in[0,q-1]$,
$L_q=\{\bar{x}^{\bar{n}-(r-t)},\bar{y}^{r-t}\}$, $L_{q+1}=\{\bar{x}^{\bar{n}-t},\bar{y}^{t}\}$, and $L_i=\{\bar{x}^{\bar{n}}\}$ for $i\in[q+2,d-1]$.

In each case, the existence of the corresponding perfect matchings $F_i$ of $K_{2\bar n}$ follows from Proposition \ref{prop:xn} and Theorem \ref{thm:x odd}.
\end{proof}

\begin{ex}
Let $L=\{10^{24}, 15^6\}$. Then, $x=10$, $y=15$, $n=30$, $a=6$, $d=5$, $d_x=10$ and $d_y=15$.
Hence, $\bar x=2$, $\bar y=3$ and $\bar n=6$; so, $\bar x$ is even.
To construct a perfect matching $F$ of $K_{60}$ such that $\ell(F)=L$ we have to construct 
five perfect matchings $F_0,\ldots, F_4$ of $K_{12}$
such that $\ell(F_i)=\{2^{6-a_i}, 3^{a_i}\}$ and $a_0+\ldots+a_4=6$.
We note that $n-a=24$ is even, so we write $24 = 2q\cdot 5 + 2r$, where $q=2$ and $r=2$.
In this case $d_x$ divides $n$: so, we take $a_0=a_1=0$ and $a_2=a_3=a_4=2$.
Hence, we construct the perfect matchings $F_i$ such that 
$\ell(F_0)=\ell(F_1)=\{2^6\}$ and $\ell(F_2)=\ell(F_3)=\ell(F_4)=\{2^4, 3^2\}$.
\end{ex}

\begin{ex}
Let $L=\{10^{20}, 15^{5}\}$. Then, $x=10$, $y=15$, $n=25$, $a=5$, $d=5$, $d_x=10$ and $d_y=5$.
Hence, $\bar x=2$, $\bar y=3$ and $\bar n=5$; so, $\bar x$ is even.
We note that $n-a=20$ is even, so we write $20 = 2q\cdot 5 + 2r$, where $q=2$ and $r=0$.
In this case, $d_x$ does divides $n$ and $2a\geq d_x$: so, 
we construct the perfect matchings $F_i$ of $K_{10}$ such that 
$\ell(F_0)=\ldots=\ell(F_4)=\{2^4, 3^1\}$. Note that this can be done, since $2\cdot 1\geq \gcd(2,10)=2$.
\end{ex}

\begin{ex}
Let $L=\{75^{5}, 9^{85}\}$. Then, $x=75$, $y=9$, $n=90$, $a=85$, $d=3$, 
$d_x=15$ and $d_y=9$. Hence, $\bar x=25$, $\bar y=3$, $\bar n=30$, $e_x=5$ and  $e_y=3$.
Write $85 = q\cdot 30 + r$, where $q=2$ and $r=25$.
Since  $s=t=0$, we construct three perfect matchings $F_0,F_1,F_2$ of $K_{60}$ such that
$\ell(F_0)=\ell(F_1)=\{3^{30}\}$ and $\ell(F_2)=\{25^{5}, 3^{25}\}$.
\end{ex}

\begin{ex}
Let $L=\{70^{42}, 42^{48}\}$. Then, $x=70$, $y=42$, $n=90$, $a=48$, $d=2$, 
$d_x=10$ and $d_y=6$.
Hence, $\bar x=35$, $\bar y=21$, $\bar n=45$, $e_x=5$ and  $e_y=3$.
Write $48 = q\cdot 45 + r$, where $q=1$ and $r=3$.
Since  $s>0$, we construct two perfect matchings $F_0,F_1$ of $K_{90}$ such that
$\ell(F_0)=\{35^{2}, 21^{43}\}$  and $\ell(F_1)=\{35^{40} , 21^{5}\}$.
\end{ex}

\begin{ex}
Let $L=\{45^{16}, 25^{59}\}$. Then, $x=45$, $y=25$, $n=75$, $a=59$, $d=5$, 
$d_x=15$ and $d_y=25$. Hence, $\bar x=9$, $\bar y=5$, $\bar n=15$, $e_x=9$ and $e_y=5$.
Write $59 = q\cdot 15 + r$, where $q=3$ and $r=14$.
Since $t>0$, we construct 
five perfect matchings $F_0,\ldots,F_4$ of $K_{90}$ such that
$\ell(F_0)=\ell(F_1)=\ell(F_2)=\{5^{15}\}$, $\ell(F_3)=\{ 9^{5}, 5^{10  }\}$ and
$\ell(F_4)=\{9^{11},  5^{4}\}$.
\end{ex}

\section{Lists with all the elements with the same multiplicity}\label{sec:246}

In this section we focus on lists where each element appears the same number of times, say $t$, with underlying set
$\left\{1,2,\ldots,\frac{n}{t}\right\}$, $\left\{2,4,\ldots,\frac{2n}{t}\right\}$ or $\left\{1,3,\ldots,\frac{2n}{t}-1\right\}$.

We start by showing a connection between the problem investigated in this paper and Skolem sequences,
for details on the topic see \cite{SS}.
We recall that a \emph{Skolem sequence} of order $n$ is a sequence $S(n)=(s_0,s_1,\ldots,s_{2n-1})$ of $2n$ integers satisfying the conditions:
\begin{itemize}
\item[(1)] for every $k\in [1,n]$ there exist two elements $s_i,s_j\in S(n)$ such that $s_i=s_j=k$,
\item[(2)] if $s_i=s_j=k$ with $i<j$, then $j-i=k$.
\end{itemize}
Skolem sequences are also written as collections of ordered pairs $\{(a_i,b_i): 1\leq i \leq n, b_i-a_i=i\}$
with $\cup_{i=1}^n\{a_i,b_i\}=[0,2n-1]$, which can be seen as the edges of a perfect matching $F$ of $K_{2n}$.
It is easy to see that the list of edge-lengths of $F$ is nothing but the set  $[1, n]$.
For instance, the Skolem sequence  $S(5)=(1,1,3,4,5,3,2,4,2,5)$  of order $5$ can be seen as the perfect matching
$F=\{\{0,1\},\{6,8\},\{2,5\},\{3,7\},\{4,9\}\}$ of $K_{10}$ such that $\ell(F)=\ell'(F)=[1,5]$.

\begin{prop}\label{prop:Skolem}
Let $L=\{1,2,\ldots,n\}$. There exists a perfect matching $F$ of $K_{2n}$ such that $\ell(F)=L$
  if and only if $n\equiv 0,1\pmod 4$.
	Moreover, $\ell(F)=\ell'(F)$ holds.
\end{prop}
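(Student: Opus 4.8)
The plan is to reduce the statement to the classical existence theory for Skolem sequences. The key observation is the one already recorded in the paragraph preceding the proposition: a Skolem sequence of order $n$, written as a collection of ordered pairs $\{(a_i,b_i): 1\le i\le n,\ b_i-a_i=i\}$ with $\bigcup_i\{a_i,b_i\}=[0,2n-1]$, is literally a perfect matching $F$ of $K_{2n}$ with $\ell'(F)=[1,n]$, and conversely. The classical theorem of Skolem states that a Skolem sequence of order $n$ exists if and only if $n\equiv 0,1\pmod 4$. So for the ``if'' direction I would simply invoke this result to produce $F$, and for the ``only if'' direction I need to show that the congruence condition is forced even allowing edge-lengths $\ell$ rather than $\ell'$ — that is, the extra flexibility of wrapping around modulo $2n$ does not help.

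First I would handle the ``only if'' direction via a parity/counting argument of the same flavour as Skolem's classical necessity proof. Suppose $F$ is a perfect matching of $K_{2n}$ with $\ell(F)=[1,n]$. For each edge $\{u,w\}$ of length $k$, either $\ell'(u,w)=k$ or $\ell'(u,w)=2n-k$; in the first case write $m_k=0$, in the second $m_k=1$. Then summing all vertices of $K_{2n}$ in two ways: $\sum_{j=0}^{2n-1} j = n(2n-1)$, and on the other hand each edge $\{u,w\}$ with $w>u$ contributes $u+w$, with $w-u \equiv \pm k \pmod{2n}$, i.e. $w-u = k$ or $w-u = 2n-k$. Pairing $u+w$ with $w-u$ shows $2w = (u+w)+(w-u)$, so $\sum_i (w_i - u_i) = \sum_i w_i - \sum_i u_i$ where the $u_i$ are the ``small'' endpoints; modulo $2$, $\sum_i(w_i-u_i) \equiv \sum_k k = \binom{n+1}{2} \pmod 2$ in the no-wrap case, and each wrap changes the contribution of that edge by $2n-2k \equiv 0 \pmod 2$, so the parity is unchanged. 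Meanwhile $\sum_i w_i + \sum_i u_i = n(2n-1) \equiv n \pmod 2$. Combining, $n \equiv 2\sum_i u_i + \binom{n+1}{2} \pmod 2$, forcing $\binom{n+1}{2} \equiv n \pmod 2$, i.e. $\frac{n(n+1)}{2}\equiv n\pmod 2$, which after simplification gives $n\equiv 0,1\pmod 4$. (I would double-check the arithmetic here, isolating exactly the step where the wrap-around terms cancel mod $2$; this is the only delicate point.)

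For the ``moreover'' clause I would argue that any perfect matching $F$ realizing $\ell(F)=[1,n]$ must in fact satisfy $\ell'(F)=\ell(F)$. Indeed, suppose some edge $e=\{u,w\}$ has $\ell(e)=k$ but $\ell'(e)=2n-k$. Since the list $\ell'(F)$ consists of $n$ values in $[1,2n-1]$ summing (by the telescoping argument $\sum(w_i-u_i)$ over the partition $[0,2n-1]$ into matched pairs, which forces $\sum_i(w_i-u_i)=\sum_i w_i-\sum_i u_i$ where $\{u_i\}\sqcup\{w_i\}=[0,2n-1]$ is a specific balanced split) to a value congruent to $n(2n-1)\pmod{2}$ and of controlled size, I would show that replacing a genuine length by its complement $2n-k>n$ would push the total $\sum \ell'(e_i)$ too large to be consistent with the constraint that $\{u_i\}$ and $\{w_i\}$ partition $[0,2n-1]$. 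Concretely: $\sum_{e}\ell'(e) = \sum_i (w_i-u_i) = \sum_i w_i - \sum_i u_i \le \sum_{j=n}^{2n-1} j - \sum_{j=0}^{n-1} j = n^2$, with equality only in the Skolem case; since $\sum_{k=1}^n k = \binom{n+1}{2} \le n^2$ and any wrap replaces some $k\le n$ by $2n-k\ge n$, the total would become $\binom{n+1}{2} + \sum_{\text{wrapped }k}(2n-2k)$, and I would need that this exceeds $n^2$ unless there are no wrapped edges — but this is not automatic for small wrapped values, so the cleaner route is: a Skolem-type matching has $V(F)=[0,2n-1]$ and the pairs $(a_i,b_i)$ with $b_i-a_i=i$ literally tile $[0,2n-1]$, which is exactly the hook-length/tiling structure forcing no wrap. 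I expect this ``moreover'' part to be the main obstacle, and I would likely prove it by the tiling argument rather than by inequalities: if $\ell(F)=[1,n]$ then listing the edges $e_i$ with $\ell(e_i)=i$ and setting $a_i=\min(u_i,w_i)$, the intervals do not overlap, $b_i=a_i+i$ unless wrapped, and a wrapped edge would leave a ``gap'' of size $i$ at the top and force an overlap at the bottom, contradicting that $F$ is a matching — I would make this precise by induction on $i$ from $n$ downward.

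Finally, for the ``if'' direction, having $n\equiv 0,1\pmod 4$, I invoke Skolem's theorem to obtain the sequence, translate it to a perfect matching $F$ of $K_{2n}$ with $\ell'(F)=[1,n]$, and note that since every length is $\le n=\lfloor 2n/2\rfloor$ we automatically have $\ell(F)=\ell'(F)=[1,n]=L$, as noted in the preliminary section. This completes the proof.
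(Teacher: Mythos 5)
Your proof of the equivalence itself is correct and follows the same route as the paper: sufficiency is Skolem's theorem (and since every $\ell'$-value of the resulting matching is at most $n$, the constructed $F$ automatically satisfies $\ell(F)=\ell'(F)=L$, which is all the ``moreover'' clause asserts); necessity is a parity count. Your vertex-sum computation is a from-scratch version of what the paper gets by citing its Proposition on the parity of the number of even edge-lengths: the number of even elements of $[1,n]$ is $\lfloor n/2\rfloor$, which is even exactly when $n\equiv 0,1\pmod 4$, and your congruence $\binom{n+1}{2}\equiv n\pmod 2$ is equivalent to this. The wrap-around terms do cancel mod $2$ as you suspected, since $2n-2k\equiv 0$.

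However, your third paragraph tries to prove something that is false, namely that \emph{every} perfect matching $F$ with $\ell(F)=[1,n]$ must satisfy $\ell'(F)=\ell(F)$. Take $n=4$ and
$$F=\bigl\{\{0,5\},\{1,2\},\{4,6\},\{3,7\}\bigr\}.$$
This is a perfect matching of $K_8$ with $\ell(F)=\{1,2,3,4\}$, yet $\ell'(F)=\{1,2,4,5\}$ because the edge $\{0,5\}$ wraps around. So neither the inequality $\sum_e\ell'(e)\le n^2$ (which is not violated here: the sum is $12<16$) nor the proposed tiling/induction argument can succeed. Fortunately this stronger claim is not what the proposition asserts: the ``moreover'' refers to the matching produced in the existence proof, and your final paragraph already disposes of it correctly. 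Delete the third paragraph and the proof is complete and in line with the paper's.
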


\begin{proof}
For every positive integer $n\equiv 0,1\pmod 4$, there exists a Skolem sequence of order $n$, see \cite{SS},
hence the result follows by previous considerations.
If $n\equiv 2,3\pmod 4$, $L$ contains an odd number of even numbers, hence the non-existence of $F$ follows by Proposition \ref{prop:EvenNumber}.
\end{proof}

\begin{cor}
Given $t\leq n$, let $L=\{i^{a_i} : i \in [1,t]\}$ be such that $|L|=n$, $a_i\geq a_{i+1}\geq 1$ and $a_{4k+2}=a_{4k+3}=a_{4k+4}$ for any $k$.
Then there exists a perfect matching $F$ of $K_{2n}$ such that $\ell(F)=\ell'(F)=L$.
\end{cor}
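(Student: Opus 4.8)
The plan is to decompose the target list $L$ into pieces, each of which is either a full consecutive block $[1,x]$ for some $x\equiv 0,1\pmod 4$ or a small list whose existence is already guaranteed, and then glue the corresponding perfect matchings together using Proposition~\ref{prop:b} or Remark~\ref{rem:F1+F2}. The hypotheses $a_i\ge a_{i+1}\ge 1$ and $a_{4k+2}=a_{4k+3}=a_{4k+4}$ are exactly what is needed to make such a decomposition possible while controlling the parity of the number of even elements, which by Proposition~\ref{prop:EvenNumber} is the only obstruction to existence.

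\medskip

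First I would set up the decomposition. Since $a_1\ge a_2\ge\cdots\ge a_t\ge 1$, write $L$ as a union of ``staircase layers'': for $j=1,\ldots,a_t$ the $j$-th layer contributes one copy of each of $1,2,\ldots,t$; for $j=a_t+1,\ldots$, higher layers contribute shorter prefixes $[1,x_j]$ with $x_j$ decreasing. Concretely, if the distinct values among the $a_i$ are $m_1>m_2>\cdots$, occurring for the first $r_1,r_2,\ldots$ indices respectively, then $L=\bigcup_s (m_s-m_{s+1})\cdot[1,r_s]$ (with $m_{\text{last}+1}=0$). The grouping condition $a_{4k+2}=a_{4k+3}=a_{4k+4}$ forces every index $r_s$ at which the sequence drops to lie in $\{4k+1:k\ge 0\}\cup\{1\}$, i.e. each prefix $[1,r_s]$ that actually appears has $r_s\equiv 0,1\pmod 4$ — wait, more carefully: a drop can only occur between position $4k+1$ and $4k+2$, so each $r_s$ satisfies $r_s\equiv 1\pmod 4$ or $r_s\equiv 0 \pmod 4$. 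Hence every prefix block $[1,r_s]$ appearing in the decomposition is of the form covered by Proposition~\ref{prop:Skolem}, so each admits a perfect matching $F_s$ of $K_{2r_s}$ with $\ell(F_s)=\ell'(F_s)=[1,r_s]$.

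\medskip

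Next I would assemble the pieces. We have $L=\bigcup_s c_s\cdot [1,r_s]$ where $c_s=m_s-m_{s+1}\ge 1$ and $\sum_s c_s r_s=n$. Take $c_s$ copies of the matching $F_s$ of $K_{2r_s}$ and, using Remark~\ref{rem:F1+F2} repeatedly (translating each successive matching by the number of vertices already used), form a single matching $F$ of $K_{2n}$. Since every $F_s$ satisfies $\ell(F_s)=\ell'(F_s)$, and the construction of $F$ via disjoint shifted copies on consecutive vertex-intervals never creates a ``wrap-around'' edge, we get $\ell(F)=\ell'(F)=\bigcup_s c_s\cdot\ell'(F_s)=\bigcup_s c_s\cdot[1,r_s]=L$. (One must check that $\ell'(e)\le n$ for every edge, which holds because each $F_s$ lives on $2r_s\le 2n$ consecutive vertices and $r_s\le t\le n$.) This yields the desired perfect matching with $\ell(F)=\ell'(F)=L$.

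\medskip

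The main obstacle is bookkeeping rather than any deep difficulty: one must verify precisely that the condition $a_{4k+2}=a_{4k+3}=a_{4k+4}$ forces each index $r_s$ where the multiplicity strictly decreases to be $\equiv 0$ or $1 \pmod 4$, so that Proposition~\ref{prop:Skolem} applies to every block. It is tempting to worry about the parity condition of Proposition~\ref{prop:EvenNumber}, but this is automatically satisfied: each Skolem block $[1,r_s]$ with $r_s\equiv 0,1\pmod 4$ contributes an even number of even lengths, hence so does $c_s$ copies of it, hence so does the whole union. Thus the only real content is identifying the correct block structure forced by the hypotheses and then invoking Proposition~\ref{prop:Skolem} and Remark~\ref{rem:F1+F2}.
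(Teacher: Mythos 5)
Your proposal is correct and follows essentially the same route as the paper: decompose $L$ into prefix blocks $[1,r_s]$ at the indices where the multiplicity drops, observe that the condition $a_{4k+2}=a_{4k+3}=a_{4k+4}$ forces every such $r_s$ (including $t$ itself) to be $\equiv 0,1\pmod 4$, realize each block by Proposition~\ref{prop:Skolem}, and concatenate via Remark~\ref{rem:F1+F2}, noting that $\ell=\ell'$ since all lengths are at most $t\leq n$. The paper states this in two lines; you have simply written out the bookkeeping.
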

\begin{proof}
Note that $a_{4k+2}=a_{4k+3}=a_{4k+4}$  implies $t\equiv 0,1 \pmod 4$.
The result follows by Proposition \ref{prop:Skolem} and Remark \ref{rem:F1+F2}.
\end{proof}

\begin{ex}
Let $L=\{1^5,2^4,3^4,4^4,5^4,6^2,7^2,8^2,9\}$, so $|L|=28$.
Let $S_i$ be a Skolem sequence of order $i$, with $i\equiv 0,1\pmod 4$,  and consider the corresponding perfect matching $F_i$.
Then $$F=F_9\cup(F_8+18)\cup (F_5+34)\cup (F_5+44) \cup (F_1+54)$$
is a perfect matching of $K_{56}$ such that  $\ell(F)=\ell'(F)=L$.
\end{ex}

To get a generalization of Proposition \ref{prop:Skolem} we consider the case in which all the elements in the list are odd (even, respectively)
integers.

\begin{prop}\label{prop:odd}
Let $t\geq 2$.
There exists a perfect matching $F$ of $K_{2n}$ such that $\ell(F)=\left\{1^t,3^t,\ldots,(\frac{2n}{t}-1)^t\right\}$
if and only if $n\equiv 0 \pmod t.$
\end{prop}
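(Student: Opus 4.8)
The plan is to dispose of necessity by a cardinality count and to prove sufficiency by an explicit tiling construction. For \emph{necessity}: if $F$ is a perfect matching of $K_{2n}$ with $\ell(F)=L$, then $L$ must be a list of exactly $n$ positive integers not exceeding $n$, and its underlying set is $\{1,3,\ldots,\frac{2n}{t}-1\}$, a set of odd integers. For this description to be consistent and for $L$ to contain $n$ elements, $\frac{2n}{t}$ must be an even positive integer, i.e. $t\mid n$. Indeed, if $t\mid 2n$ but $t\nmid n$ (which forces $t$ even), then $m:=\frac{2n}{t}$ is odd, so $\frac{2n}{t}-1$ is even and the list of odd integers below $\frac{2n}{t}$, each taken $t$ times, has $t\cdot\frac{m-1}{2}=n-\frac{t}{2}\neq n$ elements; and if $t\nmid 2n$ then $\frac{2n}{t}$ is not even an integer. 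In every case $L$ cannot be realised in $K_{2n}$ unless $n\equiv 0\pmod t$.

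For \emph{sufficiency}, assume $t\mid n$ and set $m=\frac{2n}{t}$; this is an even integer, and since $t\ge 2$ one has $m\le n$, so in particular $m-1<n$. The building block is the ``reversal'' matching $M=\{\{i,\,m-1-i\}: i\in[0,\frac{m}{2}-1]\}$. Its vertex set is $[0,\frac{m}{2}-1]\cup[\frac{m}{2},m-1]=[0,m-1]$, so $M$ is a perfect matching of $K_m$; moreover the edge $\{i,m-1-i\}$ has $\ell'$-length $m-1-2i$, and as $i$ runs through $[0,\frac{m}{2}-1]$ these values run through the odd integers $1,3,\ldots,m-1$, so $\ell'(M)=\{1,3,\ldots,m-1\}$. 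For $k\in[0,t-1]$ put $M_k=M+km$, a perfect matching supported on the interval $[km,(k+1)m-1]$, with $\ell'(M_k)=\ell'(M)$. Every edge of $M_k$ has $\ell'$-length at most $m-1<n$, hence its length in $K_{2n}$ equals its $\ell'$-length. Since the intervals $[km,(k+1)m-1]$, $k\in[0,t-1]$, partition $[0,2n-1]$, the union $F=\bigcup_{k=0}^{t-1}M_k$ is a perfect matching of $K_{2n}$, and $\ell(F)=\bigcup_{k=0}^{t-1}\{1,3,\ldots,m-1\}=\{1^t,3^t,\ldots,(m-1)^t\}$, as required.

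I do not expect any real obstacle here. The only genuine point to watch is the inequality $m-1<n$, which is exactly where the hypothesis $t\ge 2$ enters and which is what lets us read $\ell(F)$ off directly from the (trivially computed) difference-lengths of the tiled matching rather than worrying about lengths wrapping around modulo $2n$; everything else is the bookkeeping that the $t$ translates of $M$ cover $[0,2n-1]$ without overlap.
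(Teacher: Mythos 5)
Your proof is correct and is essentially the paper's own argument: the edge $\{km+i,\,km+m-1-i\}$ of your translate $M_k$ is exactly the paper's edge $\left\{\frac{2n}{t}k+i,\ \frac{2n}{t}(k+1)-i-1\right\}$, so the two constructions coincide. Your explicit checks (the cardinality count for necessity, and the bound $m-1<n$ guaranteeing $\ell=\ell'$) simply fill in what the paper dismisses as ``trivial'' and ``easily checked.''
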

\begin{proof}
It is trivial that $n\equiv 0 \pmod t$ is a necessary condition.
On the other hand, one can easily check that
$$F=\left\{\left\{ \frac{2n}{t}i+j, \frac{2n}{t}(i+1)-j-1 \right\} : i\in[0,t-1], j\in\left[0,\frac{n}{t}-1\right]\right\}$$
satisfies the required conditions. Note that also in this case $\ell(F)=\ell'(F)$.
\end{proof}

\begin{ex}\label{ex:odd}
A perfect matching $F$ of $K_{32}$ such that $\ell(F)=\ell'(F)=\{1^4,3^4,5^4,7^4\}$ is
$$\begin{array}{rcl}
F&=&\{\{0,7\},\{1,6\},\{2,5\},\{3,4\},\{8,15\},\{9,14\},\{10,13\},\{11,12\},\{16,23\},\{17,22\},\\
&&\{18,21\},\{19,20\},\{24,31\},\{25,30\},\{26,29\},\{27,28\}\}.
\end{array}$$
\end{ex}

\begin{prop}\label{prop:even}
Let $t\geq 2$. There exists a perfect matching $F$ of $K_{2n}$ such that $\ell(F)=\left\{2^t,4^t,\ldots, \left(\frac{2n}{t}\right)^t\right\}$
if and only if $n\equiv 0 \pmod s$, where $s=t$ if $t$ is even and $s=4t$ otherwise.
\end{prop}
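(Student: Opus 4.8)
The plan is to prove the two implications separately. Since the list $\{2^t,4^t,\ldots,(2n/t)^t\}$ only makes sense when $t\mid n$, I first set $m=n/t$, so the target list in $K_{2n}=K_{2tm}$ is $\{2^t,4^t,\ldots,(2m)^t\}$ and the claimed condition $s\mid n$ amounts to: $t$ even (for any $m$), or $t$ odd and $4\mid m$.

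\emph{Necessity.} Every length is even, so Proposition~\ref{prop:c} with $c=2$ turns a perfect matching $F$ of $K_{2n}$ with this list into two perfect matchings $F_0,F_1$ of $K_n$ whose edge-length multiplicities add up, length by length, to $t$ on $[1,m]$ and to $0$ outside. In particular $K_n$ has a perfect matching, so $n$ is even, and if $t$ is odd this already forces $m$ even. Applying Proposition~\ref{prop:EvenNumber} to $F_0$ and to $F_1$ and summing the two resulting identities gives that $t\lfloor m/2\rfloor$ is even; for $t$ odd this yields $m/2$ even, i.e.\ $4\mid m$. Either way $s\mid n$.

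\emph{Sufficiency.} Here I would argue by strong induction on $n$. By Proposition~\ref{prop:b} with $c=2$ it is enough to produce two perfect matchings $G_0,G_1$ of $K_n$ whose edge-length multisets together form $\{1^t,2^t,\ldots,m^t\}$. When $m$ is even I would take $G_0$ to be the perfect matching of $K_n$ furnished by Proposition~\ref{prop:odd} carrying all the odd lengths $\{1^t,3^t,\ldots,(m-1)^t\}$ (applicable since $2\mid m$), and let $G_1$ carry the remaining, even, lengths $\{2^t,4^t,\ldots,m^t\}$: this is exactly the list treated by Proposition~\ref{prop:even} with its parameter $n$ replaced by $tm/2<n$, so $G_1$ is supplied by the inductive hypothesis as soon as the divisibility condition holds for $tm/2$ --- which it does when $t$ is even, and, for $t$ odd, when $8\mid m$. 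The cases left uncovered are $m$ odd (which, by the necessity analysis, forces $t$ even) and $t$ odd with $m\equiv 4\pmod 8$; for these I would split the $t$ copies of each length between $G_0$ and $G_1$ so that each of the two matchings contains an even number of even edge-lengths (as Proposition~\ref{prop:EvenNumber} demands), and realise each of the two resulting sublists as a concatenation, via Remark~\ref{rem:F1+F2}, of blocks with $\ell=\ell'$: Skolem blocks from Proposition~\ref{prop:Skolem}, the reversed blocks $\{\,\{j,2\ell-1-j\}:j\in[0,\ell-1]\,\}$ on $2\ell$ vertices (edge-lengths $1,3,\ldots,2\ell-1$), the blocks of Proposition~\ref{prop:xn}, and the two-length blocks of Theorem~\ref{thm:1x}; the inequality $m\le n/2$, valid since $t\ge 2$, guarantees that no block wraps around in $K_n$.

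The step I expect to be the main obstacle is precisely this last one. The halving recursion does not close on its own --- dividing $m$ by $2$ can destroy the condition $4\mid m$ when $t$ is odd --- so the residual configurations must be built by hand, and the delicate point is to choose the split of multiplicities, and the blocks, so that \emph{every} sublist that appears is genuinely realisable without wrap-around while the Proposition~\ref{prop:EvenNumber} parity is met separately in $G_0$ and in $G_1$. This is likely to require a few explicit small matchings (and possibly hooked-Skolem or Langford-type blocks) to anchor the induction, for instance for $m\in\{3,4\}$.
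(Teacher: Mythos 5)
Your necessity argument is correct and is essentially the paper's: both pass to the two congruence classes modulo $2$ (Proposition \ref{prop:c}) and invoke Proposition \ref{prop:EvenNumber}, and your summed identity ``$t\lfloor m/2\rfloor$ is even'' is a clean way to package the contradiction the paper derives for $n\equiv 2t\pmod{4t}$.

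The sufficiency half, however, has a genuine gap, and it sits exactly where you say you expect trouble. Your halving recursion reduces $\{2^t,\ldots,(2m)^t\}$ in $K_{2tm}$ to $\{2^t,\ldots,m^t\}$ in $K_{tm}$, so the chain necessarily terminates at $m$ odd (when $t$ is even) and, for $t$ odd, at $m\equiv 4\pmod 8$. These are therefore not residual curiosities but the base cases of the induction; without them nothing is proved, and they are not routine. Already for $t=2$, $m=3$ (list $\{2^2,4^2,6^2\}$ in $K_{12}$) you must split $\{1^2,2^2,3^2\}$ into two realizable perfect matchings of $K_6$: the equal split $\{1,2,3\}+\{1,2,3\}$ violates Proposition \ref{prop:EvenNumber}, and the split $\{1,2^2\}+\{1,3^2\}$ fails because $\{1,3^2\}$ is not realizable in $K_6$ (any two diameters of $K_6$ leave the third diameter pair; equivalently Theorem \ref{thm:x odd} with $a=1<d_x=3$), so only $\{2^2,3\}+\{1^2,3\}$ works. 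The choice of split thus interacts nontrivially with the realizability criteria, and a uniform recipe covering all $m$ odd and all $m\equiv 4\pmod 8$ is precisely the missing content. The paper sidesteps the recursion altogether: for $t$ even it writes down one explicit matching $F_{n,t}$ (nested ``fans'' centred at the odd multiples of $n/t$ together with $t$ long edges), and for $t$ odd it exhibits an explicit matching of $K_{24}$ with list $\{2^3,4^3,6^3,8^3\}$, extends it to $\{2^t,4^t,6^t,8^t\}$ on $8t$ points via $F_{4(t-3),t-3}$, dilates by the factor $n/(4t)$ (this is where $4t\mid n$ enters), and fills the complementary vertices with fans carrying the remaining lengths. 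To complete your route you would need to supply explicit constructions for the two base families; as written, the proof is incomplete.
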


\begin{proof}
To prove necessity, first of all we need to realize that $n\equiv 0 \pmod t$ is a trivial condition.
Moreover, since $\frac{2n}{t}\leq n$ then $t\geq 2$.
By Proposition \ref{prop:EvenNumber}, $n$ is even. Suppose to the contrary that
$t$ is odd and $n\equiv 2t \pmod {4t}$. The existence of $F$ would imply, by Proposition \ref{prop:c},
 the existence of two perfect matchings $F_0,F_1$
of $K_{n}$, one of which has the list of edge-lengths containing an odd number of even integers.
Hence, by Proposition \ref{prop:EvenNumber}, we get a contradiction.

Let $t$ be even. Then it is easily seen that
$$\begin{array}{rcl}
F_{n,t} & =& \left\{\left\{\frac{(2i+1)n}{t}-j,\frac{(2i+1)n}{t}+j \right\}:\; i\in [0,t-1], j\in \left[1,\frac{n}{t}-1\right]\right\}\cup\\[4pt]
&& \left\{\left\{\frac{4in}{t},\frac{(4i+2)n}{t}\right\},\left\{\frac{(4i+1)n}{t},\frac{(4i+3)n}{t}\right\}:\; i\in \left[0,\frac{t}{2}-1\right]\right\}
  \end{array}$$
is a perfect matching of $K_{2n}$ with the required properties. Notice that $\ell(e)=\ell'(e)$ for each $e\in F_{n,t}$.

Let $t$ be odd. Since $t\geq 3$ we have $n\geq 12$. Firstly, we construct a perfect matching $F'$ of $K_{24}$
such that $\ell(F')=\ell'(F')=\{2^3,4^3,6^3,8^3\}$.
Namely,
$F'=\{\{0,8\},\{1,7\},\{2,6\},\{3,5\},$ $\{4,12\},\{9,15\},\{10,16\},\{11,13\},\{14,22\},\{17,21\},\{18,20\},\{19,23\}\}$.
Now, set $F''=F'$ if $t=3$,
$F''=F'\cup (F_{4(t-3),t-3}+24)$ if $t\geq 5$. In both cases $F''$ is a  matching of $K_{2n}$
such that $V(F'')=[0,8t-1]$ and  $\ell(F'')=\{2^t,4^t,6^t,8^t\}$.
If $n=4t$ we have done. Otherwise,
on the set $V(F'')$ apply the relabeling $i\mapsto \frac{ni}{4t}$.
In this way $F''$ is converted into a matching $F^2$ of $K_{2n}$ such that
$\ell(F^2)=\left\{(\frac{n}{2t})^t,(\frac{n}{t})^t,(\frac{3n}{2t})^t,(\frac{2n}{t})^t\right\}$ and
$V(F^2)=U$,
where $U=\left\{0,\frac{n}{4t},\frac{n}{2t},\ldots,2n-\frac{n}{4t}\right\}$. Note that
 $|U|=8t$. Set $A=\left\{\frac{n}{4t},\frac{n}{2t},\frac{3n}{4t},\frac{n}{t}\right\}$
and
$$F^1=\left\{\left\{\frac{(2i+1)n}{t}-j,\frac{(2i+1)n}{t}+j\right\}:\; i\in [0,t-1],\, j\in \left[1,\frac{n}{t}\right]\setminus A\right\}.$$
It is easy to see that $F^1$ is a matching of $K_{2n}$ such that $|F^1|=n-4t$, $V(F^1)=V(K_{2n})\setminus U$ and
$\ell(F^1)=\{(2j)^t : j \in\left[1,\frac{n}{t}\right]\setminus A\}$.  
Thus $F^1\cup F^2$ is a perfect matching of $K_{2n}$ with the required properties. Moreover,
$\ell(e)=\ell'(e)$ for each $e\in F^1\cup F^2$.
\end{proof}

\begin{ex}\label{ex:even}
A perfect matching $F$ of $K_{24}$ such that $\ell(F)=\ell'(F)=\{2^4,4^4,6^4\}$ is
$$\begin{array}{rcl}
F=F_{12,4} &=&\{\{2,4\},\{1,5\},\{8,10\},\{7,11\},\{14,16\},\{13,17\},\{20,22\},\{19,23\}\}\cup\\
&&\{\{0,6\},\{3,9\},\{12,18\},\{15,21\}\}.
\end{array}$$
\end{ex}

\begin{ex}
Here, we consider the list $\{2^7,4^7,6^7,8^7,10^7,12^7,14^7,16^7\}$, so we are working in $K_{112}$.
Firstly, we construct $F''=F'\cup (F_{16,4}+24)$:
$$\begin{array}{rcl}
F''&=&\{\{0,8\},\{1,7\},\{2,6\},\{3,5\},\{4,12\},\{9,15\},\{10,16\},\{11,13\},\{14,22\},\\
&& \{17,21\},\{18,20\},\{19,23\},\{27,29\},\{26,30\},\{25,31\},\{35,37\},\{34,38\},\\
&&\{33,39\},\{43,45\},\{42,46\},\{41,47\},\{51,53\},\{50,54\},\{49,55\},\{24,32\},\\
&& \{40,48\},\{28,36\},\{44,52\}\}.
\end{array}$$
Note that $\ell(F'')=\{2^7,4^7,6^7,8^7\}$.
Now, we construct $F^2$ by applying the relabeling $i\mapsto 2i$:
$$\begin{array}{rcl}
F^2&=&\{\{0,16\},\{2,14\},\{4,12\},\{6,10\},\{8,24\},\{18,30\},\{20,32\},\{22,26\},\\
&& \{28,44\},\{34,42\},\{36,40\},\{38,46\},\{54,58\},\{52,60\},\{50,62\},\{70,74\},\\
&& \{68,76\},\{66,78\},\{86,90\},\{84,92\}, \{82,94\},\{102,106\},\{100,108\},\\
&& \{98,110\},\{48,64\},\{80,96\},\{56,72\},\{88,104\}\}.
\end{array}$$
Clearly, $\ell(F^2)=\{4^7,8^7,12^7,16^7\}$.
Finally we construct $F^1$:
$$\begin{array}{rcl}
F^1&=&\{\{7,9\},\{5,11\},\{3,13\},\{1,15\},\{23,25\},\{21,27\},\{19,29\},\{17,31\},\\
&&\{39,41\},\{37,43\},\{35,45\},\{33,47\},\{55,57\},\{53,59\},\{51,61\},\{49,63\},\\
&&\{71,73\},\{69,75\},\{67,77\},\{65,79\},\{87,89\},\{85,91\},\{83,93\},\{81,95\},\\
&&\{103,105\},\{101,107\},\{99,109\},\{97,111\}\}.
\end{array}$$
It results $\ell(F^1)=\{2^7,6^7,10^7,14^7\}$.
Take $F=F^1\cup F^2$. 
\end{ex}

\begin{cor}\label{1234}
Let $n$, $t$ be two integers such that $t$ divides $n$.
There exists a perfect matching $F$ of $K_{2n}$ such that $\ell(F)=\left\{1^t,2^t,\ldots, \left(\frac{n}{t}\right)^t\right\}$
if and only if either $t$ is even or $\frac{n}{t}\equiv 0,1\pmod 4$.
\end{cor}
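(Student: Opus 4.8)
Write $m=n/t$ (a positive integer, and $m\le n$). I would treat the two directions separately.

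\smallskip
\noindent\textbf{Necessity.} I would put the list in the form required by Proposition~\ref{prop:EvenNumber}, i.e. $L=\{1^{a_1},\ldots,n^{a_n}\}$ with $a_i=t$ for $i\in[1,m]$ and $a_i=0$ otherwise. Then the number of even entries of $L$ equals $t\lfloor m/2\rfloor$, which by Proposition~\ref{prop:EvenNumber} must be even. Hence, if $t$ is odd then $\lfloor m/2\rfloor$ is even, and this is exactly the condition $m\equiv 0,1\pmod 4$. So a perfect matching can exist only if $t$ is even or $m\equiv 0,1\pmod 4$.

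\smallskip
\noindent\textbf{Sufficiency, the case $m\equiv 0,1\pmod 4$ (any $t$).} Here I would invoke Proposition~\ref{prop:Skolem} to get a perfect matching $F_0$ of $K_{2m}$ with $\ell(F_0)=\ell'(F_0)=\{1,2,\ldots,m\}$. Taking $F=\bigcup_{k=0}^{t-1}(F_0+2mk)$ and iterating Remark~\ref{rem:F1+F2}, $F$ is a perfect matching of $K_{2mt}=K_{2n}$ with $\ell'(F)=\{1^t,2^t,\ldots,m^t\}$; since every edge-length is at most $m\le n$, we get $\ell(F)=\ell'(F)=L$.

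\smallskip
\noindent\textbf{Sufficiency, the case $m\equiv 2,3\pmod 4$.} Then the hypothesis forces $t$ even (so $t\ge 2$), and I would split $\{1,2,\ldots,m\}$ into its odd and even elements. The odd ones form a set of $\lceil m/2\rceil$ values $1,3,\ldots$ (up to $m$ or $m-1$), so the sublist $L_o=\{1^t,3^t,\ldots\}$ is precisely the list of Proposition~\ref{prop:odd} taken in $K_{2N_o}$ with $N_o=t\lceil m/2\rceil$; since $N_o/t=\lceil m/2\rceil\in\Z$, that proposition applies and yields $F_o$ of $K_{2N_o}$ with $\ell(F_o)=\ell'(F_o)=L_o$. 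Similarly, the even elements form $L_e=\{2^t,4^t,\ldots\}$, which is the list of Proposition~\ref{prop:even} in $K_{2N_e}$ with $N_e=t\lfloor m/2\rfloor$; as $t$ is even and $N_e/t=\lfloor m/2\rfloor\in\Z$, the condition $N_e\equiv 0\pmod{s}$ (with $s=t$) holds, giving $F_e$ with $\ell(F_e)=\ell'(F_e)=L_e$. Finally $N_o+N_e=t\lceil m/2\rceil+t\lfloor m/2\rfloor=tm=n$, so by Remark~\ref{rem:F1+F2} the matching $F=F_o\cup(F_e+2N_o)$ of $K_{2n}$ satisfies $\ell'(F)=L_o\cup L_e=\{1^t,2^t,\ldots,m^t\}$; all lengths being $\le m\le n$, again $\ell(F)=\ell'(F)=L$.

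\smallskip
The proof is largely bookkeeping once the building blocks are in place; the one point that needs care is that Remark~\ref{rem:F1+F2} only guarantees equality of the $\ell'$-lists, so in each construction one must check that every resulting edge-length stays $\le n$ (it does, being $\le m$) to promote this to equality of the $\ell$-lists. The key idea — and the only slightly non-routine step — is the observation that for $t$ even the problem decouples completely into the ``odd-lengths'' and ``even-lengths'' subproblems already solved in Propositions~\ref{prop:odd} and~\ref{prop:even}, with the relevant divisibility conditions holding automatically because the numbers of odd and of even values in $\{1,\ldots,m\}$ are multiples of the scaled multiplicity.
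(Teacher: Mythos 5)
Your proof is correct and follows exactly the paper's route: Proposition~\ref{prop:EvenNumber} for necessity, Proposition~\ref{prop:Skolem} with Remark~\ref{rem:F1+F2} when $\frac{n}{t}\equiv 0,1\pmod 4$, and the odd/even split via Propositions~\ref{prop:odd} and~\ref{prop:even} glued by Remark~\ref{rem:F1+F2} when $\frac{n}{t}\equiv 2,3\pmod 4$ and $t$ is even. You merely spell out the bookkeeping (the parameters $N_o$, $N_e$ and the check that $\ell=\ell'$) that the paper leaves implicit.
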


\begin{proof}
If $t$ is odd and $\frac{n}{t}\equiv 2,3\pmod 4$ the non-existence follows by Proposition \ref{prop:EvenNumber}.
If $\frac{n}{t}\equiv 0,1 \pmod 4$, we apply Remark \ref{rem:F1+F2} and Proposition \ref{prop:Skolem}.
If $\frac{n}{t}\equiv 2,3 \pmod 4$ and $t$ is even we apply Remark \ref{rem:F1+F2} and Propositions \ref{prop:odd} and \ref{prop:even}.
\end{proof}

\begin{ex}
Let $F$ and $F'$ be the matchings constructed in Examples \ref{ex:odd} and \ref{ex:even}, respectively.
Then, $F''=F\cup(F'+32)$,
namely
$$\begin{array}{rcl}
F''&=&\{\{0,7\},\{1,6\},\{2,5\},\{3,4\},\{8,15\},\{9,14\},\{10,13\},\{11,12\},\{16,23\},\{17,22\},\\
&&\{18,21\},\{19,20\},\{24,31\},\{25,30\},\{26,29\},\{27,28\}\}\cup\\
&&\{\{34,36\},\{33,37\},\{40,42\},\{39,43\},\{46,48\},\{45,49\},\{52,54\},\{51,55\},\\
&&\{32,38\},\{35,41\},\{44,50\},\{47,53\}\}
\end{array}$$
is a perfect matching of $K_{56}$ such that $\ell(F'')=\ell'(F'')=\{1^4,2^4,3^4,4^4,5^4,6^4,7^4\}$.
\end{ex}

We conclude this section considering some similar lists.

\begin{prop} \label{prop:4cases}
Let $n\geq 3$ be an odd integer. There exists a perfect matching $F$ of $K_{2n}$ such that:
\begin{itemize}
\item[(1)] $\ell(F)=\{1^2,3^2,\ldots, (n-2)^2,n\}$;
\item[(2)] $\ell(F)=\{2^2,4^2,\ldots, (n-1)^2,n\}$.
\end{itemize}
\end{prop}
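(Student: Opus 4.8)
The plan is to construct the two perfect matchings explicitly, modelling each one on the structures already built in Propositions \ref{prop:odd} and \ref{prop:even}, but with a ``defect'' that absorbs the single extra edge of length $n$. Write $v=2n$ and work with vertices modulo $2n$ throughout. For part (1), I would start from the matching built in Proposition \ref{prop:odd} with $t=2$, which realizes $\{1^2,3^2,\ldots,(n-2)^2\}$ together with a second copy of the largest odd length; the idea is to replace one of the two ``blocks'' of consecutive integers by a slightly modified block of odd size that produces the remaining lengths plus one edge of length $n$. Concretely, I expect to keep a matching $A=\{\{j,\,2n-1-j\}:j\in[0,\frac{n-1}{2}-1]\}$ on the interval $[0,n-1]$ (or a suitable sub-interval), giving the lengths $1,3,\ldots,n-2$ once, and then on the complementary interval of size $n$ use the ``fold'' $\{\{n+j,\,2n-1-j\}:\ldots\}$ together with the single diameter edge $\{u,u+n\}$ for the leftover vertex $u$, which has length exactly $n$ since $n$ is odd. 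A short parity/cardinality count (using that $n$ is odd, so each of the two intervals involved has an even number of vertices once the diameter edge is removed) shows all vertices are covered and the edge-length list is exactly $\{1^2,3^2,\ldots,(n-2)^2,n\}$.

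For part (2) the skeleton is the matching $F_{n,t}$-type construction from Proposition \ref{prop:even}, but since here $t$ is effectively odd ($t=2$ would give even multiplicities, so this is a genuinely new, ``odd-block'' situation analogous to the $t$-odd case of that proof), I would instead build it directly: take two folded blocks of the form $\{\{c-j,\,c+j\}:j\in[1,\tfrac{n-1}{2}]\}$ centred at $c=\tfrac{n}{2}$-type points to generate $2,4,\ldots,n-1$ twice, and join the two block-centres by an edge of length $n$. Again the verification is a disjointness check: the two folded blocks occupy two disjoint intervals of $n$ integers each (since $n$ is odd the centre is a half-integer, so the block has even size and tiles an interval cleanly), the centre-joining edge sits across the two intervals at distance $n$, and the lengths produced are precisely the even numbers $2,\ldots,n-1$ with multiplicity $2$, plus the length $n$ once. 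I would present both matchings in the closed ``interval of consecutive integers folded about its midpoint'' form so that $\ell(e)=\ell'(e)$ holds automatically for every edge and no reduction modulo $2n$ is needed except for the single length-$n$ edge.

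The main obstacle I anticipate is not any deep idea but the bookkeeping: making sure that after removing the vertices used by the length-$n$ edge, the remaining vertices really do split into intervals of \emph{even} cardinality so that the folded sub-blocks fit exactly, and that the largest length produced by a fold is $n-2$ (part 1) or $n-1$ (part 2) rather than accidentally $n$ or $n+1$. Getting the index ranges right — in particular handling the small cases $n=3,5$ where a block may degenerate to a single edge or be empty — is where the care is needed, and I would check $n=3$ explicitly as a sanity case. Everything else follows from the fact that a folded interval $\{c-j,c+j\}$ automatically produces each length in its range exactly once, so stacking two such folds doubles every length, and adjoining one diameter edge contributes the unique length $n$; no appeal to Skolem sequences or to the earlier existence theorems is required here, the constructions being wholly self-contained.
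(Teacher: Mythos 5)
Your constructions are essentially the paper's own: part (1) is the single fold $F=\{\{i,2n-1-i\}:i\in[0,n-1]\}$ (your outer matching $A$ together with the fold of the complementary block is exactly this, the outermost edge of the inner fold being the length-$n$ edge), and part (2) is precisely the paper's two even-folds about the centres $\tfrac{n-1}{2}$ and $\tfrac{3n-1}{2}$ joined by the diameter edge. Two index slips to repair when writing this out: in (1) the complementary block is $[\tfrac{n-1}{2},\tfrac{3n-1}{2}]$, which has $n+1$ vertices (not $n$) and folds as $\{\tfrac{n-1}{2}+j,\tfrac{3n-1}{2}-j\}$ with lengths $n,n-2,\dots,1$ — your formula $\{n+j,2n-1-j\}$ would instead give the \emph{even} lengths $n-1-2j$ and re-use vertices already covered by $A$; and in (2) each even-fold has an \emph{integer} centre which is omitted rather than a half-integer one, so each block covers only $n-1$ of its $n$ vertices, which is exactly why the two centres survive to be matched by the length-$n$ edge.
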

\begin{proof}
In the case (1) take $F=\{\{i,2n-1-i\}: i\in[0,n-1]\}$,
while in the case (2) take
$$F=\left\{\{i,n-1-i\}: i\in \left[0,\frac{n-3}{2}\right] \cup \left[n,\frac{3n-3}{2}\right]\right\} \cup
\left\{\left\{\frac{n-1}{2},\frac{3n-1}{2}\right\}\right\}.$$
One can check that, in both  cases, $F$ satisfies the required properties.
\end{proof}

\section{Conclusions and open problems}

The conditions presented in this paper lead us to believe that it is not possible
to find a ``nice'' statement for the seating couple problem in the even case, as done for the odd case,
namely to find a condition such as \eqref{PP} of Conjecture \ref{MPP}.
In fact, as we have seen, the necessary conditions given in Section \ref{sec:preliminary}
are rarely sufficient. Hence, it would be interesting to classify the lists for which it happens.
This is the case when the underlying set has length $1$ (Proposition \ref{prop:xn})
or consists of the consecutive integers $1,2,\ldots,x$,
each appearing in the list with the same multiplicity (Corollary \ref{1234}).
It is also the case described in Theorem~\ref{thm:xy}.

On the other hand, one could start considering the case where $n$ is an odd prime, as done in \cite{M}.
With this assumption, when the list does not contain $n$,  the necessary conditions of Theorem \ref{thm:1x} simply become those of Propositions \ref{prop:MPP} and \ref{prop:EvenNumber}. So, also in view of some computational results, we propose the following
conjecture which is clearly related to Theorem~\ref{thm:KSC}, where the elements of the list are all coprime with $2n$.
Here, we assume the stronger assumption that $n=p$ is an odd prime, but the list is allowed to contain 
also even integers.

\begin{conj}\label{conj:3}
Let $p$ be an odd prime and let $L$ be a list of $p$ positive integers less than $p$.
There exists a perfect matching $F$ of $K_{2p}$ such that $\ell(F) = L$ if and only if the number of even
integers in $L$ is even.
\end{conj}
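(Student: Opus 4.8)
The plan is as follows. The necessity of the condition is immediate from Proposition~\ref{prop:EvenNumber}: since $L$ consists of $p$ integers in $[1,p-1]$, the number of even entries of $L$ equals $\sum_{i\ge 1}a_{2i}$, which must be even. So the real content is the sufficiency, and the first step I would take is to restate it arithmetically. Writing $L=(d_1,\dots,d_p)$ and working in $\mathbb{Z}_{2p}$, every edge of length $d_i<p$ can be written \emph{uniquely} as $\{b_i,b_i+d_i\}$ for some $b_i\in\mathbb{Z}_{2p}$ (uniqueness because $2d_i\not\equiv 0\pmod{2p}$). Hence a perfect matching $F$ of $K_{2p}$ with $\ell(F)=L$ exists if and only if there are $b_1,\dots,b_p\in\mathbb{Z}_{2p}$ for which the $2p$ elements $b_1,\dots,b_p,b_1+d_1,\dots,b_p+d_p$ are pairwise distinct, i.e.\ form a permutation of $\mathbb{Z}_{2p}$.

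Next I would exploit that $p$ is prime through the Chinese Remainder isomorphism $\mathbb{Z}_{2p}\cong\mathbb{Z}_2\times\mathbb{Z}_p$. The $\mathbb{Z}_2$-coordinates of those $2p$ values are governed entirely by the parities of the $b_i$: an edge with $d_i$ odd contributes one endpoint of each parity, while an edge with $d_i$ even contributes two endpoints of the same parity (that of $b_i$). For the full list to hit each parity class exactly $p$ times, one needs exactly $k$ of the $2k$ even-length edges to have $b_i$ even and the other $k$ to have $b_i$ odd; this is possible precisely because the number of even entries is even (so the $\mathbb{Z}_2$-part re-proves necessity). Fixing such an assignment, and harmlessly taking $b_i$ even whenever $d_i$ is odd, the problem reduces to the $\mathbb{Z}_p$-coordinate: one must choose $\bar b_1,\dots,\bar b_p\in\mathbb{F}_p$ so that the $p$ affine forms landing in the even class are pairwise distinct, and likewise for the $p$ forms landing in the odd class; since only $p$ values are available, each list must then be a bijection onto $\mathbb{F}_p$.

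The third step is the Combinatorial Nullstellensatz over $\mathbb{F}_p$. Let $Q\in\mathbb{F}_p[\bar b_1,\dots,\bar b_p]$ be the product of all pairwise differences of the forms in the even class, times the analogous product over the odd class; then $\deg Q=2\binom{p}{2}=p(p-1)$, and a nonzero evaluation of $Q$ on $\mathbb{F}_p^{\,p}$ yields the desired $\bar b_i$. Since $\sum_{i=1}^p(p-1)=p(p-1)=\deg Q$ and $p-1<|\mathbb{F}_p|$, it would suffice to show that the coefficient of $\prod_{i=1}^p\bar b_i^{\,p-1}$ in $Q$ is nonzero in $\mathbb{F}_p$ (one checks readily that each $\bar b_i$ occurs in $Q$ to degree at least $p-1$, so this monomial is eligible). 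When $k=0$ all entries are coprime with $2p$, $Q$ collapses to $\prod_{i<j}(\bar b_i-\bar b_j)\cdot\prod_{i<j}\big((\bar b_i-\bar b_j)+(\bar d_i-\bar d_j)\big)$, a product of two Vandermonde-type factors, and the coefficient in question becomes the corresponding convolution over permutations of $\{0,1,\dots,p-1\}$; this is exactly the situation covered by Theorem~\ref{thm:KSC}.

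The main obstacle is precisely this coefficient computation once even lengths are present. When $d_i$ is even, the variable $\bar b_i$ enters through two forms $\bar b_i$ and $\bar b_i+\bar d_i$ lying in the \emph{same} class, so $Q$ is no longer a product of two plain Vandermondes but of two more intricate ``confluent'' determinants, and the clean cancellation available in the coprime case breaks down; extracting the coefficient of $\prod_i\bar b_i^{\,p-1}$ then becomes a genuinely delicate symmetric-function (permanent-like) evaluation modulo $p$ with no obvious closed form. An alternative, purely constructive route would be to absorb the $2k$ even lengths two at a time into explicit blocks — in the spirit of the constructions of Sections~\ref{sec:1x} and~\ref{sec:246} — and to fill the remaining vertices with an all-odd (hence coprime) sub-matching, but it is unclear how to do this uniformly for an arbitrary distribution of multiplicities. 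This difficulty is, we believe, the reason the statement is offered here only as a conjecture.
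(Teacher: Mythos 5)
This statement is presented in the paper as a conjecture: the authors offer no proof of the sufficiency, only computational evidence and the observation (in Section 5) that for $n=p$ prime and $n\notin L$ the necessary conditions of Propositions \ref{prop:MPP} and \ref{prop:EvenNumber} collapse to the single parity condition in the statement. Your treatment of necessity is correct and is essentially the paper's own: since every entry of $L$ lies in $[1,p-1]$, Proposition \ref{prop:EvenNumber} forces the number of even entries to be even, and your $\mathbb{Z}_2$-coordinate count re-derives the same fact. So on the only direction that is actually settled, you and the paper agree.

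For sufficiency you have not produced a proof, and you say so yourself: the Chinese-Remainder reduction and the Combinatorial Nullstellensatz setup are a sensible program extending the Karasev--Petrov/Kohen--Sadofschi Costa argument (which handles the case where all entries are coprime with $2p$, i.e.\ Theorem \ref{thm:KSC}), but the decisive step --- showing that the coefficient of $\prod_i \bar b_i^{\,p-1}$ in your polynomial $Q$ is nonzero in $\mathbb{F}_p$ once even lengths are present --- is exactly where the known argument breaks and is left unestablished. Two smaller caveats within the sketch: restricting $b_i$ to be even whenever $d_i$ is odd is a genuine restriction of the search space (each edge of odd length has a uniquely determined $b_i$, whose parity you do not get to choose), so ``harmless'' needs justification or should be dropped; and eligibility of the target monomial in the Nullstellensatz needs only that its total degree equal $\deg Q$ with each exponent below $p$, which holds, so the remark about individual degrees is unnecessary. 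In short: necessity is proved and matches the paper; sufficiency remains open, as the paper itself acknowledges by stating the result as Conjecture \ref{conj:3}.
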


\section*{Acknowledgements}
The second and the third author are partially supported by INdAM-GNSAGA.


\begin{thebibliography}{29}

\bibitem{A} N. Alon,
\emph{Combinatorial Nullstellensatz},
Combinatorics, Probability and Computing \textbf{8} (1999), 7--29.

\bibitem{BM} M. Buratti and F. Merola,
\emph{Dihedral Hamiltonian Cycle Systems of the Cocktail Party Graph},
J. Combin. Des. \textbf{21} (2013), 1--23.

\bibitem{KP} R.N. Karasev and F.V. Petrov,
\emph{Partitions of nonzero elements of a finite field into pairs},
Israel J. Math.  \textbf{192} (2012), 143--156.

\bibitem{KSC} D. Kohen and I. Sadofschi Costa,
\emph{On a generalization of the seating couples problem},
Discrete Math. \textbf{339} (2016), 3017--3019.

\bibitem{M} T.R. Mezei, \emph{Seating couples and Tic-Tac-Toe},
(Master's Thesis) E\"otv\"os Lor\'and University (2013).

\bibitem{OPPS} M.A. Ollis, A. Pasotti, M.A. Pellegrini and J.R. Schmitt,
\emph{Growable Realizations: a Powerful Approach to the Buratti-Horak-Rosa Conjecture},
Ars Math. Contemp. \textbf{22} (2022), \#P4.04.

\bibitem{PP} A. Pasotti and M.A. Pellegrini,
\emph{A generalization of the problem of Mariusz Meszka},
Graphs Combin. \textbf{32} (2016), 333--350.

\bibitem{PM} E. Preissmann and M. Mischler,
\emph{Seating couples around the King's table and a new characterization of prime numbers},
American Mathematical Monthly \textbf{116} (2009), 268--272.

\bibitem{R} A. Rosa,
\emph{On a problem of Mariusz Meszka},
Discrete Math. \textbf{338} (2015), 139--143.

\bibitem{SS} N. Shalaby,
\emph{Skolem and Langford Sequences},
Handbook of Combinatorial Designs, Second Edition,  C.J. Colbourn and J.H. Dinitz (Editors),
Chapman \& Hall/CRC, Boca Raton, FL, 2006, 612--616.

\bibitem{W} D. West,
Introduction to graph theory.
Prentice Hall, New Jersey (1996).

\end{thebibliography}
\end{document}